\title{Adversarial Network Optimization under Bandit Feedback: \\ Maximizing Utility in Non-Stationary Multi-Hop Networks}
\author{%
    Yan Dai~\thanks{ORC \& LIDS, MIT. Email: \texttt{yandai20@mit.edu}. Work done when Yan was an undergraduate student at IIIS, Tsinghua.}\and Longbo Huang~\thanks{IIIS, Tsinghua. Email: \texttt{longbohuang@tsinghua.edu.cn}.}
}
\date{}
\newtheorem{theorem}{Theorem}[section]
\newtheorem{lemma}[theorem]{Lemma}
\newtheorem{definition}[theorem]{Definition}
\newtheorem{remark}{Remark}
\newtheorem{assumption}{Assumption}
  \renewcommand{\bm}[1]{#1}%
\renewcommand{\tilde}{\widetilde}
\renewcommand{\hat}{\widehat}
\renewcommand{\O}{\operatorname{\mathcal O}}
\newcommand{\Otil}{\operatorname{\tilde{\mathcal O}}}
\newcommand{\E}{\operatornamewithlimits{\mathbb{E}}}
\newcommand{\mN}{{\mathcal N}}
\newcommand{\mL}{{\mathcal L}}
\newcommand{\SSAlg}{\texttt{NSO}\xspace}
\newcommand{\UMAlg}{\texttt{UMO}\textsuperscript{\texttt{2}}\xspace}
\newcommand{\OLOAlg}{\texttt{AdaPFOL}\xspace}
\newcommand{\BCOAlg}{\texttt{AdaBGD}\xspace}
\newcommand{\OLO}{\OLOAlg}
\newcommand{\BCO}{\BCOAlg}
\Crefname{ALC@line}{Line}{Lines}
\newcommand\scalemath[2]{\scalebox{#1}{\mbox{\ensuremath{\displaystyle #2}}}}
\begin{document}
\maketitle

\begin{abstract}
Stochastic Network Optimization (SNO) concerns scheduling in stochastic queueing systems. It has been widely studied in network theory.
Classical SNO algorithms require network conditions to be stationary with time, which fails to capture the non-stationary components in many real-world scenarios. Many existing algorithms also assume knowledge of network conditions before decision, which rules out applications where unpredictability presents.

Motivated by these issues, we consider Adversarial Network Optimization (ANO) under bandit feedback.
Specifically, we consider the task of \textit{i)} maximizing some unknown and time-varying utility function associated to scheduler's actions, where \textit{ii)} the underlying network is a non-stationary multi-hop one whose conditions change arbitrarily with time, and \textit{iii)} only bandit feedback (effect of actually deployed actions) is revealed after decisions.
Our proposed \UMAlg algorithm ensures network stability and also matches the utility maximization performance of any ``mildly varying'' reference policy up to a polynomially decaying gap.
To our knowledge, no previous ANO algorithm handled multi-hop networks or achieved utility guarantees under bandit feedback, whereas ours can do both.

Technically, our method builds upon a novel integration of online learning into Lyapunov analyses:
To handle complex inter-dependencies among queues in multi-hop networks, we propose meticulous techniques to balance online learning and Lyapunov arguments.
To tackle the learning obstacles due to potentially unbounded queue sizes, we design a new online linear optimization algorithm that automatically adapts to loss magnitudes.
To maximize utility, we propose a bandit convex optimization algorithm with novel queue-dependent learning rate scheduling that suites drastically varying queue lengths.
Our new insights in online learning can be of independent interest.
\end{abstract}

\section{Introduction}
Stochastic Network Optimization (SNO) studies the fundamental problem of resource allocation in a dynamic system to fulfill incoming demands, with extensive applications in real-world problems including communication networks \citep{srikant2013communication}, cloud computing \citep{maguluri2012stochastic}, and supply chains \citep{rahdar2018tri}.
There are many classical scheduling algorithms in this field enjoying performance guarantees in terms of throughput maximization \citep{tsibonis2003exploiting}, delay minimization \citep{neely2008order}, or utility maximization \citep{huang2011utility}.

Classical SNO models often assume that the {network conditions}, for example, the arrival and service rates to each queue or the capacities of data links, are stationary with respect to time. However,  many important network scenarios in practice face non-stationarity. For instance, in applications such as autonomous driving, parties in the communication networks can move rapidly \citep{ashjaei2021time}, causing the network conditions to vary from time to time. Even more, attacks such as Distributed Denial-of-Service (DDoS) or jamming can frequently happen in communication networks \citep{zou2016survey}, where arrival rates or link conditions are altered by some malicious adversary.

Moreover, we notice that existing works, even those allowing non-stationary network conditions, assumes {perfect knowledge} about the network conditions. For example, in the paper by \citet{liang2018minimizing}, the network condition is revealed at the beginning of each round, so the outcomes associated with each action can be accurately calculated, \textit{before} actually deciding and deploying the scheduler's action (see \Cref{sec:related work} for more discussions).
Nevertheless, this may again not be the case in practice.  In underwater wireless communication systems, for instance, the network conditions is unpredictable until the policy is actually executed \citep{khan2020channel}. In Internet of Things (IoT), device failures or sensor temperatures can change rapidly, resulting in highly unpredictable  traffic and channel patterns in the network \citep{gaddam2020detecting}.
Therefore, it is hard to estimate {counterfactual} outcomes of other actions (\textit{i.e.}, ``what will happen if we used a different action?'') even after deploying the action and obtaining more information about the network conditions, not to mention pre-decision evaluations.
In a nutshell, it is important and largely open to design network algorithms that are robust to time-dependent or adversarial conditions with post-decision feedback. 

Motivated by these two challenges, this paper considers optimizing an abstract utility function associated with the scheduler's action (the so-called utility maximization task \citep{neely2008fairness}) even when the network is non-stationary (which we call Adversarial Network Optimization, or ANO in short) and the feedback model is bandit style.
Specifically, in ANO, the network conditions and utility functions can unknowingly vary from time to time. Therefore, statistics of the past merely infers the current network condition, which breaks many traditional SNO techniques.
Moreover, under the bandit feedback model, the scheduler has no information about the current network condition before decision. Even worse, after making decisions, it also only receives feedback resulting from the chosen action -- not those ``counterfactual'' ones associated with other actions. 
More formally, if an action $a$ is associated with outcome $O_t(a)$ (for example, arrival and service rates) in round $t$. Then \textit{i)} the scheduler has to decide action $a_t$ without having any information about $O_t(a)$, and \textit{ii)} after playing $a_t$, it can only observe $O_t(a_t)$ but not those $O_t(a')$'s for $a'\neq a$.
Therefore, it is hard to evaluate the optimal action even in hindsight: Based on information collected in rounds $[1,t]$, one cannot accurately calculate which action had the most gain within rounds $[1,t]$.
Henceforth, in our model, one not only cannot predict the future, but also cannot fully interpolate the history.

In addition to the challenging ANO setup, bandit feedback model, and utility maximization task, we also allow the underlying network  to be \textit{multi-hop}, which means jobs can be forwarded between queues.
Despite these hardness, we succeeded in designing the utility maximization algorithm of \UMAlg, which achieves a strong performance guarantee in non-stationary multi-hop networks under bandit feedback. It not only ensures the network is stable over time, but also proves a polynomially decaying gap between our utility and any 
``mildly varying'' policy's (measured by the path length; see \Cref{thm:multi-hop utility main theorem}), similar to what can be achieved in perfect-knowledge SNO problems \citep{neely2008fairness}.

We now highlight several technical innovations in our algorithm. While our algorithm is based on the classical Lyapunov drift-plus-penalty (DPP) analysis, the adversarial network conditions breaks existing SNO arguments, which we tackle by designing online learning algorithms enjoying dynamic regret guarantees in adversarial environments.
However, due to the multi-hop topology, learning in different queues is correlated. Thus, it is highly non-trivial to decompose the problem into several online learning tasks. To this end, we develop meticulous analysis techniques that can jointly analyze the online learning algorithms and the utility maximization effects.
Moreover, the queue lengths can occasionally be large despite having a bounded expectation. Such a unique challenge is missing in online learning literature, which usually assumes the loss magnitudes are uniformly bounded by a constant.
Finally, yet another challenge is due to a combination of the multi-hop topology and the unbounded queue lengths, which makes the losses fed into online learning algorithms sometimes quite negative, a known issue for many online learning algorithms \citep{zheng2019equipping,dai2023refined}.
These two challenges make existing online learning algorithms unable to fulfill our purpose, and we propose a novel {Online Linear Optimization} algorithm (\OLOAlg; used for system stability) that adapts to drastically varying losses (proportional to queue lengths) and a new {Bandit Convex Optimization} algorithm (\BCOAlg; deployed for utility maximization) whose learning rates are carefully designed to take care of the time-dependent loss magnitudes and Lipschitzness.

\begin{table}[t]
\begin{minipage}{\textwidth}
    \caption{Comparison of Most Related Works}
    \label{table}
    \centering
    \begin{savenotes}
    \renewcommand{\arraystretch}{1.1}
    \resizebox{\textwidth}{!}{%
    \begin{tabular}{|c|c|c|c|c|c|}\hline
     & \textbf{Network} & \textbf{Arrival \&} & & & \\[-3pt]
     & \textbf{Conditions} & \textbf{Service} \footnotemark[1] \footnotetext{\footnotemark[1] \textbf{Arrival \& Service} and \textbf{Utility} columns stand for whether the arrival and service rates or the utility function associated with each feasible control action is known \textit{before decision-making}, respectively.} & \textbf{Topology} & \textbf{Objective} & \textbf{Utility} \footnotemark[1] \\\hline
    \citep{neely2008fairness} & {\color{red!70!black} Stochastic} & {\color{red!70!black} Known} & {\color{green!70!black} Multi-Hop} & {\color{green!70!black} Utility Maximization} & {\color{red!70!black} Known} \\\hline
    \citep{neely2010universal} & {\color{green!70!black} Adversarial} & {\color{red!70!black} Known} & {\color{green!70!black} Multi-Hop} & {\color{green!70!black} Utility Maximization} & {\color{red!70!black} Known} \\\hline
    \citep{liang2018minimizing} & {\color{green!70!black} Adversarial} & {\color{red!70!black} Known} & {\color{green!70!black} Multi-Hop} & {\color{red!70!black} Network Stability} & --- \\\hline
    \citep{liang2018network} & {\color{green!70!black} Adversarial} & {\color{red!70!black} Known} & {\color{green!70!black} Multi-Hop} & {\color{green!70!black} Utility Maximization} & {\color{red!70!black} Known} \\\hline
    \citep{yang2023learning} & {\color{green!70!black} Adversarial} & {\color{green!70!black} Unknown} & {\color{red!70!black} Single-Hop} & {\color{red!70!black} Network Stability} & --- \\\hline
    \citep{huang2023queue} & {\color{green!70!black} Adversarial} & {\color{green!70!black} Unknown} & {\color{red!70!black} Single-Hop} & {\color{red!70!black} Network Stability} & --- \\\hline
    \rowcolor{gray!40!white}\textbf{Ours} & {\color{green!70!black} Adversarial} & {\color{green!70!black} Unknown} & {\color{green!70!black} Multi-Hop} & {\color{green!70!black} Utility Maximization}  & {\color{green!70!black} Unknown}\\\hline
    \end{tabular}}
    \end{savenotes}
\end{minipage}
\end{table}

Finally, we mention some most related works including \citep{neely2010universal,liang2018minimizing,liang2018network,huang2023queue,yang2023learning} to help interpolate our position in the literature.
Among them, \citet{neely2010universal,liang2018minimizing,liang2018network} assumed perfect pre-decision knowledge on network conditions, which allows direct calculation of the arrival and service rates resulting from every action. In our case, we have to learn these outcomes in an online manner.
\citet{liang2018minimizing,huang2023queue,yang2023learning} focused on network stability, while our utility maximization task additionally requires maximizing an abstract, unknown, and time-varying utility function, thus adding difficulties in designing online learning algorithms.
\citet{neely2010universal,liang2018network} considered utility maximization, but their utility functions are fixed and non-adversarial. In our case, the utility functions are both time-varying and unknown, thus another online learning sub-routine is needed.
\citet{huang2023queue,yang2023learning} investigated single-hop networks where jobs leave the network upon being served, whereas our formulation considers the more general multi-hop networks.
We refer the readers to \Cref{table} and \Cref{sec:related work} for more information.

Our main contributions in this paper can be summarized as follows:
\begin{itemize}
\item We propose a novel algorithm \UMAlg (\Cref{alg:utility}) for adversarial multi-hop networks under bandit feedback, which gives rigorous utility optimization guarantee. To the best of our knowledge, no previous algorithm can handle multi-hop topology or achieve utility guarantees in adversarial networks under bandit feedback, whereas our \UMAlg algorithm is able to do both.
Moreover, as a by-product, we also derive a simpler algorithm \SSAlg (\Cref{alg:stability}) which ensures network stability for adversarial multi-hop networks under bandit feedback.
\item To handle the multi-hop topology which brings inter-queue correlations and to jointly handle online learning and network optimization, 
we develop a unified analysis that allows the integration of online learning techniques into the classical Lyapunov drift-plus-penalty arguments. 
Specifically, via the design of a new OLO algorithm to stabilize the network and a novel BCO algorithm to maximize the utility, \UMAlg algorithm enjoys a network stability guarantee together with a polynomially decaying gap between its utility and that of any policy that is ``mildly varying''
(in the sense that its path length is of order $o(\sqrt T)$; see \Cref{thm:multi-hop utility main theorem} for more details).
\item Due to the potentially unbounded queue lengths, existing online learning algorithms are unfortunately inapplicable. We design an OLO algorithm that can handle large losses and enjoys a performance guarantee adapted to the loss magnitudes (\OLOAlg; \Cref{thm:multi-hop stability decision making}). We also develop a new BCO method specially crafted for the drastically varying loss magnitudes and Lipschitzness (\BCOAlg; \Cref{thm:multi-hop utility decision making}). Both online learning algorithms can be of independent interest.
\end{itemize}

\subsection{Related Works}\label{sec:related work}
We discuss the most related works here. A more comprehensive literature review is in \Cref{sec:more related work}.

\textbf{Adversarial Network Control.}
Adversarial networks date back to the 1990s, when \citet{cruz1991calculus} gave the first adversarial dynamics network model and its scheduling algorithm. More efforts were made to allow more general arrival rates \citep{borodin2001adversarial,andrews2001universal}, link conditions \citep{andrews2004scheduling,andrews2007stability}, or both \citep{liang2018minimizing}. We also direct the readers to the references therein for more discussions.
The main focus of the aforementioned papers were usually system stability, whereas ours is utility maximization. As we are aware of, existing results on utility maximization \citep{neely2010universal,liang2018network} mostly assumed perfect knowledge on network conditions.

\textbf{Feedback Models.}
Most previous works considered perfect knowledge model which assumes pre-decision knowledge on network conditions \citep{liang2018minimizing,liang2018network}.
In contrast, our paper considers bandit feedback model which only reveals the consequence of our action. A small number of previous works \citep{fu2022joint,yang2023learning,huang2023queue} also assumed similar feedback models albeit under different names.
Another feedback model whose difficulty lies in between is full-information feedback model, which requires network conditions to be revealed after decision and thus counterfactual evaluations of all actions (\textit{i.e.}, not only the deployed one) are allowed in hindsight. See \citep{neely2012max} for an example.

\textbf{Adversarial Networks under Bandit Feedback.}
Prior to our work, \citet{huang2023queue,yang2023learning} also studied adversarial networks under bandit feedback. However, they both assumed single-hop networks and focused on network stability. In contrast, our paper allows a general multi-hop topology and tackles the utility maximization task of optimizing an abstract, unknown, and time-varying utility function.


\section{Notations and Preliminaries}\label{sec:preliminaries}
We use bold letters to denote vectors, \textit{e.g.}, $\bm q_t,\bm \mu_t,\bm \lambda_t$, and denote their elements with corresponding normal letters, \textit{e.g.}, $q_{t,i},\mu_{t,i},\lambda_{t,i}$.
For an integer $n\ge 0$, $[n]$ stands for $\{1,2,\ldots,n\}$. For a finite set $\mathcal S$, $\triangle(\mathcal S)$ is the simplex over $\mathcal S$, \textit{i.e.}, $\{\bm x\in \mathbb R^{\lvert \mathcal S\rvert}\mid \sum_{i=1}^{\lvert \mathcal S\rvert} x_i=1\}$, where every element $\bm x\in \triangle(\mathcal S)$ is a discrete probability distribution over $\mathcal S$.
We use $\O$ to hide all absolute constants, and use $\Otil$ to additionally hide all logarithmic factors.
For functions $f(T)$ and $g(T)$, we say $f(T)=\O_T(g(T))$ if $\limsup_{T\to \infty} \frac{f(T)}{g(T)}<\infty$ and $f(T)=o_T(g(T))$ if $\limsup_{T\to \infty} \frac{f(T)}{g(T)}=0$.

\subsection{Adversarial Network Optimization under Bandit Feedback Formulation}

We first introduce our adversarial network optimization with bandit feedback model. 
Specifically, in a network with multiple servers and directional data links, we denote the set of all servers by $\mN$ and that of all data links by $\mL\subseteq \mN\times \mN$. Suppose that $\lvert \mN\rvert$ and $\lvert \mL\rvert$ are both finite.
There are $\lvert \mN\rvert$ commodities of jobs such that those jobs belonging to commodity $k\in \mN$ are destined for server $k\in \mN$.
We denote $Q_n^{(k)}$ as the queue of unfinished commodity-$k$ jobs at server $n$, where $n\in \mN$ and $k\in \mN$. We assume the links do not interfere with each other. 

The scheduling problem lasts for $T>0$ rounds. In round $t\in [T]$, the scheduler makes two decisions: \textit{i)} \textit{arrival rates} of commodity-$k$ jobs into server $n$, $\forall n\in \mN,k\in \mN$, and \textit{ii)} \textit{link rate allocations} of transmitting how many commodity-$k$ jobs over data link $(n,m)$, $\forall k\in \mN,(n,m)\in \mL$. Both decisions are made under the bandit feedback model, \textit{i.e.}, the scheduler makes decisions in blind and only receives feedback resulting from its actions. 
Below, we describe them in detail. 

\textbf{Arrival Rates and Utility.} In every round $t$, the scheduler decides an $\lvert \mN\rvert\times \lvert \mN\rvert$ dimensional arrival rate matrix $\bm \lambda(t)$ from some fixed action set $\Lambda\subseteq \mathbb R_{\ge 0}^{\lvert \mN\rvert\times \lvert \mN\rvert}$, and consequently, $\lambda_n^{(k)}(t)$ jobs with commodity $k$ will be added to queue $Q_n^{(k)}$.
The arrival rate vector $\bm \lambda(t)$ is associated with some abstract utility $g_{t}(\bm \lambda(t))$ where $g_t\colon \Lambda\to \mathbb R$ is concave (that is, the user's marginal return diminishes gradually as the arrivals increase \citep{huang2011utility,huang2012lifo}), $L$-Lipschitz, and $[-G,G]$-bounded, where $L$ and $G$ here are known constants.
Following the adversarial network assumption, we allow $g_t$'s to be time-dependent (though they have to be pre-determined, which is called the \textit{oblivious adversary} model).
Following the bandit feedback model, the scheduler has no information about $g_t$ before the decision, and can only observe $g_t(\bm \lambda(t))$ for the chosen $\bm \lambda(t)$ but not the whole $g_t$ after decision. 

\textbf{Link Rate Allocations.} 
The capacity of each link $(n,m)\in \mL$ can be time-varying. We denote the capacity of $(n,m)$ in round $t$ as $C_{n,m}(t)$. We assume the capacities are always bounded by some finite constant $M$.
Due to the bandit feedback model, the scheduler cannot access $C_{n,m}(t)$ when deciding. Nevertheless, the scheduler can still decide a \textit{link allocation} plan which assigns a distribution over commodities on each link, or formally denoted as $\bm a_{n,m}(t)\in \triangle(\mN)$ (the $|\mN|$-dimension  distribution simplex, representing the portion of rates allocated to each commodity over the link).
Via sending jobs from each commodity along link $(n,m)$ according to distribution $\bm a_{n,m}(t)$ in a round-robin manner, approximately $a_{n,m}^{(k)}(t) C_{n,m}(t)$ jobs from queue $Q_n^{(k)}$ will be sent along link $(n,m)$ to queue $Q_m^{(k)}$.
Formally, we assume that after deciding link allocation plans $\{\bm a_{n,m}(t)\in \triangle(\mN)\}_{(n,m)\in \mL}$, the number of jobs successfully sent from $Q_n^{(k)}$ to $Q_m^{(k)}$, denoted by $\mu_{n,m}^{(k)}(t)$, are independently generated such that $\E[\mu_{n,m}^{(k)}(t)]=C_{n,m}(t) a_{n,m}^{(k)}(t)$ and $\mu_{n,m}^{(k)}(t)\in [0,M]$.
Again, we assume a bandit fededback model, which means the scheduler is able to observe $C_{n,m}(t)$ and $\bm \mu_{n,m}(t)$ for all $(n,m)\in \mL$ only after the decision is made at the end of round $t$.

Putting the two components together, by denoting the length of $Q_n^{(k)}$ at the beginning of round $t$ to be $Q_n^{(k)}(t)$, the network dynamics can then be characterized as follows:
\begin{equation}\label{eq:multi-hop stability dynamics}
Q_n^{(k)}(t+1)=\begin{cases}
\left [Q_n^{(k)}(t)-\sum_{(n,m)\in \mL}\mu_{n,m}^{(k)}(t)\right ]_+ + \sum_{(o,n)\in \mL} \mu_{o,n}^{(k)}(t) + \lambda_{n}^{(k)}(t),&k\ne n\\
0,&k=n
\end{cases},
\end{equation}
where $\lambda_n^{(k)}(t)$ is the number of jobs with commodity $k$ that the scheduler adds to server $n$, and $\mu_{n,m}^{(k)}(t)$ is the number of jobs with commodity $k$ transmitted along data link $(n,m)$.

The objective of the scheduler is to maximize its average utility over the $T$ rounds, namely $\frac 1T \E[\sum_{t=1}^T g_t(\bm \lambda(t))]$.
However, a scheduling algorithm is meaningless if it cannot ensure network stability, which requires the average number of jobs remaining in the network is non-divergent when the number of rounds is large enough.
Formally, the network stability requirement says
\begin{equation}
\frac 1T \E\left [\sum_{t=1}^T \lVert \bm Q(t)\rVert_1\right ]=\frac 1T\E\left [\sum_{t=1}^T \sum_{n\in \mN} \sum_{k\in \mN} Q_{n}^{(k)}(t)\right ]=\O_T(1),\quad \text{when }T\gg 0.\label{eq:average queue}
\end{equation}
The scheduler aims to maximize its average utility subject to the network stability condition, \textit{i.e.},
\begin{equation}\label{eq:average reward}
\text{Maximize }\frac 1T \E\left [\sum_{t=1}^T g_t(\bm \lambda(t))\right ]\text{ s.t. \Cref{eq:average queue} holds}.
\end{equation}

\subsection{Technical Overview of Our Paper}
In order to improve presentation and facilitate understanding, in \Cref{sec:multi-hop stability}, we first present the network stability algorithm \SSAlg, \textit{i.e.}, pretending $g_t$ is a constant. This algorithm will serve as a key building block for the utility maximization algorithm \UMAlg that we introduce in \Cref{sec:multi-hop utility}.

In \Cref{fig:flowchart}, we give an overview of our main technical steps when analyzing \SSAlg and \UMAlg. The steps for \SSAlg are in yellow, the ones for \UMAlg are in blue, and those in common are in green.
In general, either analysis starts from the famous Lyapunov drift(-plus-penalty) analysis \citep[\S 4]{neely2010stochastic}, which reveals the non-negativity of a Lyapunov drift(-plus-penalty) function -- see \Cref{sec:multi-hop stability Lyapunov} and \Cref{sec:multi-hop utility Lyapunov} for more details.
We then use online learning techniques to minimize them (\textit{i.e.}, making them as close to zero as possible). From here, the analyses for \SSAlg and \UMAlg become different.

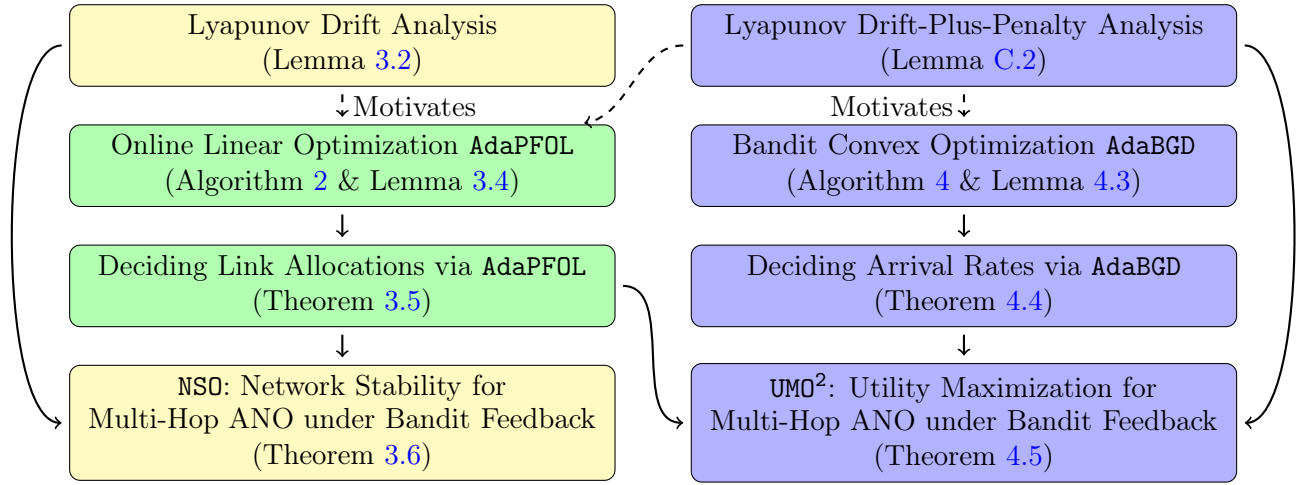
\begin{figure}[t!]
\centering
\begin{tikzpicture}[block/.style={rounded corners, minimum width=7.25cm, draw, node distance=0.5cm and 1cm}]
\node[block, fill=yellow!30!white] (3) {\shortstack{Lyapunov Drift Analysis\\(\Cref{lem:multi-hop stability Lyapunov})}};
\node[block, below=of 3, fill=green!30!white] (5) {\shortstack{Online Linear Optimization \OLOAlg\\(\Cref{alg:OLO unbounded} \& \Cref{lem:OLO unbounded guarantee})}};
\node[block, below=of 5, fill=green!30!white] (7) {\shortstack{Deciding Link Allocations via \OLOAlg\\(\Cref{thm:multi-hop stability decision making})}};
\node[block, below=of 7, fill=yellow!30!white] (9) {\shortstack{\SSAlg: Network Stability for\\Multi-Hop ANO under Bandit Feedback\\(\Cref{thm:multi-hop stability main theorem})}};
\node[block, right=of 3, fill=blue!30!white] (4) {\shortstack{Lyapunov Drift-Plus-Penalty Analysis\\(\Cref{lem:multi-hop utility Lyapunov})}};
\node[block, right=of 5, fill=blue!30!white] (6) {\shortstack{Bandit Convex Optimization \BCOAlg\\(\Cref{alg:ABGD} \& \Cref{lem:ABGD guarantee})}};
\node[block, right=of 7, fill=blue!30!white] (8) {\shortstack{Deciding Arrival Rates via \BCOAlg\\(\Cref{thm:multi-hop utility decision making})}};
\node[block, right=of 9, fill=blue!30!white] (10) {\shortstack{\UMAlg: Utility Maximization for\\Multi-Hop ANO under Bandit Feedback\\(\Cref{thm:multi-hop utility main theorem})}}; 
\begin{scope}[->, shorten >=1mm, shorten <=1mm, thick]
\draw [dashed] (3) to[out=270, in=90, edge node={node [right] {Motivates}}] (5);
\draw (3) to[out=180,in=180, looseness=0.5] (9);
\draw (5) to[out=270,in=90] (7);
\draw (7) to[out=270,in=90] (9);
\draw (4) to[out=0,in=0, looseness=0.5] (10);
\draw [dashed] (4) to[out=180, in=10] (5);
\draw [dashed] (4) to[out=270, in=90, edge node={node [left] {Motivates}}] (6);
\draw (6) to[out=270,in=90] (8);
\draw (7) to[out=0,in=180] (10);
\draw (8) to[out=270,in=90] (10);
\end{scope}
\end{tikzpicture}
\caption{Technical Overview of \SSAlg and \UMAlg Frameworks}\label{fig:flowchart}
\end{figure}

For the network stability algorithm \SSAlg, we succeeded in expressing the Lyapunov drift function as a function linear in the queue lengths $\bm Q(t)$ and the link allocation plan $\bm a(t)$.
While this belongs to the classical Online Linear Optimization (OLO) problem in online learning \citep{zinkevich2003online}, we face two unique challenges due to the potentially unbounded queue lengths and the self-bounding analysis for network stability guarantees; see \Cref{sec:multi-hop stability sketch} for more discussions.
These two requirements rules out existing OLO algorithms, and thus we have to design our own algorithm crafted towards the network optimization objective. Specifically, we designed an OLO algorithm \texttt{AdaPFOL} (see \Cref{alg:OLO unbounded}) that can handle occasionally large loss magnitudes and ensures a performance guarantee depending on all the losses.
When plugging it into \SSAlg, we are able to see that the link allocation plans perform well, as detailed in \Cref{thm:multi-hop stability decision making}. Therefore, combining it with some reference policy assumption (\Cref{def:multi-hop stability network stability}) and the Lyapunov drift analysis, we obtain the network stability guarantee of \SSAlg in \Cref{thm:multi-hop stability main theorem}.
A more detailed overview of \SSAlg is in \Cref{sec:multi-hop stability sketch}.

Regarding the utility maximization algorithm \UMAlg, we have to decompose the Lyapunov drift-plus-penalty function into two parts. The first part is still linear in $\bm Q(t)$ and $\bm a(t)$, which we can  reuse the \OLOAlg algorithm. For the second part, as the utility function $g_t$ is an arbitrary time-varying concave function and we only receive bandit feedback (recall \Cref{table}), OLO cannot capture it. Instead, we model this part as a Bandit Convex Optimization (BCO) problem \citep{flaxman2005online}. Unfortunately, again due to the potentially unbounded queue lengths and the self-bounding analysis, the loss functions' magnitudes and Lipschitzness can be very large for some rounds but we still want to adapt to them -- thus, existing BCO algorithms are inapplicable either.
To this end, we develop a BCO algorithm \BCOAlg (\Cref{alg:ABGD}) which allows loss functions with large magnitudes or Lipschitzness and enjoys a performance adaptive to the loss functions. When plugging in \BCOAlg to the \UMAlg framework, it can generate a good arrival rate sequence as we analyze in \Cref{thm:multi-hop utility decision making}.
Therefore, similar to the analysis of \SSAlg in \Cref{thm:multi-hop stability main theorem}, if we combine the \OLOAlg in \Cref{alg:OLO unbounded} and the \BCOAlg in \Cref{alg:ABGD} together, we are able to derive the utility maximization guarantee of \UMAlg in \Cref{thm:multi-hop utility main theorem}. Again, a more detailed overview of \UMAlg can be found in \Cref{sec:multi-hop utility sketch}.

\section{Network Stability in Adversarial Multi-Hop Networks}\label{sec:multi-hop stability}

In this section, we do a first step towards our ultimate goal of multi-hop utility maximization, which is network stability (recall that \Cref{eq:average reward} requires \Cref{eq:average queue} as a condition).
That is, this section only focuses on stablizing the average number of tasks in the system $\frac 1T \E[\sum_{t=1}^T \lVert \bm Q(t)\rVert_1]$ and does not consider utilities.
The algorithm designed for this purpose (\SSAlg in \Cref{alg:stability}) will serve as the network stability component of our utility maximization algorithm (\UMAlg in \Cref{alg:utility}).

One may observe that if we only want to ensure the network stability condition in \Cref{eq:average queue}, it suffices to pick all arrival rate vectors $\bm \lambda(t)\equiv 0$.
To avoid such trivial algorithms, we assume the arrival rates are adversarially chosen for now. That is, $\bm \lambda(t)\in [0,R]^{\lvert \mN\rvert\times \lvert \mN\rvert}$ is some arbitrary, unknown, and time-varying vector following the oblivious adversary model. It is only revealed post-decision, at the end of round $t$.
The rationale of assuming adversarial $\bm \lambda(t)$ is because in the \UMAlg algorithm for utility maximiztion, another algorithmic component decides $\bm \lambda(t)$ and the network stability component that we design in this section must adapt to such an arbitrary arrival rate matrix $\bm \lambda(t)$.

\begin{algorithm}[t!]
\caption{\SSAlg: \textbf{N}etwork \textbf{S}tability via Online Linear \textbf{O}ptimization}
\label{alg:stability}
\begin{algorithmic}[1]
\REQUIRE{Number of rounds $T$, set of servers $\mN$ and links $\mL$, maximum capacity $M$, feasible arrival rates $\Lambda$ (adversarial arrival rates given during execution -- only assumed in this section). An online linear optimization algorithm $\OLO$ (\Cref{alg:OLO unbounded}).}
\STATE For each link $(n,m)\in \mL$, initialize an instance of $\OLO$ with action set $\triangle(\mN)$ as $\OLO_{n,m}$.
\FOR{$t=1,2,\ldots,T$}
\STATE For each link $(n,m)\in \mL$, pass the maximum loss magnitude for this round $M\lVert \bm Q_m(t)-\bm Q_n(t)\rVert_\infty$ to $\OLO_{n,m}$. Pick link allocation $\bm a_{n,m}(t)\in \triangle(\mN)$ as the output of $\OLO_{n,m}$.
\STATE Observe arrival rates $\bm \lambda(t)\in \Lambda$. \COMMENT{In the utility maximization algorithm \UMAlg (\Cref{alg:utility}), this step will be replaced by another algorithmic component.}
\STATE Observe capacities $\{C_{n,m}(t)\}_{(n,m)\in \mL}$ and actual data transmissions $\{\mu_{n,m}^{(k)}(t)\}_{(n,m)\in \mL,k\in \mN}$.
\STATE Calculate queue lengths $\bm Q(t+1)$ from $\bm Q(t)$ according to \Cref{eq:multi-hop stability dynamics}.
\STATE For each link $(n,m)\in \mL$, pass the loss vector $C_{n,m}(t)(\bm Q_m(t)-\bm Q_n(t))$ to $\OLO_{n,m}$.
\ENDFOR
\end{algorithmic}
\end{algorithm}

\subsection{Motivation of Our Algorithmic Framework}\label{sec:multi-hop stability sketch}

In \Cref{alg:stability}, we present \textbf{N}etwork \textbf{S}tability via Online Linear \textbf{O}ptimization (\SSAlg), an algorithmic framework which achieves stability in adversarial multi-hop networks under bandit feedback.
One key ingredient of \SSAlg is the plug-in \textit{Online Linear Optimization} (OLO) algorithm $\OLO$.
Before going into details of the \OLOAlg algorithm, we first introduce why we need it.

The design of \SSAlg is based on the famous Lyapunov drift analysis \citep[\S 4]{neely2010stochastic}. Conducting standard Lyapunov analysis on the network dynamics defined in \Cref{eq:multi-hop stability dynamics}, we are able to derive
\begin{align}
&\quad -\frac 12 N^2 ((NM)^2+2(NM)^2+2R^2) T \nonumber\\
&\le \E\left [\sum_{t=1}^T \sum_{n\in \mN} \sum_{k\in \mN} Q_{n}^{(k)}(t) \left (\sum_{(o,n)\in \mL} \mu_{o,n}^{(k)}(t) + \lambda_n^{(k)}(t) - \sum_{(n,m)\in \mL} \mu_{n,m}^{(k)}(t) \right )\right ],\label{eq:multi-hop stability Lyapunov informal}
\end{align}
whose formal statement and proof can be found in \Cref{lem:multi-hop stability Lyapunov}.

Based on this inequality, a Lyapunov drift based algorithm can be constructed by  minimizing the RHS of \Cref{eq:multi-hop stability Lyapunov informal} \citep[\S 4]{neely2010stochastic}.  
As the arrival rate $\lambda$'s is regarded as a constant in this section, we may only focus on the terms related to $\mu$. Thus, minimizing RHS of \Cref{eq:multi-hop stability Lyapunov informal} is equivalent to
\begin{align}
\text{Minimizing }&\quad \E\left [\sum_{t=1}^T \sum_{(n,m)\in \mL} \sum_{k=1}^K \mu_{n,m}^{(k)}(t) (Q_m^{(k)}(t)-Q_n^{(k)}(t))\right ] \nonumber\\
&=\E\left [\sum_{t=1}^T \sum_{(n,m)\in \mL} \langle \bm Q_m(t)-\bm Q_n(t),\bm \mu_{n,m}(t)\rangle\right ]\nonumber\\
&=\E\left [\sum_{t=1}^T \sum_{(n,m)\in \mL} \langle C_{n,m}(t) (\bm Q_m(t)-\bm Q_n(t)),\bm a_{n,m}(t)\rangle\right ], \label{eq:multi-hop stability OLO objective informal}
\end{align}
where the last step uses the assumption that $\E[\bm \mu_{n,m}(t)]=\E[C_{n,m} \bm a_{n,m}(t)]$.

For illustration purposes, let us focus on a single data link $(n,m)\in \mL$. Motivated by \citet{huang2023queue}, we consider designing a scheduling algorithm via minimizing the following expectation:
\begin{equation*}
\E\left [\sum_{t=1}^T \langle C_{n,m}(t) (\bm Q_m(t)-\bm Q_n(t)),\bm a_{n,m}(t)\rangle\right ].
\end{equation*}

\begin{remark}
While having similarities, this objective is different from that of \citet{huang2023queue} in two aspects:

First, the network topology in \citep{huang2023queue} is a single-server, single-hop one, thus it suffices to conduct the Lyapunov drift optimization on the centralized server. In contrast, due to our multi-hop topology, our optimization task \Cref{eq:multi-hop stability OLO objective informal} has to be distributed onto every data link $(n,m)\in \mL$ and extra efforts are needed to ensure a good overall scheduling effect.

Second, the coefficient $C_{n,m}(t) (Q_m^{(k)}(t)-Q_n^{(k)}(t))$ before $a_{n,m}^{(k)}(t)$ can be either positive or negative, whereas that of \citet{huang2023queue} is always non-negative. Such a negativity also increases the difficulty as many online learning algorithms are typically bad at handling potentially negative losses, see, \textit{e.g.}, \citep{zheng2019equipping,dai2023refined}.
\end{remark}

Recall that $\bm a_{n,m}(t)\in \triangle(\mN)$ can be any probability distribution from the simplex. Hence, one may view $\triangle(\mN)$ as the action set in round $t$.
Moreover, for an action $\bm a$ from the action set $\triangle(\mN)$, picking it in round $t$ will incur a loss $\langle C_{n,m}(t) (\bm Q_m(t)-\bm Q_n(t)),\bm a\rangle$ -- which is linear in $\bm a$. Thus, this problem belongs to the class of Online Linear Optimization (OLO) problems \citep{zinkevich2003online,mcmahan2010adaptive,duchi2011adaptive}, whose formal definition will be presented later as \Cref{def:OLO}.
Although our problem belongs to the OLO formulation, we face significantly different challenges due to our network optimization context:
\begin{enumerate}
\item[\textit{i)}] In our task of minimizing $\E[\sum_{t=1}^T \langle C_{n,m}(t) (\bm Q_m(t)-\bm Q_n(t)),\bm a_{n,m}(t)\rangle]$, the magnitude of the loss $C_{n,m}(t) (\bm Q_m(t) - \bm Q_n(t))$ can occasionally be large because $\bm Q_n(t)$ or $\bm Q_m(t)$ may be unbounded.
Note that, despite our system stability condition \Cref{eq:average queue} requires $\frac 1T \E[\sum_{t=1}^T \lVert \bm Q(t)\rVert_1]$ to be small, some $\bm Q(t)$'s inside average and expectation are still allowed to be large.
However, existing algorithms in OLO mostly require the losses to be uniformly bounded by a constant (see, \textit{e.g.}, \citep{mcmahan2010adaptive,cutkosky2020parameter}), which means extra efforts should be made to handle occasionally large losses.
\item[\textit{ii)}] Moreover, we also want our performance to depend on the geometric mean of all loss magnitudes (which in turn relates to queue lengths since the losses $C_{n,m}(t) (\bm Q_m(t) - \bm Q_n(t))$ depend on $\lVert \bm Q(t)\rVert$). On a high level, this can be understood as follows: The average queue length $\frac 1T\E[\sum_{t=1}^T \lVert \bm Q(t)\rVert_1]$ can be controlled by the OLO performance via some other arguments (see \Cref{sec:multi-hop stability assumption}). Thus, if we can additionally show that OLO performance is bounded by queue lengths, we can conduct a self-bounding analysis on the queue lengths which informally reads
\begin{align}
&\quad \E\left [\sum_{t=1}^T \lVert \bm Q(t)\rVert_1\right ]\lesssim \text{Online Learning Performance} \lesssim \O_T(T) + o_T(T^{1/4}) \E\left [\sum_{t=1}^T \lVert \bm Q(t)\rVert_1\right ]^{3/4} \nonumber \\
&\Longrightarrow \frac 1T \E\left [\sum_{t=1}^T \lVert \bm Q(t)\rVert_1\right ] = \O_T(1),\textit{ i.e.},\text{ the system is stabilized}.\label{eq:self-bounding informal}
\end{align}
More details regarding the self-bounding analysis can be found in \Cref{sec:multi-hop stability main theorem}. Nevertheless, it suffices to remember that a performance depending on all loss magnitudes is beneficial.
\end{enumerate}

In \Cref{sec:multi-hop stability OLO}, we introduce our novel OLO algorithm of \OLOAlg (\Cref{alg:OLO unbounded}) that enjoys these two properties.
Equipped with such an algorithm, we can minimize \Cref{eq:multi-hop stability OLO objective informal} and achieve network stability guarantee by deploying it on every link $(n,m)\in \mL$ for $T$ rounds with action set $\triangle(\mN)$. This idea of deploying \OLOAlg onto every link exactly gives our \SSAlg framework in \Cref{alg:stability}.

Therefore, we are able to analyze the network stability effect when the \SSAlg framework is equipped with \OLOAlg in \Cref{alg:OLO unbounded}, which we do in the rest of this section:
We introduce our reference policy assumptions in \Cref{sec:multi-hop stability assumption}, conduct the Lyapunov drift analysis in \Cref{sec:multi-hop stability Lyapunov}, introduct and analyze the novel \OLOAlg algorithm in \Cref{sec:multi-hop stability OLO}, and present our final analysis in \Cref{sec:multi-hop stability main theorem}.

\subsection{Reference Policy Assumption}\label{sec:multi-hop stability assumption}
We first make the following \textit{multi-hop piecewise stability} assumption, which, informally speaking, assumes that there exists a reference policy that stabilizes the system piecewisely.
It is an extension of the piecewise stability assumption \citep[Assumption 1]{huang2023queue} to multi-hop cases.
\begin{assumption}[Multi-Hop Piecewise Stability for Network Stability]\label{def:multi-hop stability network stability}
There exists a reference action sequence $\{\mathring{\bm a}(t)\}_{t\in [T]}$ (where $\mathring{\bm a}(t)=\{\mathring{\bm a}_{n,m}(t)\in \triangle(\mN)\}_{(n,m)\in \mL}$, in analogue to the scheduler's action sequence), such that
there are some constants $C_W\ge 0$, $\epsilon_W\ge 0$ and a partition $W_1,W_2,\ldots,W_J$ of $[T]$,\footnote{A partition $W_1,W_2,\ldots,W_N$ of $[T]$ is a collection of a few non-intersecting intervals whose union is $[T]$} which ensure that $\sum_{j=1}^J (\lvert W_j\rvert-1)^2\le C_W T$ and
\begin{align}
&\frac{1}{\lvert W_j\rvert} \sum_{t\in W_j} \sum_{(n,m)\in \mL} C_{n,m}(t) \mathring a_{n,m}^{(k)}(t) \ge \epsilon_W + \frac{1}{\lvert W_j\rvert} \sum_{t\in W_j} \left (\lambda_n^{(k)}(t) + \sum_{(o,n)\in \mL} C_{o,n}(t) \mathring a_{o,n}^{(k)}(t)\right ), \nonumber\\
&\quad \forall j\in [J],n\in \mN,k\in \mN,\label{eq:multi-hop stability network stability}
\end{align}
where $\lambda_n^{(k)}(t)$ is the obliviously decided arrival rates that we assume in this section.
\end{assumption}

Intuitively, \Cref{def:multi-hop stability network stability} means that there exists some ``good'' action sequence $\{\mathring{\bm a}(t)\}_{t\in [T]}$ making the network stable, in the sense that there are multiple windows $W_1,W_2,\ldots,W_J$ such that in expectation, for each window $W_j$ and for each queue $Q_n^{(k)}$, the average service rate it receives (whose expectation is $\sum_{(n,m)\in \mL} C_{n,m}(t) \mathring a_{n,m}^{(k)}(t)$ in round $t$), is strictly more than its net arrival rate (which includes both external data flows $\lambda_n^{(k)}(t)$ and internal data flows that are forwarded from other queues $\sum_{(o,n)\in \mL} C_{o,n}(t) \mathring{a}_{o,n}^{(k)}(t)$), by a constant gap of at least $\epsilon_W$.

\begin{remark}\label{remark:assumption}
Such assumptions are typical in network optimization literature. In the case when the network is stationary, \Cref{def:multi-hop stability network stability} recovers the classical capacity region assumption in SNO  \citep{neely2010stochastic}.
However, extending this condition to adversarial network is highly non-trivial.
For adversarial networks, an alternative assumption is the $(W,\epsilon)$-constrained dynamics assumption \citep{liang2018minimizing}, which roughly says \Cref{eq:multi-hop stability network stability} holds for every window of size $W$. \Cref{def:multi-hop stability network stability} thus allows more flexibility.
Finally, our \Cref{def:multi-hop stability network stability} can be viewed as a generalization of the piecewise stability assumption \citep{huang2023queue}, which was crafted for a single centralized server.
\end{remark}

Before moving on, we shall remark that the reference action sequence $\{\mathring{\bm a}(t)\}_{t\in [T]}$ in \Cref{def:multi-hop stability network stability} is \textit{unknown} to the scheduler. Instead, the scheduler needs to learn its own way of stabilizing the network via observations. 
To characterize the ability of $\{\mathring{\bm a}(t)\}_{t\in [T]}$ in stabilizing the network, the following lemma controls the average queue length resulting from any scheduling policy.
\begin{lemma}[Ability of $\{\mathring{\bm a}(t)\}_{t\in [T]}$ in Stabilizing the Network]\label{lem:multi-hop stability network stability}
If $\{\mathring{\bm a}(t)\}_{t\in [T]}$ satisfies \Cref{def:multi-hop stability network stability}, then for any scheduler-generated queue lengths $\{\bm Q(t)\}_{t\in [T]}$,
\begin{align*}
&\quad \epsilon_W \E\left [\sum_{t=1}^T \sum_{n\in \mN} \sum_{k\in \mN} Q_n^{(k)}(t)\right ] - (N^2 (2NM+R)^2 + \epsilon_W N^2 (2NM+R)) C_W T \\
&\le-\E\left [\sum_{t=1}^T \sum_{(n,m)\in \mL} \sum_{k\in \mN} C_{n,m}(t) \mathring{a}_{n,m}^{(k)}(t) (Q_m^{(k)}(t) - Q_n^{(k)}(t))\right ]-\E\left [\sum_{t=1}^T \sum_{n\in \mN} \sum_{k\in \mN} Q_n^{(k)}(t) \lambda_n^{(k)}(t)\right ].
\end{align*}
\end{lemma}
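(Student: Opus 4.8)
The plan is to rewrite the right-hand side of the claimed bound as a single weighted sum of queue lengths, and then apply the windowed drift condition of \Cref{def:multi-hop stability network stability} one window at a time, charging the within-window fluctuation of the queues against the $C_W T$ error budget.

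First I would do a bookkeeping rearrangement. For a fixed round $t$ and commodity $k$, expanding the link sum and collecting the coefficient of each $Q_n^{(k)}(t)$ (which amounts to reindexing the term $C_{n,m}(t)\mathring a_{n,m}^{(k)}(t)Q_m^{(k)}(t)$ by the head node of the link) shows that the RHS of the lemma equals $\E\bigl[\sum_{t=1}^T\sum_{n\in\mN}\sum_{k\in\mN} Q_n^{(k)}(t)\,D_n^{(k)}(t)\bigr]$, where
\[
D_n^{(k)}(t):=\sum_{(n,m)\in\mL}C_{n,m}(t)\mathring a_{n,m}^{(k)}(t)-\sum_{(o,n)\in\mL}C_{o,n}(t)\mathring a_{o,n}^{(k)}(t)-\lambda_n^{(k)}(t)
\]
is the net drain rate of queue $Q_n^{(k)}$ under the reference policy in round $t$. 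In this notation, \Cref{def:multi-hop stability network stability} is precisely the statement that $\sum_{t\in W_j}D_n^{(k)}(t)\ge\epsilon_W\lvert W_j\rvert$ for every window $W_j$ and all $n,k$. Moreover, since the $C$'s, $\mathring a$'s, and $\lambda$'s are oblivious, each $D_n^{(k)}(t)$ is deterministic, so it suffices to prove the inequality pathwise and take expectations only at the end.

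The core is a per-window estimate. From the dynamics \Cref{eq:multi-hop stability dynamics}, the crude bounds $\lvert Q_n^{(k)}(t+1)-Q_n^{(k)}(t)\rvert\le 2NM+R$ and $\lvert D_n^{(k)}(t)\rvert\le 2NM+R$ hold (from $\mu\in[0,M]$, $\lambda\in[0,R]$, at most $N$ links into or out of each node, and each $\bm a_{n,m}(t)$ being a distribution over commodities). Fixing a window $W_j$ with left endpoint $t_j$, I would (i) freeze $Q_n^{(k)}(t)$ at $Q_n^{(k)}(t_j)$ inside $\sum_{t\in W_j}Q_n^{(k)}(t)D_n^{(k)}(t)$, at a cost of at most $(2NM+R)^2\sum_{t\in W_j}\lvert t-t_j\rvert\le(2NM+R)^2(\lvert W_j\rvert-1)^2$, using $\sum_{t\in W_j}\lvert t-t_j\rvert\le\tfrac12\lvert W_j\rvert(\lvert W_j\rvert-1)\le(\lvert W_j\rvert-1)^2$; (ii) invoke $Q_n^{(k)}(t_j)\ge 0$ and the window condition to get $Q_n^{(k)}(t_j)\sum_{t\in W_j}D_n^{(k)}(t)\ge\epsilon_W\lvert W_j\rvert\,Q_n^{(k)}(t_j)$; and (iii) convert back via $\lvert W_j\rvert\,Q_n^{(k)}(t_j)\ge\sum_{t\in W_j}Q_n^{(k)}(t)-(2NM+R)(\lvert W_j\rvert-1)^2$, again by the one-step bound. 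Chaining (i)--(iii) gives $\sum_{t\in W_j}Q_n^{(k)}(t)D_n^{(k)}(t)\ge\epsilon_W\sum_{t\in W_j}Q_n^{(k)}(t)-\bigl(\epsilon_W(2NM+R)+(2NM+R)^2\bigr)(\lvert W_j\rvert-1)^2$. Summing over $j\in[J]$, over the $N^2$ pairs $(n,k)$, applying $\sum_j(\lvert W_j\rvert-1)^2\le C_W T$, and taking expectations produces exactly the stated bound with constant $\bigl(N^2(2NM+R)^2+\epsilon_W N^2(2NM+R)\bigr)C_W T$.

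The step I expect to be delicate is the within-window accounting: the queue length must be frozen at a window \emph{endpoint} so that $\sum_{t\in W_j}\lvert t-t_j\rvert$ is controlled by $(\lvert W_j\rvert-1)^2$, and the two approximation errors — freezing $Q_n^{(k)}$ inside the product in (i), and converting $\lvert W_j\rvert Q_n^{(k)}(t_j)$ back to $\sum_{t\in W_j}Q_n^{(k)}(t)$ in (iii) — must be charged so that their total matches the $\bigl(\epsilon_W(2NM+R)+(2NM+R)^2\bigr)\sum_j(\lvert W_j\rvert-1)^2$ budget that \Cref{def:multi-hop stability network stability} caps at a constant times $C_W T$. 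Everything else reduces to the initial rearrangement, non-negativity of queue lengths, and the uniform one-step change bound.
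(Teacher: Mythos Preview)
Your proposal is correct and follows essentially the same route as the paper's proof: freeze the queue lengths at the left endpoint of each window, apply the windowed drift condition from \Cref{def:multi-hop stability network stability} to the frozen queues, and charge the freezing/unfreezing errors against the $(\lvert W_j\rvert-1)^2$ budget via the one-step increment bound $\lvert Q_n^{(k)}(t+1)-Q_n^{(k)}(t)\rvert\le 2NM+R$. The only cosmetic differences are that you name the per-queue net drain $D_n^{(k)}(t)$ explicitly and observe its determinism up front, whereas the paper keeps the sums expanded throughout; the accounting for the two error terms and the final constant $N^2(2NM+R)^2+\epsilon_W N^2(2NM+R)$ is identical.
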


\Cref{lem:multi-hop stability network stability} says the Lyapunov drift (defined later) under $\{\mathring{\bm a}(t)\}_{t=1}^T$ is always negative, which is useful when analyzing queue-based policies \citep[\S 3.1]{neely2010stochastic}. Its proof can be found in \Cref{sec:multi-hop stability network stability appendix}.

\subsection{Lyapunov Drift Analysis}\label{sec:multi-hop stability Lyapunov}
We carry out our analysis based on the Lyapunov drift analysis \citep[\S 4]{neely2010stochastic}, which considers the Lyapunov function $L_t$ and its drift $\Delta(\bm Q(t))$, defined as follows:
\begin{equation*}
\text{Lyapunov function }L_t=\frac 12 \sum_{n\in \mN} \sum_{k\in \mN} \left (Q_{n}^{(k)}\right )^2,\quad \text{Lyapunov drift } \Delta(\bm Q(t)) = \E[L_{t+1} - L_t \mid \bm Q(t)].
\end{equation*}

We give the following result which is almost diretly applying the classical Lyapunov drift analysis to the queue dynamics in \Cref{eq:multi-hop stability dynamics}. The proof is standard and thus deferred to \Cref{sec:multi-hop stability Lyapunov appendix}.
\begin{lemma}[Lyapunov Drift Analysis]\label{lem:multi-hop stability Lyapunov}
Under the queue dynamics of \Cref{eq:multi-hop stability dynamics},
\begin{align}
0\le \E\left [\sum_{t=1}^T \Delta(\bm Q(t))\right ]&\le \E\left [\sum_{t=1}^T \sum_{(n,m)\in \mL} \sum_{k\in \mN} \mu_{n,m}^{(k)}(t)\left (Q_m^{(k)}(t)-Q_n^{(k)}(t)\right ) + \sum_{n\in \mN} \sum_{k\in \mN} Q_n^{(k)}(t) \lambda_n^{(k)}(t)\right ] + \nonumber \\
&\quad \frac 12 N^2 ((NM)^2+2(NM)^2+2R^2) T. \label{eq:multi-hop stability Lyapunov}
\end{align}
\end{lemma}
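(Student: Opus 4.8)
The plan is a textbook per-queue Lyapunov drift expansion followed by telescoping; essentially all of it is bookkeeping. For the lower bound $0\le \E[\sum_{t=1}^T \Delta(\bm Q(t))]$, I would apply the tower rule to get $\E[\sum_{t=1}^T \Delta(\bm Q(t))] = \sum_{t=1}^T(\E[L_{t+1}]-\E[L_t]) = \E[L_{T+1}]-\E[L_1]$, and then observe that the network starts empty so $L_1=0$, while $L_{T+1}\ge 0$ as a sum of squares. That settles the left inequality.

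For the upper bound I would fix a round $t$ and a queue $Q_n^{(k)}$ with $k\ne n$, abbreviate $q:=Q_n^{(k)}(t)$, write the total departure as $D:=\sum_{(n,m)\in\mL}\mu_{n,m}^{(k)}(t)$ and the total incoming flow as $A:=\lambda_n^{(k)}(t)+\sum_{(o,n)\in\mL}\mu_{o,n}^{(k)}(t)$, so that \Cref{eq:multi-hop stability dynamics} reads $Q_n^{(k)}(t+1)=[\,q-D\,]_+ + A$. Using $[\,q-D\,]_+^2\le (q-D)^2$ together with $0\le [\,q-D\,]_+\le q$ (valid since $q,D\ge 0$) and expanding the square gives
\[
\bigl(Q_n^{(k)}(t+1)\bigr)^2 \;\le\; q^2 + D^2 + A^2 - 2qD + 2qA .
\]
Halving, summing over all $n,k\in\mN$ (the self-destination queues $Q_n^{(n)}$ are identically zero by \Cref{eq:multi-hop stability dynamics}, so they contribute nothing), and recalling $L_t=\tfrac12\sum_{n,k}(Q_n^{(k)})^2$ yields a bound on $L_{t+1}-L_t$ made of (i) quadratic noise terms $\tfrac12(D^2+A^2)$, (ii) departure terms $-qD$, and (iii) incoming-flow terms $qA$.

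To reach the stated right-hand side I would then do two things. First, bound the noise: each link transmits at most $M$ jobs per round and at most $N:=|\mN|$ links are incident to any node, so $D\le NM$ and $\sum_{(o,n)\in\mL}\mu_{o,n}^{(k)}(t)\le NM$, while $\lambda_n^{(k)}(t)\le R$; hence $D^2\le (NM)^2$ and $A^2\le (NM+R)^2\le 2(NM)^2+2R^2$, and summing these constants over the at most $N^2$ pairs $(n,k)$ and over $t\in[T]$ produces exactly $\tfrac12 N^2\bigl((NM)^2+2(NM)^2+2R^2\bigr)T$. Second, fuse the linear terms: the incoming-flow contribution $\sum_{n,k} qA$ splits into the external part $\sum_{n,k} Q_n^{(k)}(t)\lambda_n^{(k)}(t)$, already in target form, and the forwarding part $\sum_{n,k} Q_n^{(k)}(t)\sum_{(o,n)\in\mL}\mu_{o,n}^{(k)}(t)$; relabeling the edge $(o,n)$ as $(n,m)$ rewrites the latter as $\sum_k\sum_{(n,m)\in\mL}\mu_{n,m}^{(k)}(t)\,Q_m^{(k)}(t)$, which combines with the departure term $-\sum_k\sum_{(n,m)\in\mL}\mu_{n,m}^{(k)}(t)\,Q_n^{(k)}(t)$ into $\sum_{(n,m)\in\mL}\sum_k \mu_{n,m}^{(k)}(t)\bigl(Q_m^{(k)}(t)-Q_n^{(k)}(t)\bigr)$. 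Taking $\E[\,\cdot\mid\bm Q(t)\,]$ turns the left side into $\Delta(\bm Q(t))$, and summing over $t\in[T]$ followed by a total expectation completes the upper bound.

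I do not expect a genuine obstacle here: this lemma is the routine entry point of the Lyapunov method. The only steps that need a moment's care are the cross-term bound $[\,q-D\,]_+\le q$, the observation that the $k=n$ queues vanish, and the edge-relabeling that merges the departure and forwarding sums into a single difference $Q_m^{(k)}(t)-Q_n^{(k)}(t)$. All the real difficulty of the paper is downstream — namely, driving this right-hand side toward zero under bandit feedback via the online-learning subroutine \OLOAlg.
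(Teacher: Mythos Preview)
Your proposal is correct and follows essentially the same route as the paper: a per-queue square expansion using $[q-D]_+^2\le(q-D)^2$ and $[q-D]_+\le q$, the same noise bound $D^2+A^2\le (NM)^2+2(NM)^2+2R^2$ summed over $N^2$ queues, the same edge-relabeling to merge departures and forwarded arrivals into the difference $Q_m^{(k)}-Q_n^{(k)}$, and telescoping with $L_1=0$, $L_{T+1}\ge 0$ for the lower bound. Your explicit handling of the $k=n$ self-destination queues is a nice touch the paper leaves implicit.
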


As sketched in \Cref{sec:multi-hop stability sketch}, our algorithm is designed to approximately  minimize the RHS of \Cref{eq:multi-hop stability Lyapunov} via online learning, which contains two non-constant terms $\E[\sum_{(n,m)\in \mL}\langle \bm \mu_{n,m}(t),\bm Q_m(t)-\bm Q_n(t)\rangle]$ and $\E[\sum_{n\in \mN} \langle \bm Q_n(t), \bm \lambda_n(t)\rangle]$.
As $\lambda_n^{(k)}(t)$ are obliviously chosen, the second term is also constant. Therefore, it remains to minimize the following term: 
\begin{equation}\label{eq:multi-hop OLO objective}
\E\left [\sum_{t=1}^T \langle \bm \mu_{n,m}(t),\bm Q_m(t)-\bm Q_n(t)\rangle\right ]=\E\left [\sum_{t=1}^T \langle C_{n,m}(t) (\bm Q_m(t)-\bm Q_n(t)), \bm a_{n,m}(t)\rangle\right ],\quad \forall (n,m)\in \mL.
\end{equation}

For each data link $(n,m)\in \mL$, \Cref{eq:multi-hop OLO objective} corresponds to an {Online Linear Optimization} (OLO) problem with $ \langle C_{n,m}(t) (\bm Q_m(t)-\bm Q_n(t)), \bm a_{n,m}(t)\rangle$ being the loss for round $t$.
In the next section, we first rigorously define the OLO problem in \Cref{def:OLO} and then present our novel algorithm that tackles the unique challenges we face in network optimization contexts -- potentially large losses due to unbounded queue lengths (recall \Cref{eq:multi-hop OLO objective}), and adapting to all the loss magnitudes because we want to conduct a self-bounding analysis on the queue lengths (recall \Cref{eq:self-bounding informal}).

\subsection{\OLOAlg: Learning for Network Stability}\label{sec:multi-hop stability OLO}
In this section, we will have a small detour to the OLO problem mentioned in \Cref{sec:multi-hop stability sketch}. 
We first rigorously define the OLO problem \citep{zinkevich2003online,mcmahan2010adaptive,duchi2011adaptive} in \Cref{def:OLO}. Then, we present the construction our novel OLO algorithm of \OLOAlg (\Cref{alg:OLO unbounded}).
Finally, we prove that plugging it into the \SSAlg framework in \Cref{alg:stability} indeed ensures good optimization effect of \Cref{eq:multi-hop OLO objective}.
\begin{definition}[Online Linear Optimization]\label{def:OLO}
Consider a $T$-round game. Every round $t\in [T]$, the player selects an action $\bm x_t$ from a convex set $\mathcal X\subseteq \mathbb R^d$. The environment simultaneously decides a loss vector $\bm g_t\in \mathbb R^d$ such that the loss of the player for round $t$ is $\langle \bm x_t,\bm g_t\rangle$. The player will observe the whole vector of $\bm g_t$ (\textit{i.e.}, full-information feedback is available, instead of the more restrictive bandit feedback model). Dynamic regret minimization in OLO considers minimizing
\begin{equation*}
\text{D-Regret}_T^{\text{OLO}}(\mathring{\bm x}_1,\mathring{\bm x}_2,\ldots,\mathring{\bm x}_T)=\sum_{t=1}^T \langle \bm g_t,\bm x_t-\bm x_t^\circ\rangle,\quad \forall \mathring{\bm x}_1,\mathring{\bm x}_2,\ldots,\mathring{\bm x}_T\in \mathcal X.
\end{equation*}
\end{definition}

Before moving on, we recall the two challenges \textit{i)} and \textit{ii)} in \Cref{sec:multi-hop stability sketch}: Due to potentially unbounded queue lengths, \OLOAlg must resist from large and negative losses. Meanwhile, as we want to conduct the self-bounding analysis in \Cref{eq:self-bounding informal}, $\OLO$ shall additionally enjoy a performance guarantee ($\text{D-Regret}_T^{\text{OLO}}$ in \Cref{def:OLO}) depending on the geometric mean of all the loss magnitudes.
Thus, our ideal algorithm for \Cref{def:OLO} must satisfy the following:
\begin{enumerate}
    \item[\textit{i)}] it can resist against occasionally large loss magnitudes, \textit{i.e.}, $\sup_t \lVert \bm g_t\rVert_\infty$ can be large, and 
    \item[\textit{ii)}] it enjoys a performance guarantee depending on all the loss magnitudes, \textit{e.g.}, $\sqrt{\sum_{t=1}^T \lVert \bm g_t\rVert_\infty^2}$.
\end{enumerate}

\begin{algorithm}[t!]
\caption{\OLOAlg: \textbf{Ada}ptive \textbf{P}amameter-\textbf{F}ree \textbf{O}nline \textbf{L}earning}
\label{alg:OLO unbounded}
\begin{algorithmic}[1]
\REQUIRE{Action set $\mathcal X$. For each round $t$, the maximum loss magnitude $G_t$ will be reveled at the beginning, and a loss vector $\bm g_t$ satisfying $\lVert \bm g_t\rVert_\infty \le G_t$ will be given in the end.}
\STATE Set $G\gets 1$. Initialize an instance $\mathcal A$ of the algorithm in \Cref{lem:OLO guarantee} with action set $\mathcal X$.
\FOR{$t=1,2,\ldots$}
\STATE Observe the maximum loss magnitude $G_t>0$ for this round.
\IF{$G_t>G$}
\STATE Set $G=2G_t$. Reset $\mathcal A$ as a new instance of the algorithm in \Cref{lem:OLO guarantee} with action set $\mathcal X$.
\ENDIF
\STATE Output the ouptut of $\mathcal A$. Observe loss vector $\bm g_t$ (such that $\lVert \bm g_t\rVert_\infty\le G_t$). Feed $G^{-1}\bm g_t$ to $\mathcal A$.
\ENDFOR
\end{algorithmic}
\end{algorithm}

To design such an algorithm, we build upon the Parameter-Free Online Learning (\texttt{PFOL}) algorithm by \citet{cutkosky2020parameter}. It ensures condition \textit{ii)} by enjoying $\text{D-Regret}_T^{\text{OLO}}(\mathring{\bm x}_1,\mathring{\bm x}_2,\ldots,\mathring{\bm x}_T)\propto \sqrt{\sum_{t=1}^T \lVert \bm g_t\rVert_\infty^2}$ \citep[Theorem 6]{cutkosky2020parameter}, but fails to bare large loss magnitudes as it requires $\lVert \bm g_t\rVert_\infty\le 1$ for all $t\in [T]$. 

Fortunately, as $C_{n,m}(t)\in [0,M]$, we know $\lVert \bm g_t\rVert_\infty \le M \lVert \bm Q_m(t) - \bm Q_n(t)\rVert_\infty$. Even better, $\bm Q(t)$ can be calculated at the beginning of round $t$ -- before deciding $\bm a(t)$.
Utilizing this knowledge, we are able to design our OLO algorithm which enjoys both property \textit{i)} and \textit{ii)}.
We call this algorithm \OLOAlg (\textbf{Ada}ptive \textbf{P}amameter-\textbf{F}ree \textbf{O}nline \textbf{L}earning), whose pseudo-code is presented in \Cref{alg:OLO unbounded}. \OLOAlg deploys a doubling technique to the \texttt{PFOL} algorithm of \citet{cutkosky2020parameter}, which restarts every time observing a large $G_t$. We can show that this only introduces a logarithmic overhead as the original \texttt{PFOL} algorithm also enjoys \textit{ii)}.

\OLOAlg algorithm enjoys the following dynamic regret guarantee, satisfying both \textit{i)} and \textit{ii)}:
\begin{lemma}[Guarantee of \OLOAlg Algorithm]\label{lem:OLO unbounded guarantee}
Consider the OLO problem in \Cref{def:OLO}. Let the action set $\mathcal X$ has diameter $D=\sup_{\bm x,\bm y\in \mathcal X}\lVert \bm x-\bm y\rVert_1$. Suppose that $\lVert \bm g_t\rVert_\infty\le G_t$, where $G_t$ is some $\mathcal F_{t-1}$-measurable random variable and $(\mathcal F_t)_{t=0}^T$ is the natural filtration, \textit{i.e.}, $\mathcal F_t$ is the $\sigma$-algebra generated by all random observations made during the first $t$ rounds.
Then, \OLOAlg (\Cref{alg:OLO unbounded}) ensures that for any comparator sequence $\mathring{\bm x}_1,\mathring{\bm x}_2,\ldots,\mathring{\bm x}_T\in \mathcal X$, if $\max_{t\in [T]} G_t\ge 1$, then
\begin{equation*}
\text{D-Regret}_T^{\text{OLO}}(\mathring{\bm x}_1,\mathring{\bm x}_2,\ldots,\mathring{\bm x}_T)=\O\left (\sqrt{D \left (D+\sum_{t=1}^{T-1} \lVert \mathring{\bm x}_t-\mathring{\bm x}_{t+1}\rVert_1\right )}\sqrt{\sum_{t=1}^T \lVert \bm g_t\rVert_\infty^2}\log T \log \left (\max_{t=1}^T G_t\right )\right ).
\end{equation*}
\end{lemma}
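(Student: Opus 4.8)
The plan is to analyze \OLOAlg (\Cref{alg:OLO unbounded}) \emph{epoch by epoch}, where an epoch is a maximal block of consecutive rounds during which the internal rescaling variable $G$ stays constant. On each epoch the algorithm simply runs a fresh copy $\mathcal A$ of the base learner of \Cref{lem:OLO guarantee} on the rescaled losses $\bm g_t/G$, so I would reduce the whole argument to four steps: (i) bounding the number of epochs $R$; (ii) checking that the rescaled losses handed to $\mathcal A$ have $\ell_\infty$-norm at most $1$, which is exactly the input condition \Cref{lem:OLO guarantee} requires; (iii) invoking \Cref{lem:OLO guarantee} on each epoch and noticing that the scale factor cancels; and (iv) stitching the per-epoch regret bounds together with Cauchy--Schwarz.

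For step (i): $G$ is initialized to $1$, and every reset replaces $G$ by $2G_t$ for a round-$t$ value $G_t>G$, so each reset \emph{more than} doubles $G$; since $G$ never exceeds $2\max_{t\in[T]}G_t$ and $\max_t G_t\ge 1$ by hypothesis, the number of resets, hence $R$, is $\O(1+\log(\max_t G_t))$. For step (ii): take any epoch with constant value $G$. If the epoch began with a reset at its first round $t_e$, then $G=2G_{t_e}$ and every later round $s$ of the epoch satisfies $G_s\le G$ (otherwise a reset would have ended the epoch earlier); if instead it is the initial epoch with $G=1$, then every round $s$ in it has $G_s\le 1=G$. Either way $\lVert\bm g_s\rVert_\infty\le G_s\le G$, so $\lVert\bm g_s/G\rVert_\infty\le 1$ as needed. (Using $2G_t$ rather than $G_t$ only adds a harmless factor of $2$.)

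For step (iii): fix an epoch covering a round-interval $I_e$ with scale $G^{(e)}$, and write $P_e=\sum_{t,\,t+1\in I_e}\lVert\mathring{\bm x}_t-\mathring{\bm x}_{t+1}\rVert_1$ for the comparator's path length \emph{within} that epoch. Applying \Cref{lem:OLO guarantee} to the fresh instance run on $I_e$ with losses $\bm g_t/G^{(e)}$ and comparator $(\mathring{\bm x}_t)_{t\in I_e}$ gives
\[\sum_{t\in I_e}\bigl\langle \bm g_t/G^{(e)},\ \bm x_t-\mathring{\bm x}_t\bigr\rangle=\O\!\left(\sqrt{D(D+P_e)}\,\sqrt{\sum_{t\in I_e}\lVert\bm g_t/G^{(e)}\rVert_\infty^2}\,\log\lvert I_e\rvert\right);\]
multiplying both sides by $G^{(e)}>0$ cancels the scale factor and leaves $\sum_{t\in I_e}\langle\bm g_t,\bm x_t-\mathring{\bm x}_t\rangle=\O(\sqrt{D(D+P_e)}\,\sqrt{\sum_{t\in I_e}\lVert\bm g_t\rVert_\infty^2}\,\log T)$ after bounding $\log\lvert I_e\rvert\le\log T$. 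This cancellation is the crux of the construction: rescaling is precisely what makes \Cref{lem:OLO guarantee} applicable despite possibly huge losses, and because the base regret is \emph{linear} in the loss magnitude, rescaling back reinstates $\sqrt{\sum_{t\in I_e}\lVert\bm g_t\rVert_\infty^2}$ with no leftover dependence on $G^{(e)}$. Then, for step (iv), I would sum over the $R$ epochs and apply Cauchy--Schwarz with $a_e=D(D+P_e)$ and $b_e=\sum_{t\in I_e}\lVert\bm g_t\rVert_\infty^2$: since $\sum_e b_e=\sum_{t=1}^T\lVert\bm g_t\rVert_\infty^2$ and, dropping the cross-epoch jumps, $\sum_e P_e\le P:=\sum_{t=1}^{T-1}\lVert\mathring{\bm x}_t-\mathring{\bm x}_{t+1}\rVert_1$, one gets $\sum_e a_e\le R\,D(D+P)$, hence $\text{D-Regret}_T^{\text{OLO}}=\O(\sqrt{R}\,\sqrt{D(D+P)}\,\sqrt{\sum_t\lVert\bm g_t\rVert_\infty^2}\,\log T)$; substituting $\sqrt{R}=\O(1+\log(\max_t G_t))$ yields the stated bound (which is in fact slightly loose, since $\sqrt{R}=\O(\sqrt{1+\log(\max_t G_t)})$).

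The step I expect to be the conceptual heart --- rather than any single hard calculation --- is recognizing why the naive doubling does \emph{not} damage the two properties the lemma must deliver: a restart could a priori ``forget'' accumulated progress and destroy adaptivity to $\sqrt{\sum_t\lVert\bm g_t\rVert_\infty^2}$, while resets driven by a large $G_t$ could in principle blow up the regret multiplicatively in $\max_t G_t$. The resolution is the interplay of three facts used above: the base bound is \emph{exactly} linear in the per-round loss scale (so pre- and post-rescaling are lossless up to the epoch split), $G$ grows geometrically (so $R$ is only logarithmic), and Cauchy--Schwarz recombines $\ell_2$-type quantities across a partition without loss. A minor point I would record for later use: since each $G_t$ is $\mathcal F_{t-1}$-measurable it is available at the start of round $t$ exactly as \Cref{alg:OLO unbounded} demands, and the entire bound above is pathwise, so it holds for every realization of the adapted sequences $(G_t)$ and $(\bm g_t)$ --- which is what lets the lemma feed into \Cref{thm:multi-hop stability decision making}.
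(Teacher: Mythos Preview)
Your proposal is correct and follows essentially the same epoch-by-epoch argument as the paper: bound the number of resets by $\O(\log\max_t G_t)$, verify $\lVert \bm g_t/G\rVert_\infty\le 1$ within each epoch, apply \Cref{lem:OLO guarantee} to the rescaled losses, and multiply back by $G$. The only difference is in the final aggregation: the paper crudely bounds each per-epoch term by the global $\sqrt{D(D+P)}\sqrt{\sum_t\lVert\bm g_t\rVert_\infty^2}$ and sums over $R$ epochs to pick up a factor of $R=\O(\log\max_t G_t)$, whereas your Cauchy--Schwarz step yields the sharper $\sqrt{R}$ factor --- as you yourself note, this only makes the stated bound slightly loose rather than changing the argument.
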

\begin{remark}\label{remark:dynamic regret}
Note that in general, it is impossible to guarantee $\text{D-Regret}_T^{\text{OLO}}(\mathring{\bm x}_1,\mathring{\bm x}_2,\ldots,\mathring{\bm x}_T)=o_T(T)$ simultaneously for all $\{\mathring{\bm x}_t\in \mathcal X\}_{t\in [T]}$ \citep{zinkevich2003online}.
Therefore, many dynamic regret bounds, including ours,  depend on the notion of {path length} $P_T=\sum_{t=1}^{T-1} \lVert \mathring{\bm x}_t-\mathring{\bm x}_{t+1}\rVert$. Although the path length is linear in $T$ in the worst case, $\text{D-Regret}_T^{\text{OLO}}(\mathring{\bm x}_1,\mathring{\bm x}_2,\ldots,\mathring{\bm x}_T)=o_T(T)$ can still be ensured in cases where $P_T=o_T(T)$.
\end{remark}

The proof of \Cref{lem:OLO unbounded guarantee} will be presented in \Cref{sec:appendix cutkosky}.
It can be seen that \OLOAlg indeed satisfies both \textit{i)} and \textit{ii)}: It allows the loss magnitudes $\lVert \bm g_t\rVert_\infty$ to be large, and also enjoys a magnitude-aware dynamic regret guarantee of $\text{D-Regret}_T^{\text{OLO}}(\mathring{\bm x}_1,\mathring{\bm x}_2,\ldots,\mathring{\bm x}_T)\propto \sqrt{\sum_{t=1}^T \lVert \bm g_t\rVert_\infty^2}$.

Therefore, if we deploy \OLOAlg (\Cref{alg:OLO unbounded}) to decide $\bm a_{n,m}(t)$ on each link $(n,m)\in \mL$ as we do in \Cref{alg:stability}, the RHS of \Cref{eq:multi-hop stability Lyapunov} can consequently be minimized in the sense that it is close to that induced by the reference actions $\{\mathring{\bm a}(t)\}_{t\in [T]}$. Formally, we give the following theorem:
\begin{theorem}[Deciding $\bm a(t)$ via \OLOAlg Algorithm]\label{thm:multi-hop stability decision making}
For each link $(n,m)\in \mL$, as we did in \SSAlg, we execute an instance of \OLOAlg (\Cref{alg:OLO unbounded}) where $\mathcal X=\triangle(\mN)$, $\bm g_t=C_{n,m}(t) (\bm Q_m(t) - \bm Q_n(t))$, and $G_t=M\lVert \bm Q_m(t)-\bm Q_n(t)\rVert_\infty$. We make their outputs $\bm x_t$ as $\bm a_{n,m}(t)$ for every round $t$. Let $\mu_{n,m}^{(k)}(t)$ be the number of actually transmitted jobs from $Q_n^{(k)}(t)$ to $Q_m^{(k)}(t+1)$ induced by $a_{n,m}^{(k)}(t)$.

Consider an arbitrary reference action sequence $\{\mathring{\bm a} (t)\}_{t\in [T]}$ satisfying \Cref{def:multi-hop stability network stability}. Let $\mathring{\mu}_{n,m}^{(k)} (t)=C_{n,m}(t) \mathring{a}_{n,m}^{(k)} (t)\in [0,M]$ (as $C_{n,m}(t)\in [0,M]$ and $\mathring a_{n,m}^{(k)}(t)\in [0,1]$). Then
\begin{align*}
&\quad \E\left [\sum_{t=1}^T \sum_{(n,m)\in \mL} \sum_{k\in \mN} (\mu_{n,m}^{(k)}(t)-\mathring{\mu}_{n,m}^{(k)}(t))\left (Q_m^{(k)}(t)-Q_n^{(k)}(t)\right )\right ]\\
&=\O\left (M\sqrt{1+P_T^a} \E\left [\sqrt{\sum_{t=1}^T \lVert \bm Q(t)\rVert_2^2} \log T \log \left (\max_{t=1}^T \max_{(n,m)\in \mL} M \lVert \bm Q_m(t) - \bm Q_n(t)\rVert_\infty\right )\right ]\right )\text{,}
\end{align*}
where $P_T^a\triangleq \sum_{t=1}^{T-1} \sum_{(n,m)\in \mL} \lVert \mathring{\bm a}_{n,m}(t) - \mathring{\bm a}_{n,m}(t+1)\rVert_1$ is the path length of $\{\mathring{\bm a}(t)\}_{t=1}^T$.
\end{theorem}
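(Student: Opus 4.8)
I would (i) replace each random realized transmission $\mu_{n,m}^{(k)}(t)$ by its conditional mean, turning the left-hand side into a sum over links $(n,m)\in\mL$ of the dynamic regrets of the per-link \OLOAlg instances; (ii) invoke \Cref{lem:OLO unbounded guarantee} on each link separately; and (iii) recombine the per-link estimates into the stated global bound, taking the outer expectation only at the very end.

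\textbf{Step 1 (de-randomization).} Let $(\mathcal F_t)_{t\ge0}$ be the natural filtration. Since $\bm Q(t)$ is obtained from $\bm Q(t-1)$ and the round-$(t-1)$ observations through \Cref{eq:multi-hop stability dynamics}, it is $\mathcal F_{t-1}$-measurable; hence so is $G_t=M\lVert\bm Q_m(t)-\bm Q_n(t)\rVert_\infty$, and therefore so is the output $\bm a_{n,m}(t)$ of each \OLOAlg instance, which by \Cref{alg:OLO unbounded} depends only on $G_1,\dots,G_t$ and on the past losses $C_{n,m}(s)(\bm Q_m(s)-\bm Q_n(s))$, $s\le t-1$. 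Using $\E[\mu_{n,m}^{(k)}(t)\mid\mathcal F_{t-1}]=C_{n,m}(t)a_{n,m}^{(k)}(t)$ together with the fact that $\mathring{\mu}_{n,m}^{(k)}(t)=C_{n,m}(t)\mathring{a}_{n,m}^{(k)}(t)$ is deterministic, the tower rule gives
\begin{align*}
&\E\left[\sum_{t=1}^T\sum_{(n,m)\in\mL}\sum_{k\in\mN}(\mu_{n,m}^{(k)}(t)-\mathring{\mu}_{n,m}^{(k)}(t))(Q_m^{(k)}(t)-Q_n^{(k)}(t))\right]\\
&\qquad=\E\left[\sum_{t=1}^T\sum_{(n,m)\in\mL}\langle C_{n,m}(t)(\bm Q_m(t)-\bm Q_n(t)),\;\bm a_{n,m}(t)-\mathring{\bm a}_{n,m}(t)\rangle\right].
\end{align*}
Writing $\bm g_t^{(n,m)}:=C_{n,m}(t)(\bm Q_m(t)-\bm Q_n(t))$, the inner $t$-sum for a fixed link is exactly $\text{D-Regret}_T^{\text{OLO}}$ of that link's \OLOAlg instance against the deterministic comparator sequence $\{\mathring{\bm a}_{n,m}(t)\}_{t\in[T]}$.

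\textbf{Step 2 (per-link dynamic regret).} Fix a link $(n,m)$ and check the hypotheses of \Cref{lem:OLO unbounded guarantee}: the action set $\triangle(\mN)$ has $\ell_1$-diameter $D=2$; $\bm g_t^{(n,m)}$ is observed in full at the end of round $t$ (we see $C_{n,m}(t)$, cf.\ \Cref{alg:stability}); $G_t$ is $\mathcal F_{t-1}$-measurable by Step 1; and, since $C_{n,m}(t)\le M$, we have $\lVert\bm g_t^{(n,m)}\rVert_\infty=C_{n,m}(t)\lVert\bm Q_m(t)-\bm Q_n(t)\rVert_\infty\le G_t$. Then, for the realized loss sequence and whenever $\max_{t\in[T]}G_t\ge1$, the lemma gives
\begin{equation*}
\sum_{t=1}^T\langle\bm g_t^{(n,m)},\bm a_{n,m}(t)-\mathring{\bm a}_{n,m}(t)\rangle=\O\left(\sqrt{2\,(2+P_{n,m}^a)}\;\sqrt{\sum_{t=1}^T\lVert\bm g_t^{(n,m)}\rVert_\infty^2}\;\log T\;\log(\max_{t}G_t)\right),
\end{equation*}
where $P_{n,m}^a:=\sum_{t=1}^{T-1}\lVert\mathring{\bm a}_{n,m}(t)-\mathring{\bm a}_{n,m}(t+1)\rVert_1$. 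In the degenerate case $\max_tG_t<1$, \OLOAlg never rescales and \citet[Theorem 6]{cutkosky2020parameter} gives the same bound without the last logarithm; I would fold this in under the convention that these logarithms are read as $\log(e+\cdot)$.

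\textbf{Step 3 (aggregation) and the main difficulty.} Since $\bm Q_n(t)$ and $\bm Q_m(t)$ occupy disjoint coordinate blocks of $\bm Q(t)$, $\lVert\bm Q_m(t)-\bm Q_n(t)\rVert_\infty^2\le2(\lVert\bm Q_m(t)\rVert_2^2+\lVert\bm Q_n(t)\rVert_2^2)\le2\lVert\bm Q(t)\rVert_2^2$, whence $\sqrt{\sum_t\lVert\bm g_t^{(n,m)}\rVert_\infty^2}\le\sqrt2\,M\sqrt{\sum_t\lVert\bm Q(t)\rVert_2^2}$ on every link, and $\log(\max_tG_t)\le\log(\max_{t\in[T]}\max_{(n,m)\in\mL}M\lVert\bm Q_m(t)-\bm Q_n(t)\rVert_\infty)$. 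Summing the Step-2 estimate over the $\lvert\mL\rvert=\O(1)$ links and applying Cauchy--Schwarz, $\sum_{(n,m)}\sqrt{2+P_{n,m}^a}\le\sqrt{\lvert\mL\rvert}\,\sqrt{2\lvert\mL\rvert+P_T^a}=\O(\sqrt{1+P_T^a})$ with $P_T^a=\sum_{(n,m)}P_{n,m}^a$; taking the outer expectation of this trajectory-wise bound (noting $P_T^a$ and $\log T$ are deterministic) recovers the stated estimate. The routine part is this algebra; the real care lies in Step 1 — one must ensure that nothing used by the \OLOAlg instances to form $\bm a(t)$ depends on the round-$t$ randomness $C_{n,m}(t),\mu_{n,m}^{(k)}(t)$, so that replacing $\mu$ by its conditional mean is legitimate and the measurability hypothesis of \Cref{lem:OLO unbounded guarantee} genuinely holds. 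A related subtlety is that \Cref{lem:OLO unbounded guarantee} must be used as a per-trajectory statement: applying it in expectation link-by-link would push Jensen's inequality the wrong way on $\sqrt{\sum_t\lVert\bm Q(t)\rVert_2^2}$, and one could not reassemble the claimed form.
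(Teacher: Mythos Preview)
Your proposal is correct and follows essentially the same approach as the paper: replace $\mu_{n,m}^{(k)}(t)$ by its conditional mean via the tower rule, apply \Cref{lem:OLO unbounded guarantee} per link on the resulting trajectory-wise dynamic regret, bound $\lVert\bm g_t^{(n,m)}\rVert_\infty^2\le 2M^2\lVert\bm Q(t)\rVert_2^2$, aggregate over the finitely many links, and take the outer expectation. If anything you are more careful than the paper's write-up --- the paper collapses the per-link path lengths and loss norms directly into joint square roots without spelling out the Cauchy--Schwarz step, and it does not comment on the degenerate case $\max_tG_t<1$ or on the trajectory-wise (rather than expected) use of \Cref{lem:OLO unbounded guarantee}; your treatment of these points is sound.
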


The proof is almost directly applying \Cref{lem:ABGD guarantee}, so we postpone it to \Cref{sec:multi-hop Lyapunov and Online Learning appendix}.
Thanks to property \textit{ii)}, the RHS of \Cref{thm:multi-hop stability decision making} depends on the $\ell_2$-norm of the queue lengths $\sqrt{\sum_{t=1}^T \lVert \bm Q(t)\rVert_2^2}$. As we will see shortly, this is pivotal to the self-bounding argument sketched in \Cref{eq:self-bounding informal}.

\subsection{Main Theorem for Multi-Hop Network Stability}\label{sec:multi-hop stability main theorem}
As sketched in \Cref{sec:multi-hop stability sketch}, putting previous conclusions together and use a so-called {self-bounding} property, the following guarantee for multi-hop network stability can be derived:
\begin{theorem}[Main Theorem for Multi-Hop Network Stability]\label{thm:multi-hop stability main theorem}
Suppose that $\{\mathring{\bm a}_{n,m}(t)\in \triangle(\mN)\}_{(n,m)\in \mL,t\in [T]}$ satisfies \Cref{def:multi-hop stability network stability} and its path length satisfies \footnote{This assumption on path lengths comes from \citep[Assumption 2]{huang2023queue}. As discussed in \Cref{remark:dynamic regret}, such conditions are necessary.}
\begin{equation}\label{eq:network stability path length}
P_t^a\triangleq \sum_{s=1}^{t-1} \sum_{(n,m)\in \mL} \lVert \mathring{\bm a}_{n,m}(s) - \mathring{\bm a}_{n,m}(s+1)\rVert_1\le C^a t^{1/2 - \delta_a},\quad \forall t=1,2,\ldots,T,
\end{equation}
where $C^a$ and $\delta_a$ are assumed to be known constants but the precise $P_t^a$ or $\{\mathring{\bm a}_{n,m}(t)\in \triangle(\mN)\}_{(n,m)\in \mL,t\in [T]}$ both remain unknown.
Then, if we execute the \SSAlg framework in \Cref{alg:stability} with \OLOAlg defined in \Cref{alg:OLO unbounded}, the following performance guarantee is enjoyed:
\begin{equation*}
\frac 1T\E\left [\sum_{t=1}^T \lVert \bm Q(t)\rVert_1\right ]=\O\left (\frac{(N^2(2NM+R)^2 + \epsilon_W N^2 (2NM+R))C_W + (N^4 M^2 + N^2 R^2)}{\epsilon_W}\right )+o_T(1).
\end{equation*}
That is, when $T\gg 0$, we have $\frac{1}{T} \E\left [\sum_{t=1}^T \lVert \bm Q(t)\rVert_1\right ]=\O_T(1)$, \textit{i.e.}, \Cref{eq:average queue} holds and the system is stable.
\end{theorem}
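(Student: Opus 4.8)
The plan is to reduce the claim to a single scalar inequality of the form $\epsilon_W X \le A\,X^{3/4} + B$, where $X \triangleq \E[\sum_{t=1}^T \lVert \bm Q(t)\rVert_1]$, by chaining the three preceding ingredients --- the Lyapunov drift bound \Cref{lem:multi-hop stability Lyapunov}, the negative-drift property of the reference policy \Cref{lem:multi-hop stability network stability}, and the online-learning guarantee for the link allocations \Cref{thm:multi-hop stability decision making} --- and then to extract $\frac1T X = \O_T(1)$ from this inequality via the self-bounding argument sketched in \Cref{eq:self-bounding informal}.

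The first step is the chaining. I would start from \Cref{lem:multi-hop stability Lyapunov}, read as $-\tfrac12 N^2(3(NM)^2+2R^2)T \le \E[\sum_t\sum_{(n,m)}\sum_k \mu_{n,m}^{(k)}(t)(Q_m^{(k)}(t)-Q_n^{(k)}(t)) + \sum_n\sum_k Q_n^{(k)}(t)\lambda_n^{(k)}(t)]$, and insert the reference flows $\mathring\mu_{n,m}^{(k)}(t)=C_{n,m}(t)\mathring a_{n,m}^{(k)}(t)$ by splitting $\mu=(\mu-\mathring\mu)+\mathring\mu$. The ``learner minus reference'' part is exactly the quantity controlled by \Cref{thm:multi-hop stability decision making}, hence at most $R_1 \triangleq \O(M\sqrt{1+P_T^a}\,\E[\sqrt{\sum_t\lVert\bm Q(t)\rVert_2^2}\,\log T\,\log(\cdots)])$; the remaining ``reference plus arrival'' part is precisely what \Cref{lem:multi-hop stability network stability} bounds by $-\epsilon_W X + (N^2(2NM+R)^2+\epsilon_W N^2(2NM+R))C_W T$. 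Rearranging gives $\epsilon_W X \le R_1 + B$ with $B \triangleq (N^2(2NM+R)^2+\epsilon_W N^2(2NM+R))C_W T + \tfrac12 N^2(3(NM)^2+2R^2)T = \O(((N^2(2NM+R)^2+\epsilon_W N^2(2NM+R))C_W + N^4M^2 + N^2R^2)\,T)$.

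The second step turns $R_1$ into the form $A\,X^{3/4}$. Two easy simplifications handle the leading factors: (a) by \Cref{eq:multi-hop stability dynamics} the total queue length grows by at most a problem-dependent constant $c_0=\poly(N,M,R)$ per round, so deterministically $\lVert\bm Q(t)\rVert_\infty\le\lVert\bm Q(t)\rVert_1\le c_0 T$, whence the $\log(\cdots)$ term is $\O(\log(MNRT))$ and can be pulled out of the expectation; and (b) \Cref{eq:network stability path length} gives $\sqrt{1+P_T^a}=\O((1+\sqrt{C^a})\,T^{1/4-\delta_a/2})$. The crux is the pathwise self-bounding inequality $\sqrt{\sum_t\lVert\bm Q(t)\rVert_2^2}=\O(c_0^{1/4}(\sum_t\lVert\bm Q(t)\rVert_1)^{3/4})$: writing $B_\infty\triangleq\max_s\lVert\bm Q(s)\rVert_1$, one has $\lVert\bm Q(t)\rVert_2^2\le\lVert\bm Q(t)\rVert_\infty\lVert\bm Q(t)\rVert_1\le B_\infty\lVert\bm Q(t)\rVert_1$, while the bounded increments force the queue to stay above $B_\infty/2$ throughout the $\Theta(B_\infty/c_0)$ rounds preceding the round achieving $B_\infty$, so $\sum_s\lVert\bm Q(s)\rVert_1=\Omega(B_\infty^2/c_0)$ and hence $B_\infty=\O(\sqrt{c_0\sum_s\lVert\bm Q(s)\rVert_1})$; substituting back and taking square roots yields the claim, and Jensen's inequality on the concave map $x\mapsto x^{3/4}$ upgrades it to $\E[\sqrt{\sum_t\lVert\bm Q(t)\rVert_2^2}]=\O(c_0^{1/4}X^{3/4})$. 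Altogether $\epsilon_W X\le A\,X^{3/4}+B$ with $A=\O(M\,c_0^{1/4}(1+\sqrt{C^a})\,T^{1/4-\delta_a/2}\,\poly(\log T))$.

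Finally I would solve $\epsilon_W X\le A\,X^{3/4}+B$ by a two-case split. If $A\,X^{3/4}\le B$, then $\epsilon_W X\le 2B$, so $\frac1T X=\O(B/(\epsilon_W T))=\O(((N^2(2NM+R)^2+\epsilon_W N^2(2NM+R))C_W + N^4M^2 + N^2R^2)/\epsilon_W)$, the stated constant term. Otherwise $\epsilon_W X\le 2A\,X^{3/4}$, so $X\le(2A/\epsilon_W)^4$ and $\frac1T X\le(2A/\epsilon_W)^4/T=\O(M^4 c_0(1+\sqrt{C^a})^4\,\epsilon_W^{-4}\,T^{-2\delta_a}\,\poly(\log T))=o_T(1)$ since $\delta_a>0$. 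In either case $\frac1T X=\O(\text{stated constant}/\epsilon_W)+o_T(1)$, and as both terms are $\O_T(1)$, \Cref{eq:average queue} holds, which is the claim. I expect the main obstacle to be the self-bounding step: establishing the pathwise inequality $\sqrt{\sum_t\lVert\bm Q(t)\rVert_2^2}=\O(c_0^{1/4}(\sum_t\lVert\bm Q(t)\rVert_1)^{3/4})$ from the bounded-increment property, decoupling it cleanly from the $\log(\cdots)$ and $\sqrt{1+P_T^a}$ factors sitting inside the expectation, and bookkeeping the exponents so the $T^{1-2\delta_a}$ contribution genuinely decays after dividing by $T$; the remainder is just assembling the three cited results and a routine case analysis.
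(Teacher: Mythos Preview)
Your proposal is correct and follows essentially the same route as the paper: chain \Cref{lem:multi-hop stability Lyapunov}, \Cref{lem:multi-hop stability network stability}, and \Cref{thm:multi-hop stability decision making}; convert $\sqrt{\sum_t\lVert\bm Q(t)\rVert_2^2}$ into $\O\big((\sum_t\lVert\bm Q(t)\rVert_1)^{3/4}\big)$ via the bounded-increment property (the paper states this as \Cref{lem:2 upper bound}, applied componentwise, which is the same argument as your $B_\infty=\O(\sqrt{c_0\sum_s\lVert\bm Q(s)\rVert_1})$); apply Jensen; and finish with a self-bounding inequality. The one noteworthy difference is your treatment of the $\log(\max_t M\lVert\bm Q_m(t)-\bm Q_n(t)\rVert_\infty)$ factor: the paper keeps it inside the expectation, bounds it by $\log(M\sum_t\lVert\bm Q(t)\rVert_1)$, uses Jensen on $x\mapsto x^{3/4}\log(Mx)$, and then needs the self-bounding lemma $y\le f+g\,y^{3/4}\log y\Rightarrow y=\O(f)+\Otil(g^4)$ (\Cref{lem:3/4 with log self-bounding}); you instead bound it deterministically by $\O(\log T)$ via $\lVert\bm Q(t)\rVert_1\le c_0 T$, which absorbs the log into $A$ and reduces the final step to the cleaner $\epsilon_W X\le A\,X^{3/4}+B$ and your two-case split---a genuine simplification that avoids the $\log y$ complication entirely.
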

\begin{remark}\label{remark:stability main theorem}
Any reference policy $\{\mathring{\bm a}(t)\}_{t\in [T]}$ satisfying \Cref{eq:network stability path length} is called ``mildly varying''. As mentioned in \Cref{remark:dynamic regret}, it is impossible to achieve non-trivial performance without restricting the reference sequence.
We also compare our result with two previous results \citep{huang2023queue,yang2023learning} which also ensured network stability in adversarial networks under bandit feedback: Under a path length assumption similar to but looser than \Cref{eq:network stability path length}, \citet{huang2023queue} stabilized single-server networks. 
By assuming the environment (instead of the reference policy which we do) is mildly varying, \citet{yang2023learning} stabilized single-hop networks.
Thus, \Cref{thm:multi-hop stability main theorem} is the first guarantee applicable to adversarial multi-hop networks under bandit feedback.
\end{remark}

A formal proof resides in \Cref{sec:multi-hop stability appendix}. We only highlight the self-bounding step here:
\begin{proof}[Proof Sketch of \Cref{thm:multi-hop stability main theorem}]
The first step of the proof is comparing the guarantee from \Cref{def:multi-hop stability network stability} and that from Lyapunov drift analysis.
Specifically, recall that \Cref{lem:multi-hop stability network stability} upper bounds the total queue length using $-\E[\sum_{t=1}^{T} \sum_{(n,m)\in \mL} \langle \bm C_{n,m}(t) (\bm Q_m(t) - \bm Q_n(t)),\mathring{\bm a}_{n,m}(t)\rangle]$ (together with some other terms, which are constants after taking expectations) while \Cref{lem:multi-hop stability Lyapunov} reveals the non-positivity of $-\E[\sum_{(n,m)\in \mL} \langle \bm C_{n,m}(t) (\bm Q_m(t) - \bm Q_n(t)),{\bm a}_{n,m}(t)\rangle]$ (again, many constants omitted).
Furthermore, via the guarantee of \OLOAlg (\Cref{alg:OLO unbounded}) in \Cref{thm:multi-hop stability decision making}, these two terms are actually pretty close -- they only differ by $\Otil_T\left (\sqrt{1+C^a T^{1/2-\delta_a}} \E\left [\sqrt{\sum_{t=1}^T \lVert \bm Q(t)\rVert_2^2}\right ]\right )$, where we used the assumption that $P_T^a\le C^a T^{1/2-\delta_a}$.
By a property that ensures $\sum_{t=1}^T x_t^2 \le 4(\sum_{t=1}^T x_t)^{1/5}$ when $\lvert x_t-x_{t+1}\rvert\le 1$ (\Cref{lem:2 upper bound}), this $\E\left [\sqrt{\sum_{t=1}^T \lVert \bm Q(t)\rVert_2^2}\right ]$ can be controlled by $\E\left [\sum_{t=1}^T \lVert \bm Q(t)\rVert_1\right ]^{3/4}$.

Finishing these steps, which are detailed in \Cref{lem:multi-hop stability tedious calc}, we are able to conclude
\begin{equation*}
\E\left [\sum_{t=1}^T \lVert \bm Q(t)\rVert_1\right ]\le f(T) + g(T) \E\left [\sum_{t=1}^T \lVert \bm Q(t)\rVert_1\right ]^{3/4} \log \E\left [\sum_{t=1}^T \lVert \bm Q(t)\rVert_1\right ],
\end{equation*}
where $f(T) = \O_T(T)$ and $g(T) = \O_T(T^{1/4-\delta_a/2})$ are some abstract functions to simplify notations.

Therefore, this inequality is in a self-bounding form that $y\le f+y^{3/4} g \log y$ where $y$ is the total queue lengths in $T$ rounds.
As we informally stated in \Cref{eq:self-bounding informal}, this gives our system stability guarantee.
Indeed, in \Cref{lem:3/4 with log self-bounding}, we show that $y\le f+y^{3/4} g \log y$ implies $y=\O(f)+\Otil(g^4)$. Therefore, $y=\E[\sum_{t=1}^T \lVert \bm Q(t)\rVert_1]=\O_T(T)+\Otil_T(T^{1-2\delta_a})$ and thus $\frac 1T \E[\sum_{t=1}^T \lVert \bm Q(t)\rVert_1]=\O_T(1)$.
\end{proof}


\begin{algorithm}[t!]
\caption{\UMAlg: \textbf{U}tility \textbf{M}aximization via Online Linear \textbf{O}ptimization and Bandit Convex \textbf{O}ptimization}
\label{alg:utility}
\begin{algorithmic}[1]
\REQUIRE{Number of rounds $T$, set of servers $\mN$ and links $\mL$, maximum capacity $M$, feasible arrival rates $\Lambda$. Parameter $V$. An online linear optimization algorithm $\OLO$ (\Cref{alg:OLO unbounded}) and a bandit convex optimization algorithm $\BCO$ (\Cref{alg:ABGD}).}
\STATE For each link $(n,m)\in \mL$, initialize an instance of $\OLO$ with action set $\triangle(\mN)$ as $\OLO_{n,m}$.
\STATE {\color{blue} Initialize an instance of $\BCO$ with action set $\Lambda$.}
\FOR{$t=1,2,\ldots,T$}
\STATE For each link $(n,m)\in \mL$, pass the maximum loss magnitude for this round $M\lVert \bm Q_m(t)-\bm Q_n(t)\rVert_\infty$ to $\OLO_{n,m}$. Pick link allocation $\bm a_{n,m}(t)\in \triangle(\mN)$ as the output of $\OLO_{n,m}$.
\STATE {\color{blue}Call $\BCO$ with learning rates defined in \Cref{eq:true learning rate in BCO}. Pick arrival rates $\bm \lambda(t)\in \Lambda$ as its output.
\begin{align}
\eta_t&=\left (C^\lambda T^{1/2 - \delta_\lambda}\middle / \substack{\left (C^\lambda T^{1/2 - \delta_\lambda}\right )^{7/3} \left (4r^{-3} d^2 \right )^{28/9} \left (M+R\right )^{4/3}+\\
C^\lambda T^{1/2-\delta_\lambda} (r^{-3} d^2 VG^2 / L)^{4/3}+\\
\sum_{s=1}^t \left ((\lVert \bm q_s\rVert_\infty + VG)^2 (\lVert \bm q_s\rVert_2 + VL)^2\right )^{1/3}}\right )^{3/4}, \nonumber\\
\delta_t &= \left (\eta_t d^2 \frac{(\lVert \bm Q(t)\rVert_\infty + VG)^2}{(\lVert \bm Q(t)\rVert_2 + VL)}\right )^{1/3},\quad \alpha_t = \frac{\delta_t}{r}.\label{eq:true learning rate in BCO}
\end{align}}
\STATE Observe capacities $\{C_{n,m}(t)\}_{(n,m)\in \mL}$ and actual data transmissions $\{\mu_{n,m}^{(k)}(t)\}_{(n,m)\in \mL,k\in \mN}$.
\STATE Calculate queue lengths $\bm Q(t+1)$ from $\bm Q(t)$ according to \Cref{eq:multi-hop stability dynamics}.
\STATE For each link $(n,m)\in \mL$, pass the loss vector $C_{n,m}(t)(\bm Q_m(t)-\bm Q_n(t))$ to $\OLO_{n,m}$.
\STATE {\color{blue} Observe the collected utility $g_t(\bm \lambda(t))$. Pass the loss $\langle \bm Q(t),\bm \lambda (t)\rangle - V g_t(\bm \lambda(t))$ to $\BCO$.}
\ENDFOR
\end{algorithmic}
\end{algorithm}

\section{Utility Maximization in Adversarial Multi-Hop Networks}\label{sec:multi-hop utility}
We now turn to the utility maximization task. 
In this task, in addition to the capacity allocations, the arrival rates are also decided by the scheduler with an objective of maximizing the unknown and time-varying utility function.
The scheduler's objective is to maximize the average utility it gains (\Cref{eq:average reward}), while ensuring the average number of jobs in the network remains small (\Cref{eq:average queue}). 

This section is organized similar to \Cref{sec:multi-hop stability}: We first explain the motivation of our algorithmic framework \UMAlg in \Cref{alg:utility} and then present the assumptions together with analysis. 

\subsection{Motivation of Our Algorithmic Framework}\label{sec:multi-hop utility sketch}
In \Cref{alg:utility}, we give the general algorithmic framework of \textbf{U}tility \textbf{M}aximization via Online Linear \textbf{O}ptimization and Bandit Convex \textbf{O}ptimization (\UMAlg) which achieves utility optimization via the plug-in of two optimization sub-rountines $\OLO$ and $\BCO$.
The differences between it and our system stability algorithm (\SSAlg; \Cref{alg:stability}) is marked in blue.
As we already motivated in \Cref{sec:multi-hop stability sketch} that an OLO algorithm $\OLO$ can help stabilize the system (recall \Cref{eq:multi-hop stability OLO objective informal}), this section focuses on motivating the other sub-rountine $\BCO$ by going through the design of \UMAlg.

To handle the utility function, instead of the Lyapunov analysis in the previous section, \UMAlg is based on the Lyapunov drift-plus-penalty analysis \citep[Theorem 4.2]{neely2010stochastic}.
In \Cref{lem:multi-hop utility Lyapunov}, we derive
\begin{align}
&\E\left [\sum_{t=1}^T \sum_{(n,m)\in \mL} \langle C_{n,m}(t) (\bm Q_m(t) - \bm Q_n(t)), \bm a_{n,m}(t)\rangle\right ] + \nonumber\\
&\E\left [\sum_{t=1}^T \sum_{n\in \mN} \langle \bm Q_n(t),\bm \lambda_n(t)\rangle\right ] - V \E\left [\sum_{t=1}^T \bigg (g_t(\bm \lambda(t) - g_t(\mathring{\bm \lambda}(t)))\bigg )\right ]\gtrsim 0,\label{eq:multi-hop utility Lyapunov informal}
\end{align}
where $V$ is a constant that we can arbitrarily pick for analytical purposes.
Intuitively, this $V$ stands for a trade-off between the stability part $\langle \bm Q_n(t),\bm \lambda_n(t)\rangle$ and the utility part $g_t(\bm \lambda(t))$.

Again motivated by \citet{huang2023queue}, our goal is to minimize \Cref{eq:multi-hop utility Lyapunov informal}.
The first term in \Cref{eq:multi-hop utility Lyapunov informal} is exactly the OLO optimization objective from the previous section (recall \Cref{eq:multi-hop stability OLO objective informal}), which can be minimized by the \OLOAlg algorithm given in \Cref{alg:OLO unbounded}.
For the second and third term, we would like to
\begin{equation}
\text{Minimize }\E\left [\sum_{t=1}^T \bigg (\langle \bm Q(t),\bm \lambda(t)\rangle-Vg_t(\bm \lambda(t))\bigg )\right ].\label{eq:multi-hop utility BCO objective informal}
\end{equation}

We now also tackle \Cref{eq:multi-hop utility BCO objective informal} using online learning techniques:
As $g_t$ is defined over all possible $\bm \lambda(t)$'s and $\bm \lambda(t)$ can be arbitrarily chosen from the feasible action set $\Lambda$, we regard ``deciding $\bm \lambda(t)$'' as an learning problem with action set $\Lambda$ (instead of making decisions on each link or server separately in \Cref{sec:multi-hop stability OLO}) where the loss of picking $\bm \lambda$ in round $t$ is $
\ell_t(\bm \lambda)\triangleq \langle \bm Q(t),\bm \lambda\rangle - V g_t(\bm \lambda)$.
This loss function is convex \textit{w.r.t.} $\bm \lambda$ as $\langle \cdot,\cdot \rangle$ is linear and $g_t$ is concave.
However, since only bandit feedback on $g_t$ is available, we can only calculate $\ell_t(\bm \lambda)$ but not the whole $\ell_t$. Thus, this problem is not an OLO problem as \Cref{def:OLO} requires full information feedback. Instead, it belongs to the category of \textit{Bandit Convex Optimization} (BCO) \citep{flaxman2005online}, which we will define in \Cref{def:BCO}.

Similar to \Cref{sec:multi-hop stability sketch}, we now discuss what properties the $\BCO$ sub-routine should enjoy. We have the following challenges that is unique due to the network optimization context:
\begin{enumerate}
\item[\textit{i)}] Again, the queue lengths can potentially go unbounded, which means the loss $\ell_t(\bm \lambda)$ can have large magnitudes. However, different from the OLO problem we met in \Cref{eq:multi-hop stability OLO objective informal}, in BCO problems we face general convex functions and thus Lipschitzness (\textit{i.e.}, the maximum gradient magnitude) also plays a role as it characterizes how fast $\ell_t$ changes with $\bm \lambda$. Therefore, our $\BCO$ shall not only bare large loss magnitudes but also resist from huge Lipshictzness. As we will see in \Cref{eq:true learning rate in BCO}, adapting to both magnitudes and Lipschitzness is particularly difficult.
\item[\textit{ii)}] The second challenge is again due to self-bounding analysis: Specifically, we want to conduct self-bounding analyses on the queue lengths and also the utility gap (both similar to \Cref{eq:self-bounding informal}), we also want $\BCO$ to be adaptive to the loss functions' magnitudes and Lipschitzness.
\end{enumerate}

In \Cref{sec:multi-hop utility BCO}, we introduce the details of our \BCOAlg algorithm that ensures both \textit{i)} and \textit{ii)}.
Equipped with this algorithm, we can optimize \Cref{eq:multi-hop utility BCO objective informal} by deploying it over the action set $\Lambda$.
Moreover, as deploying \OLOAlg (\Cref{alg:OLO unbounded}) on each link $(n,m)\in \mL$ minimizes $\E[\sum_{t=1}^T \sum_{(n,m)\in \mL} \langle \bm Q_m(t)-\bm Q_n(t),\bm \mu_{n,m}(t)\rangle]$, these two algorithms can together minimize the Lyapunov drift-plus-penalty in \Cref{eq:multi-hop utility Lyapunov informal}. Such a combination gives the \UMAlg framework in \Cref{alg:utility}.

In the remaining of this section, we present the analysis of \UMAlg: In \Cref{sec:multi-hop utility assumption}, we introduce the reference sequence assumption. In \Cref{sec:multi-hop utility Lyapunov}, we present the Lyapunov drift-plus-penalty analysis. In \Cref{sec:multi-hop utility BCO}, we rigorously define the BCO problem and present our \BCOAlg algorithm (\Cref{alg:ABGD}). Finally, by combining the \OLOAlg guarantee from \Cref{sec:multi-hop stability OLO} and the \BCOAlg guarantee from \Cref{sec:multi-hop utility BCO}, we yield the utility maximization guarantee in \Cref{sec:multi-hop utility main theorem}.

\subsection{Reference Policy Assumption}\label{sec:multi-hop utility assumption}
The assumption we need in the multi-hop utility maximization task is similar to the one in multi-hop network stability (\Cref{def:multi-hop stability network stability}), with one important difference that our arrival rates are no longer fixed but decided by the scheduler. Hence, instead of assuming a sequence of $\{\mathring{\bm a}(t)\}_{t\in [T]}$ stabilizing the system with the obliviously adversarial arrival rates $\{\bm \lambda(t)\}_{t\in [T]}$, we assume the existence of reference sequence $\{(\mathring{\bm a}(t),\mathring{\bm \lambda}(t))\}_{t\in [T]}$ such that $\{\mathring{\bm a}(t)\}_{t\in [T]}$ stabilizes system with reference arrival rates $\{\mathring{\bm \lambda}(t)\}_{t\in [T]}$. Formally, we make the following assumption.
\begin{assumption}[Multi-Hop Piecewise Stability for Utility Maximization]\label{def:multi-hop utility network stability}
There exists a {reference action sequence} $\{(\mathring{\bm a}(t),\mathring{\bm \lambda}(t))\}_{t\in [T]}$ (where $\mathring{\bm a}(t)=\{\mathring{\bm a}_{n,m}(t)\in \triangle(\mN)\}_{(n,m)\in \mL}$ and $\mathring{\bm \lambda}(t)=\{\mathring{\bm \lambda}_n(t)\in \Lambda_n\}_{n\in \mN}$, in analogue to the scheduler's action sequence) such that there are constants $C_W\ge 0$, $\epsilon_W\ge 0$, and a partition $W_1,W_2,\ldots,W_J$ of $[T]$, such that $\sum_{j=1}^J (\lvert W_j\rvert-1)^2\le C_W T$ and
\begin{align*}
&\frac{1}{\lvert W_j\rvert} \sum_{t\in W_j} \sum_{(n,m)\in \mL} C_{n,m}(t) \mathring a_{n,m}^{(k)}(t) \ge \epsilon_W + \frac{1}{\lvert W_j\rvert} \sum_{t\in W_j} \left (\mathring \lambda_n^{(k)}(t) + \sum_{(o,n)\in \mL} C_{o,n}(t) \mathring a_{o,n}^{(k)}(t)\right ),\\
&\quad \forall j\in [J],n\in \mN,k\in \mN.
\end{align*}
\end{assumption}

Imitating \Cref{lem:multi-hop stability network stability}, one can derive the following, whose proof is in \Cref{sec:multi-hop utility network stability appendix}.
\begin{lemma}[Ability of $\{(\mathring{\bm a}(t),\mathring{\bm \lambda}(t))\}_{t\in [T]}$ in Stabilizing the Network]\label{lem:multi-hop utility network stability}
If $\{(\mathring{\bm a}(t),\mathring{\bm \lambda}(t))\}_{t\in [T]}$ satisfies \Cref{def:multi-hop utility network stability}, then for any scheduler-generated queue lengths $\{\bm Q(t)\}_{t\in [T]}$,
\begin{align}
&\quad \epsilon_W \E\left [\sum_{t=1}^T \sum_{n\in \mN} \sum_{k\in \mN} Q_n^{(k)}(t)\right ] - (N^2 (2NM+R)^2 + \epsilon_W N^2 (2NM+R)) C_W T \nonumber\\
&\le-\E\left [\sum_{t=1}^T \sum_{(n,m)\in \mL}\sum_{k\in \mN} \mathring \mu_{n,m}^{(k)}(t) (Q_m^{(k)}(t) - Q_n^{(k)}(t))\right ] - \E\left [\sum_{t=1}^T \sum_{n\in \mN} \sum_{k\in \mN} Q_n^{(k)}(t) \mathring \lambda_n^{(k)}(t)\right ].\label{eq:multi-hop utility network stability}
\end{align}
\end{lemma}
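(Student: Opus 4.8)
The plan is to imitate the proof of \Cref{lem:multi-hop stability network stability}, replacing the obliviously-chosen arrival rates $\lambda_n^{(k)}(t)$ everywhere by the reference arrival rates $\mathring\lambda_n^{(k)}(t)$ and invoking \Cref{def:multi-hop utility network stability} in place of \Cref{def:multi-hop stability network stability}; here, as in \Cref{thm:multi-hop stability decision making}, $\mathring\mu_{n,m}^{(k)}(t)$ denotes the reference policy's (expected) service $C_{n,m}(t)\mathring a_{n,m}^{(k)}(t)\in[0,M]$. The core of the argument is a per-window (``$T$-slot'') drift computation carried out separately for each queue $Q_n^{(k)}$.

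Fix a queue $Q_n^{(k)}$ and a window $W_j=\{t_j,t_j+1,\ldots,t_j+|W_j|-1\}$ of the partition in \Cref{def:multi-hop utility network stability}, and write $\psi_n^{(k)}(t)\triangleq \mathring\lambda_n^{(k)}(t)+\sum_{(o,n)\in\mL}\mathring\mu_{o,n}^{(k)}(t)-\sum_{(n,m)\in\mL}\mathring\mu_{n,m}^{(k)}(t)$ for the reference net inflow into $Q_n^{(k)}$ at round $t$. Two elementary bounds drive everything: each queue changes by at most $2NM+R$ per round (external arrivals $\le R$, forwarded arrivals $\le NM$, services $\le NM$), so $|Q_n^{(k)}(t)-Q_n^{(k)}(t_j)|\le(2NM+R)(t-t_j)$ for $t\in W_j$; and likewise $|\psi_n^{(k)}(t)|\le 2NM+R$. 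I would estimate $\epsilon_W\sum_{t\in W_j}Q_n^{(k)}(t)$ by first replacing each $Q_n^{(k)}(t)$ with its first-slot value $Q_n^{(k)}(t_j)$, next using $Q_n^{(k)}(t_j)\ge 0$ together with the window inequality $\sum_{t\in W_j}\psi_n^{(k)}(t)\le-\epsilon_W|W_j|$ (which is exactly the condition in \Cref{def:multi-hop utility network stability} multiplied through by $|W_j|$ and rearranged), and finally substituting $Q_n^{(k)}(t)$ back for $Q_n^{(k)}(t_j)$. Each of the two swaps costs at most $(2NM+R)^2(|W_j|-1)^2$ after noting $\sum_{t\in W_j}(t-t_j)=\tfrac{(|W_j|-1)|W_j|}{2}\le(|W_j|-1)^2$, so one arrives at
\[
\epsilon_W \sum_{t\in W_j} Q_n^{(k)}(t)\;\le\;-\sum_{t\in W_j} Q_n^{(k)}(t)\,\psi_n^{(k)}(t)\;+\;\big((2NM+R)^2+\epsilon_W(2NM+R)\big)(|W_j|-1)^2 .
\]

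Next I would sum this over all $n,k\in\mN$ (an $N^2$ factor) and over all windows of the partition (so that $\sum_j\sum_{t\in W_j}=\sum_{t=1}^T$ and $\sum_j(|W_j|-1)^2\le C_W T$ by \Cref{def:multi-hop utility network stability}), reindex the cross-link terms (renaming the incoming-link sum $\sum_n\sum_{(o,n)\in\mL}$ as a sum over $(n,m)\in\mL$ with coefficient $Q_m^{(k)}(t)$, so that $-\sum_{n,k}Q_n^{(k)}(t)\psi_n^{(k)}(t)=-\sum_{(n,m)\in\mL}\sum_{k\in\mN}\mathring\mu_{n,m}^{(k)}(t)\big(Q_m^{(k)}(t)-Q_n^{(k)}(t)\big)-\sum_{n,k}Q_n^{(k)}(t)\mathring\lambda_n^{(k)}(t)$), and take expectations over the randomness in $\{\bm Q(t)\}_{t\in[T]}$. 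The result is precisely the inequality claimed.

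Conceptually there is nothing new here beyond the classical stationary-case Lyapunov drift argument, and the scheduler-chosen / adversarial arrival rates create no extra difficulty because \Cref{def:multi-hop utility network stability} constrains the reference pair $(\mathring{\bm a}(t),\mathring{\bm\lambda}(t))$ jointly, in exact analogy with \Cref{def:multi-hop stability network stability}. The one step I would flag as the likeliest source of a slip is purely clerical: keeping the constant at $2NM+R$ in the one-step queue-change and net-inflow bounds, using $\tfrac{(|W_j|-1)|W_j|}{2}\le(|W_j|-1)^2$ so that no stray factor of $2$ survives into the $\big(N^2(2NM+R)^2+\epsilon_W N^2(2NM+R)\big)C_W T$ term, and carrying out the incoming-versus-outgoing link reindexing without sign or index errors.
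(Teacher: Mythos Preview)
Your proposal is correct and takes essentially the same approach as the paper: the paper's proof literally states that it is identical to that of \Cref{lem:multi-hop stability network stability} with $\bm\lambda(t)$ replaced by $\mathring{\bm\lambda}(t)$, and you have spelled out exactly that argument (per-window freeze-and-swap of $Q_n^{(k)}(t)$ at the first slot, followed by the link reindexing). One tiny wording nit: you say ``each of the two swaps costs at most $(2NM+R)^2(|W_j|-1)^2$,'' but in fact the $\epsilon_W$-side swap costs only $\epsilon_W(2NM+R)(|W_j|-1)^2$ --- your displayed inequality already has the right two-term error, so this is purely cosmetic.
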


Still, we remark that our scheduler cannot access the reference action sequence $\{(\mathring{\bm a}(t),\mathring{\bm \lambda}(t))\}_{t\in [T]}$. However, similar to the \SSAlg algorithm, our \UMAlg algorithm can also learn to stabilize the system.
Even more, it also learns to outperform the utility maximization performance of \textit{any} mildly varying reference policy.
Specifically, \textit{i)} our action sequence $\{(\bm a(t),\bm \lambda(t))\}_{t\in [T]}$ also stabilizes the system, \textit{i.e.}, $\frac 1T \E[\sum_{t=1}^T \lVert \bm Q(t)\rVert_1\rvert]=\O_T(1)$; and, \textit{ii)} its utility matches any mildly varying reference policy $\{(\mathring{\bm a}(t),\mathring{\bm \lambda}(t))\}_{t\in [T]}$ asymptotically --- that is, $\frac 1T \E[\sum_{t=1}^T g_t(\bm \lambda(t))]\xrightarrow{\text{polynomially}} \frac 1T \sum_{t=1}^T g_t(\mathring{\bm \lambda}(t))$.\footnote{According to the oblivious adversary assumption, $g_t$ is pre-determined. Thus, the $g_t$'s on the LHS and RHS are the same, so there is no need to take conditional expectation \textit{w.r.t.} previous actions $\{(\bm a(t),\bm \lambda(t))\}_{t\in [T]}$ or $\{(\mathring{\bm a}(t),\mathring{\bm \lambda}(t))\}_{t\in [T]}$.}

\subsection{Lyapunov Drift-plus-Penalty Analysis}\label{sec:multi-hop utility Lyapunov}
In the Lyapunov drift analysis (\Cref{sec:multi-hop stability Lyapunov}), we consider the drift function $\Delta(\bm Q(t))\triangleq \E[L_{t+1}-L_t \mid \bm Q(t)]$ where $L_t\triangleq \frac 12 \lVert \bm Q(t)\rVert_2^2$ is the Lyapunov function.
In the Lyapunov drift-plus-penalty (DPP) analysis \citep[Theorem 4.2]{neely2010stochastic}, we consider the DPP function $\Delta(\bm Q(t))-V\E[g_t(\bm \lambda(t)) \mid \bm Q(t)]$, where $V$ is arbitrarily determined for our purpose.
As we will see in \Cref{thm:multi-hop utility main theorem}, when $V$ is chosen to be no larger than a polynomial of $T$, our utility is at least that of any mildly varying reference policy minus $\O(V^{-1})$, thus implying a polynomially decaying gap between these two utilities.

Similar to the calculations in \Cref{lem:multi-hop stability network stability}, one can derive the following inequality (see \Cref{lem:multi-hop utility Lyapunov}):
\begin{align}
&\quad -\E\left [\sum_{t=1}^T \sum_{(n,m)\in \mL} \sum_{k\in \mN} C_{n,m}(t) (Q_m^{(k)}(t)-Q_n^{(k)}(t))a_{n,m}^{(k)}(t)\right ] \nonumber\\
&\quad -\E\left [\sum_{t=1}^T \sum_{n\in \mN} \sum_{k\in \mN} Q_n^{(k)}(t) \lambda_n^{(k)}(t)\right ]+V \E\left [\sum_{t=1}^T (g_t(\bm \lambda(t))-g_t(\mathring{\bm \lambda}(t)))\right ] \nonumber\\
&\le \frac 12 N^2 ((NM)^2+2(NM)^2+2R^2) T+V  \E\left [\sum_{t=1}^T (g_t(\bm \lambda(t))-g_t(\mathring{\bm \lambda}(t)))\right ].\label{eq:multi-hop utility Lyapunov}
\end{align}

Similar to the previous section, we want to make the RHS of \Cref{eq:multi-hop utility Lyapunov} close to that of \Cref{eq:multi-hop utility network stability}.
Specifically, we decompose these two RHS's into two parts and show the following inequalities:
\begin{align}
&\scalemath{0.95}{-\E\left [\sum_{t=1}^T \sum_{(n,m)\in \mL}\langle \mathring {\bm \mu}_{n,m}(t), \bm Q_m(t) - \bm Q_n(t)\rangle\right ]\lesssim -\E\left [\sum_{t=1}^T \sum_{(n,m)\in \mL} \langle \bm \mu_{n,m}(t), \bm Q_m(t)-\bm Q_n(t)\rangle\right ],~ \text{and}} \nonumber\\
&\scalemath{0.95}{- \E\left [\sum_{t=1}^T \sum_{n\in \mN} \langle \bm Q_n(t), \mathring {\bm \lambda}_n(t)\rangle\right ]\lesssim -\E\left [\sum_{t=1}^T \sum_{n\in \mN} \langle \bm Q_n(t), \bm \lambda_n(t)\rangle\right ]+V \E\left [\sum_{t=1}^T \bigg (g_t(\bm \lambda(t))-g_t(\mathring{\bm \lambda}(t))\bigg )\right ].} \label{eq:multi-hop utility BCO objective}
\end{align}

The first inequality is exactly our objective in \Cref{eq:multi-hop stability OLO objective informal}, which can be ensured via the \OLOAlg algorithm in \Cref{alg:OLO unbounded} -- recall its performance guarantee in \Cref{thm:multi-hop stability decision making}.
On the other hand, the second inequality is new in the utility maximization task.
While some algorithmic ingredients can be borrowed from the {Bandit Convex Optimization} (BCO) problem \citep{flaxman2005online}, new efforts need be made due to the network optimization context: Our BCO algorithm shall accept large loss magnitudes and Lipschitzness (due to potentially unbounded queue lengths), and its performance must be adaptive to the loss functions' magnitudes and Lipschitzness as well.

\subsection{\BCOAlg: Learning for Utility Maximization}\label{sec:multi-hop utility BCO}
As mentioned in the sketch (\Cref{sec:multi-hop utility sketch}), \Cref{eq:multi-hop utility BCO objective} is equivalent to minimizing the time-varying loss function $\ell_t(\bm \lambda)\triangleq \langle Q(t),\bm \lambda\rangle - V g_t(\bm \lambda)$ under bandit feedback over the action set $\Lambda$.
This problem is different from the OLO problem introduced in \Cref{def:OLO} as we do not have full-information feedback:
Indeed, we assume $g_t(\bm \lambda(t))$ instead of the whole $g_t$ will be revealed to the scheduler, hence only $\ell_t(\bm \lambda(t))$, the actual loss associated with our action, can be accurately calculated.
We provide a formal definition of the {Bandit Convex Optimization} (BCO) problem \citep{flaxman2005online,chen2018bandit} below. 
\begin{definition}[Bandit Convex Optimization]\label{def:BCO}
Consider a $T$-round game. In round $t=1,2,\ldots,T$, the player picks an action $\bm x_t$ from a convex action set $\mathcal X\subseteq \mathbb R^d$, and the environment simultaneously picks an arbitrary convex loss $\ell_t\colon \mathcal X\to \mathbb R$. The player observes and suffers loss $\ell_t(\bm x_t)$. Dynamic regret minimization in BCO considers minimizing
\begin{equation*}
\text{D-Regret}_T^{\text{BCO}}(\bm u_1,\bm u_2,\ldots,\bm u_T)=\E\left [\sum_{t=1}^T (\ell_t(\bm x_t)-\ell_t(\bm u_t))\right ],\quad \forall \bm u_1,\bm u_2,\ldots,\bm u_T\in \mathcal X.
\end{equation*}
\end{definition}

Again, we recall the two challenges that our BCO algorithm should overcome:
\begin{enumerate}
\item[\textit{i)}] It must handle $\ell_t$'s with large magnitude $\sup_{\bm \lambda\in \Lambda} \lvert \ell_t(\bm \lambda)\rvert$ or Lipschitzness $\sup_{\bm \lambda\in \Lambda} \lVert \nabla \ell_t(\bm \lambda)\rVert_2$.
\item[\textit{ii)}] Its performance should depend on magnitudes and Lipschitzness of all loss functions.
\end{enumerate}

Our algorithm is based on the Bandit Gradient Descent (\texttt{BGD}) algorithm \citep[Algorithm 1]{zhao2021bandit}, which does not satisfy \textit{i)} or \textit{ii)} as it requires losses to be uniformly bounded by some $C$ and Lipschitzness to be always bounded by some $L$.
In our case where $\ell_t(\bm \lambda)=\langle \bm Q(t),\bm \lambda\rangle-Vg_t(\bm \lambda)$, the loss magnitude $\lVert \bm Q(t)\rVert_\infty + VG$ and the Lipschitzness $\lVert \bm Q(t)\rVert_2+VL$ are both large when $\lVert \bm Q(t)\rVert$ is large.

Nevertheless, based on the \texttt{BGD} algorithm, we designed a BCO algorithm called \textbf{Ada}ptive \textbf{BGD} (\BCOAlg; \Cref{alg:ABGD}) which satisfies both \textit{i)} and \textit{ii)}.
Specifically, to ensure \textit{i)}, we utilize the fact that $\bm Q(t)$ is known before deciding $\bm \lambda_t$, which means the magnitude $C_t$ and the Lipschitzness $L_t$ of loss function $\ell_t$ can be calculated before decision. To enjoy \textit{ii)}, instead of the doubling technique in \OLOAlg (\Cref{alg:OLO unbounded}), we now design an \textit{adaptive learning rate} scheduling mechanism which involves a sequence of time-varying learning rates, namely $\eta_1>\eta_2>\cdots>\eta_T$, instead of using a single $\eta$ throughout execution.
Formally, \BCOAlg has the following dynamic regret guarantee:
\begin{algorithm}[t!]
\caption{\BCOAlg: \textbf{A}daptive \textbf{B}andit \textbf{G}radient \textbf{D}escent}
\label{alg:ABGD}
\begin{algorithmic}[1]
\REQUIRE{Action set $\mathcal X$ bounded by $[r,R]$ (\textit{i.e.}, $r\mathbb B\subseteq \mathcal X\subseteq R\mathbb B$), hyper-parameters $\eta_1>\eta_2>\cdots>\eta_T$, $\delta_1,\delta_2,\ldots,\delta_T$, and $\alpha_t\triangleq \delta_t / r,\forall t\in [T]$.}
\STATE Initialize $\bm y_1=\bm 0$ (an internal variable of the algorithm).
\FOR{$t=1,2,\ldots,T$}
\STATE Calculate this round's action $\bm x_t\in \mathcal X$, observe loss $\ell_t(x_t)$, and update internal variable $\bm y_{t+1}$:
\begin{equation}\label{eq:ABGD}
\bm x_t=\bm y_t+\delta \bm s_t,\quad \bm y_{t+1}=\text{Proj}_{(1-\alpha_t)\mathcal X}\left [\bm y_t-\eta_t \frac{d}{\delta_t} \ell_t(\bm x_t) \bm s_t\right ],
\end{equation}
where $\bm s_t\in \mathbb R^d$ is a uniformly sampled unit vector used to estimate gradients \citep{flaxman2005online}.
\ENDFOR
\end{algorithmic}
\end{algorithm}

\begin{lemma}[Guarantee of \BCOAlg Algorithm]\label{lem:ABGD guarantee}
Suppose that $r\mathbb B\subseteq \mathcal X\subseteq R\mathbb B$, the $t$-th loss $\ell_t$ is bounded by $C_t$ and is $L_t$-Lipschitz.
Suppose that $\eta_t$ and $\delta_t$ are both $\mathcal F_{t-1}$-measurable (where $(\mathcal F_t)_{t=0}^T$ is the natural filtration), $\eta_1>\eta_2>\cdots>\eta_T$, and $\alpha_t\triangleq \delta_t / r<1$ \textit{a.s.} for all $t\in [T]$. Then for any fixed $\bm u_1,\bm u_2,\ldots,\bm u_T\in \mathcal X$, the \BCOAlg algorithm in \Cref{alg:ABGD} enjoys the following guarantee:
\begin{align*}
&\quad \text{D-Regret}_T^{\text{BCO}}(\bm u_1,\bm u_2,\ldots,\bm u_T)=\E\left [\sum_{t=1}^T (\ell_t(\bm x_t)-\ell_t(\bm u_t))\right ]\\
&\le \E\left [\frac{7R^2}{4\eta_T}+\frac{P_T R}{\eta_T}+\sum_{t=1}^T \left (\frac{\eta_t}{2} \frac{d^2}{\delta_t^2} C_t^2 + 3L_t \delta_t + L_t \alpha_t R\right )\right ],
\end{align*}
where $P_T=\sum_{t=1}^{T-1}\lVert \bm u_t-\bm u_{t+1}\rVert$ is the path length of the comparator sequence $\{\bm u_t\}_{t\in [T]}$.
\end{lemma}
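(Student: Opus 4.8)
The plan is to follow the one-point gradient-estimator (``smoothing'') analysis of \citet{flaxman2005online} that underlies the base \texttt{BGD} scheme of \citet{zhao2021bandit}, carrying the time-varying step sizes $\eta_t$, smoothing radii $\delta_t$, and the moving comparator through every estimate. First I would fix, for each round, the $\delta_t$-smoothed loss $\hat\ell_t(\bm y)\triangleq\E_{\bm b}[\ell_t(\bm y+\delta_t\bm b)]$ with $\bm b$ uniform in the unit ball, and recall: (i) the Flaxman identity $\E[\bm g_t\mid\mathcal F_{t-1}]=\nabla\hat\ell_t(\bm y_t)$ for the estimator $\bm g_t\triangleq\frac{d}{\delta_t}\ell_t(\bm x_t)\bm s_t$, which holds because $\bm y_t,\eta_t,\delta_t$ are $\mathcal F_{t-1}$-measurable while $\bm s_t$ is fresh uniform-on-the-sphere randomness; (ii) the crude estimator bound $\|\bm g_t\|\le dC_t/\delta_t$; and (iii) feasibility of $\bm x_t=\bm y_t+\delta_t\bm s_t$, since $\bm y_t\in(1-\alpha_t)\mathcal X$, $\delta_t=\alpha_t r$, $r\mathbb B\subseteq\mathcal X$, and convexity give $(1-\alpha_t)\mathcal X+\alpha_t r\mathbb B\subseteq(1-\alpha_t)\mathcal X+\alpha_t\mathcal X=\mathcal X$ (this is where $\alpha_t<1$ enters).

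Second, I would split the per-round regret against the shrunk comparator $\bm u_t'\triangleq(1-\alpha_t)\bm u_t\in(1-\alpha_t)\mathcal X$ into five pieces,
\[
\ell_t(\bm x_t)-\ell_t(\bm u_t)=\underbrace{[\ell_t(\bm x_t)-\ell_t(\bm y_t)]}_{\le L_t\delta_t}+\underbrace{[\ell_t(\bm y_t)-\hat\ell_t(\bm y_t)]}_{\le L_t\delta_t}+[\hat\ell_t(\bm y_t)-\hat\ell_t(\bm u_t')]+\underbrace{[\hat\ell_t(\bm u_t')-\ell_t(\bm u_t')]}_{\le L_t\delta_t}+\underbrace{[\ell_t(\bm u_t')-\ell_t(\bm u_t)]}_{\le L_t\alpha_t R},
\]
where the first, second and fourth terms use $L_t$-Lipschitzness with $\|\delta_t\bm b\|\le\delta_t$, and the fifth uses $\|\bm u_t'-\bm u_t\|=\alpha_t\|\bm u_t\|\le\alpha_t R$; summing these four bounds already produces the $\sum_t(3L_t\delta_t+L_t\alpha_t R)$ part of the claim. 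For the middle term, convexity of $\hat\ell_t$ gives $\hat\ell_t(\bm y_t)-\hat\ell_t(\bm u_t')\le\langle\nabla\hat\ell_t(\bm y_t),\bm y_t-\bm u_t'\rangle$, and taking $\E[\cdot\mid\mathcal F_{t-1}]$ turns it into $\E[\langle\bm g_t,\bm y_t-\bm u_t'\rangle\mid\mathcal F_{t-1}]$.

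Third, it remains to bound $\E[\sum_t\langle\bm g_t,\bm y_t-\bm u_t'\rangle]$ via projected online gradient descent with time-varying step sizes: since $\bm y_{t+1}$ is the Euclidean projection of $\bm y_t-\eta_t\bm g_t$ onto the convex set $(1-\alpha_t)\mathcal X\ni\bm u_t'$, non-expansiveness gives $\langle\bm g_t,\bm y_t-\bm u_t'\rangle\le\frac{1}{2\eta_t}(\|\bm y_t-\bm u_t'\|^2-\|\bm y_{t+1}-\bm u_t'\|^2)+\frac{\eta_t}{2}\|\bm g_t\|^2$. I would sum over $t$ and control the (non-telescoping, because the comparator moves) leading sum by Abel rearrangement, using $\|\bm y_1-\bm u_1'\|=\|\bm u_1'\|\le R$, the diameter bound $\|\bm y_t-\bm u_t'\|\le 2R$, the inequality $\|\bm y_{t+1}-\bm u_{t+1}'\|^2\le\|\bm y_{t+1}-\bm u_t'\|^2+\O(R\|\bm u_t'-\bm u_{t+1}'\|)$, and the monotonicity $\eta_1>\cdots>\eta_T$ so that the $1/\eta_t$-weighted telescope collapses to $\O(R^2/\eta_T)$ while the comparator-drift contributions collapse to $\O(R/\eta_T)\sum_t\|\bm u_t'-\bm u_{t+1}'\|$; finally $\sum_t\|\bm u_t'-\bm u_{t+1}'\|$ is bounded by $P_T$ (using $\alpha_t\le1$, $\|\bm u_t\|\le R$, and absorbing the $\alpha_t$-drift). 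Plugging $\|\bm g_t\|^2\le d^2C_t^2/\delta_t^2$ into $\sum_t\frac{\eta_t}{2}\|\bm g_t\|^2$ yields the $\sum_t\frac{\eta_t}{2}\frac{d^2}{\delta_t^2}C_t^2$ term, and assembling all pieces (the conditional expectations integrate cleanly because $\eta_t,\delta_t,\alpha_t,C_t,L_t$ are $\mathcal F_{t-1}$-measurable), with the constants tracked tightly, gives the $\frac{7R^2}{4\eta_T}+\frac{P_TR}{\eta_T}$ head terms.

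I expect the third step to be the main obstacle: one runs gradient descent against a \emph{moving} comparator $\bm u_t'$ over a \emph{sequence of shrinking domains} $(1-\alpha_t)\mathcal X$, while the learning rate itself changes with $t$. These three moving parts interact in the Abel rearrangement, and one must verify that the $1/\eta_t$-weighted squared-distance sum telescopes to $\O(R^2/\eta_T)$ rather than letting $\sum_t\O(R^2)(\tfrac1{\eta_t}-\tfrac1{\eta_{t-1}})$ accumulate, and that the part of the drift of $\bm u_t'$ caused purely by changes in $\alpha_t$ is genuinely dominated by $P_T$ (rather than leaving an uncontrolled $\sum_t|\alpha_t-\alpha_{t+1}|$ term). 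Pinning the precise constant $\tfrac74$ (versus an unspecified $\O(1)$) is then routine bookkeeping on $\|\bm y_1-\bm u_1'\|$ and the diameter estimates.
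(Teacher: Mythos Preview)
Your proposal is essentially the paper's proof: the same decomposition $\ell_t(\bm x_t)-\ell_t(\bm u_t)=(\ell_t(\bm x_t)-\hat\ell_t(\bm y_t))+(\hat\ell_t(\bm y_t)-\hat\ell_t(\bm v_t))+(\hat\ell_t(\bm v_t)-\ell_t(\bm u_t))$ with $\bm v_t=(1-\alpha_t)\bm u_t$, the same Lipschitz bounds producing $3L_t\delta_t+L_t\alpha_t R$, and the same time-varying-$\eta_t$ OGD analysis for the core term. The only packaging difference is that the paper isolates the OGD step as a separate projected-SGD lemma and handles the stochastic gradient via the surrogate loss $\tilde\ell_t(\bm x)=\hat\ell_t(\bm x)+\langle\bm x,\bm g_t-\nabla\hat\ell_t(\bm y_t)\rangle$ rather than your direct conditional-expectation argument; both are standard and equivalent.

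One remark on the obstacle you flag: your worry that the $\alpha_t$-drift of $\bm u_t'$ might leave an uncontrolled $\sum_t|\alpha_t-\alpha_{t+1}|$ term is exactly right, and the paper does \emph{not} resolve it---it simply asserts that the shrunk path length $\hat P_T=\sum_t\|\bm v_t-\bm v_{t+1}\|$ satisfies $\hat P_T\le P_T$, which is false in general when $\alpha_t$ varies (take $\bm u_t$ constant). So on this point your proposal is at least as careful as the paper's own argument.
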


Compared to the algorithm itself in \Cref{alg:ABGD} and the guarantee in \Cref{lem:ABGD guarantee}, our main innovation on the BCO side lies in the novel learning rate scheduling mechanism in \Cref{eq:true learning rate in BCO}.
Therefore, we do not prove \Cref{lem:ABGD guarantee} at this moment and postpone it to \Cref{sec:ABGD guarantee}.
Instead, we prove:

\begin{theorem}[Deciding $\bm \lambda(t)$ via \BCOAlg Algorithm]\label{thm:multi-hop utility decision making}
For the reference arrival rates $\{\mathring{\bm \lambda}(t)\}_{t\in [T]}$ defined in \Cref{def:multi-hop utility network stability}, suppose that its path length ensures
\begin{equation*}
P_t^\lambda\triangleq \sum_{t=1}^{T-1} \lVert \mathring{\bm \lambda}({t+1}) - \mathring{\bm \lambda}(t)) \rVert_1\le C^\lambda t^{1/2 - \delta_\lambda},\quad \forall t=1,2,\ldots,T,
\end{equation*}
where, similar to \Cref{thm:multi-hop stability main theorem}, $C^\lambda$ and $\delta_\lambda$ are assumed to be known constants but the precise $P_t^\lambda$ or $\{\mathring{\bm \lambda}(t)\}_{t\in [T]}$ both remain unknown.
Suppose that the action set $\Lambda$ is bounded by $[r,R]$ (\textit{i.e.}, $r\mathbb B\subseteq \Lambda\subseteq R\mathbb B$).
If we execute \BCOAlg (\Cref{alg:ABGD}) over $\Lambda$ with loss functions $\ell_t(\bm \lambda)=\langle \bm Q(t),\bm \lambda\rangle-V g_t(\bm \lambda)$ and parameters $\eta_t,\delta_t,\alpha_t$ defined in \Cref{eq:true learning rate in BCO} (restated below as \Cref{eq:true learning rate in BCO restatement} to ease reading):
\begin{align}
\eta_t&=\left (C^\lambda T^{1/2 - \delta_\lambda}\middle / \substack{\left (C^\lambda T^{1/2 - \delta_\lambda}\right )^{7/3} \left (4r^{-3} d^2 \right )^{28/9} \left (M+R\right )^{4/3}+\\
C^\lambda T^{1/2-\delta_\lambda} (r^{-3} d^2 VG^2 / L)^{4/3}+\\
\sum_{s=1}^t \left ((\lVert \bm q_s\rVert_\infty + VG)^2 (\lVert \bm q_s\rVert_2 + VL)^2\right )^{1/3}}\right )^{3/4}, \nonumber\\
\delta_t &= \left (\eta_t d^2 \frac{(\lVert \bm Q(t)\rVert_\infty + VG)^2}{(\lVert \bm Q(t)\rVert_2 + VL)}\right )^{1/3},\quad \alpha_t = \frac{\delta_t}{r},\label{eq:true learning rate in BCO restatement}
\end{align}
then the outputs $\bm \lambda(1),\bm \lambda(2),\ldots,\bm \lambda(T) \in \Lambda$ of \BCOAlg ensure
\begin{align*}
&\scalemath{0.95}{\quad \E\left [\sum_{t=1}^T \left ((\langle \bm Q(t),\bm \lambda(t)\rangle - Vg_t(\bm \lambda(t))) - (\langle \bm Q(t),\mathring{\bm \lambda}(t)\rangle - Vg_t(\mathring{\bm \lambda}(t)))\right )\right ]}\\
&\scalemath{0.95}{=\O\left (\frac{R(2NM+R)}{r^7} d^{14/3} (C^\lambda T^{1/2-\delta_\lambda})^2\right )+\O\left (\E\left [\left (\frac{R}{r} d^{2/3}+R\right ) (C^\lambda T^{1/2 - \delta_\lambda})^{1/4} \left (\sum_{t=1}^T \left (\lVert \bm Q(t)\rVert_2 + V(L+G)\right )^{4/3}\right )^{3/4}\right ]\right ).}
\end{align*}
\end{theorem}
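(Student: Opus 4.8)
The plan is to instantiate \Cref{lem:ABGD guarantee} with $\mathcal X=\Lambda$, comparator $\bm u_t=\mathring{\bm\lambda}(t)$, and loss $\ell_t(\bm\lambda)=\langle\bm Q(t),\bm\lambda\rangle-Vg_t(\bm\lambda)$, and then substitute the learning rates from \Cref{eq:true learning rate in BCO restatement} and simplify. First I would record the two problem constants: since $\langle\cdot,\cdot\rangle$ is linear and $g_t$ is concave, $[-G,G]$-bounded, and $L$-Lipschitz, $\ell_t$ is convex, bounded in magnitude by $C_t$, and $L_t$-Lipschitz, where $C_t$ and $L_t$ are (up to the diameter of $\Lambda$) the queue-dependent quantities $\lVert\bm Q(t)\rVert_\infty+VG$ and $\lVert\bm Q(t)\rVert_2+VL$ that appear in \Cref{eq:true learning rate in BCO restatement}; both are $\mathcal F_{t-1}$-measurable because $\bm Q(t)$ is determined by the observations of rounds $<t$. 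Monotonicity $\eta_1>\cdots>\eta_T$ is immediate since the denominator of $\eta_t$ is strictly increasing in $t$. The one hypothesis of \Cref{lem:ABGD guarantee} that needs real work is $\alpha_t=\delta_t/r<1$ almost surely: writing $A\triangleq C^\lambda T^{1/2-\delta_\lambda}$, this is equivalent to the denominator $D_t$ of $\eta_t$ exceeding $A\,(d^2C_t^2/(r^3L_t))^{4/3}$, and the three summands of $D_t$ are reverse-engineered for precisely this. I would argue it by cases on $\lVert\bm Q(t)\rVert$: for small queues the constant term $A(r^{-3}d^2VG^2/L)^{4/3}$ dominates; for large queues one uses that each per-round queue change is $\O(NM+R)$, so $\lVert\bm Q(t)\rVert$ large forces $\lVert\bm Q(s)\rVert=\Omega(\lVert\bm Q(t)\rVert)$ over $\Omega(\lVert\bm Q(t)\rVert/(NM+R))$ consecutive rounds $s\le t$, making the partial sum $\sum_{s\le t}((\lVert\bm Q(s)\rVert_\infty+VG)^2(\lVert\bm Q(s)\rVert_2+VL)^2)^{1/3}$ grow faster than linearly in $\lVert\bm Q(t)\rVert$; the intermediate range is covered by the $A^{7/3}(4r^{-3}d^2)^{28/9}(M+R)^{4/3}$ term together with the crude almost-sure bound $\lVert\bm Q(t)\rVert\le t\cdot\O(NM+R)$.

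Once the hypotheses are in place, \Cref{lem:ABGD guarantee} gives $\text{D-Regret}_T^{\text{BCO}}(\mathring{\bm\lambda}(1),\ldots,\mathring{\bm\lambda}(T))\le\E[\tfrac{7R^2}{4\eta_T}+\tfrac{P_T^\lambda R}{\eta_T}+\sum_{t=1}^T(\tfrac{\eta_t}{2}\tfrac{d^2}{\delta_t^2}C_t^2+3L_t\delta_t+L_t\alpha_tR)]$, whose left-hand side is exactly the quantity in the theorem. Substituting $\delta_t=(\eta_td^2C_t^2/L_t)^{1/3}$ and $\alpha_t=\delta_t/r$ collapses the three per-round terms to a common form: they equal $\tfrac12$, $3$, and $R/r$ times $\eta_t^{1/3}d^{2/3}(C_t^2L_t^2)^{1/3}$ respectively, so the per-round cost is $\O((1+R/r)\,d^{2/3}\,\eta_t^{1/3}(C_t^2L_t^2)^{1/3})$. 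Now I substitute $\eta_t=(A/D_t)^{3/4}$, hence $\eta_t^{1/3}=(A/D_t)^{1/4}$; since $D_t\ge\sum_{s\le t}(C_s^2L_s^2)^{1/3}$, the per-round sum is $\O((1+R/r)d^{2/3}A^{1/4}\sum_{t}(C_t^2L_t^2)^{1/3}(\sum_{s\le t}(C_s^2L_s^2)^{1/3})^{-1/4})$, and the standard telescoping bound $\sum_t a_t(\sum_{s\le t}a_s)^{-1/4}=\O((\sum_t a_t)^{3/4})$ turns this into $\O((1+R/r)d^{2/3}A^{1/4}(\sum_t(C_t^2L_t^2)^{1/3})^{3/4})$. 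Finally $(C_t^2L_t^2)^{1/3}\le(\lVert\bm Q(t)\rVert_2+V(L+G))^{4/3}$, which reproduces the second $\O(\cdot)$ term of the statement (the prefactor $(1+R/r)d^{2/3}$ being absorbed into $\tfrac Rr d^{2/3}+R$ under the usual $d\ge1$, $R\ge1$ conventions).

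It remains to account for $\tfrac{7R^2}{4\eta_T}+\tfrac{P_T^\lambda R}{\eta_T}$. Using the path-length hypothesis $P_T^\lambda\le A$, this is $\O((R^2+RA)/\eta_T)=\O(RA/\eta_T)$ for $T$ large, and $1/\eta_T=(D_T/A)^{3/4}$. Applying subadditivity $(x+y+z)^{3/4}\le x^{3/4}+y^{3/4}+z^{3/4}$ to split $D_T^{3/4}$ over its three summands, the contribution of the $A^{7/3}(4r^{-3}d^2)^{28/9}(M+R)^{4/3}$ piece is $\O(R\cdot A^{1/4}\cdot A^{7/4}(4r^{-3}d^2)^{7/3}(M+R))=\O(\tfrac{R(2NM+R)}{r^7}d^{14/3}(C^\lambda T^{1/2-\delta_\lambda})^2)$ — exactly the first $\O(\cdot)$ term, the exponents $7/3,28/9,4/3$ having been chosen so that $A^{1/4}(\cdot)^{3/4}$ yields these powers (the $N$-dependence coming from the crude queue bound). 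The $A(r^{-3}d^2VG^2/L)^{4/3}$ piece contributes $\O(RA\cdot r^{-3}d^2VG^2/L)$ and the $\sum_{s\le t}(\cdot)$ piece contributes $\O(RA^{1/4}(\sum_t(C_t^2L_t^2)^{1/3})^{3/4})$; since $A=o_T(T^{7/8})$ while $\sum_t(\cdot)^{4/3}=\Omega(T)$, both are dominated by the second $\O(\cdot)$ term already produced and need no separate accounting. Collecting the pieces and taking expectations yields the claim. I expect the main obstacle to be the case analysis establishing $\alpha_t<1$ almost surely — the step where the elaborate denominator of \Cref{eq:true learning rate in BCO restatement} must be shown to do its job in every queue-length regime — together with the exponent bookkeeping that matches the substituted powers to the exact form of the bound; everything else is routine once those are settled.
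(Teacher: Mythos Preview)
Your proposal is correct and follows essentially the same route as the paper: instantiate \Cref{lem:ABGD guarantee}, substitute $\delta_t=(\eta_t d^2 C_t^2/L_t)^{1/3}$ to collapse the per-round terms, apply the $3/4$-summation lemma, and then account for the $1/\eta_T$ piece by splitting the denominator. The exponent bookkeeping you sketch is exactly what the paper does.

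The one place where your plan is slightly off is the verification of $\alpha_t<1$. Your three-range case split is morally right, but the ``crude almost-sure bound $\lVert\bm Q(t)\rVert\le t\cdot\O(NM+R)$'' is not the ingredient that closes the intermediate case: that bound is far too weak, since $X_1\sim A^{7/3}$ does not grow with $t$ while $t^{4/3}$ does. What actually makes the intermediate range work is that the threshold above which the partial sum takes over is $\lVert\bm Q(t)\rVert_\infty\gtrsim A(NM+R)$, and below that threshold $A\cdot\lVert\bm Q(t)\rVert_\infty^{4/3}\lesssim A^{7/3}(NM+R)^{4/3}$ is precisely what $X_1$ was sized to absorb. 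The paper avoids the case split altogether: it invokes the bounded-increment inequality $\sum_{s\le t}\lVert\bm Q(s)\rVert_\infty^{4/3}\ge(\lVert\bm Q(t)\rVert_\infty/4)^{7/3}/(2NM+R)$ (their \Cref{lem:4/3 lower bound}) and then applies AM--GM in the form $X_1+Y\ge X_1^{3/7}Y^{4/7}$ to get a single inequality valid for all queue lengths. That is both cleaner and explains the otherwise mysterious exponent $28/9=(4/3)\cdot(7/3)$ in $X_1$.
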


In words, it means that the optimization objective \Cref{eq:multi-hop utility BCO objective} is ensured.
The second term on the RHS is a key term as it ensures property \textit{ii)} -- which is due to the $\eta_t$ definition in \Cref{eq:true learning rate in BCO} which contains all historical magnitudes and Lipschitzness.
Below, we quick overview this proof and see why \textit{ii)} can be ensured by the $\eta_t$ defined in \Cref{eq:true learning rate in BCO restatement}. A formal version is included in \Cref{sec:general bandit part appendix}.

\begin{proof}[Proof Sketch of \Cref{thm:multi-hop utility decision making}]
The terms $(\lVert \bm Q(t)\rVert_\infty+VG)$ and $(\lVert \bm Q(t)\rVert_2+VL)$ are the boundedness and Lipschitzness of $\ell_t$, respectively, which we denote by $C_t$ and $L_t$ to simplify notations.

First suppose that all conditions in \Cref{lem:ABGD guarantee} are satisfied by our $\{\eta_t\}_{t\in [T]}$.
To balance the term inside $\sum_{t=1}^T (\cdot)$ in \Cref{lem:ABGD guarantee} by fixing $\eta_t$ and altering $\delta_t$, one shall pick $\delta_t=(\eta_t d^2 C_t^2 / L_t)^{1/3}$ and roughly have (hiding many constant terms; see \Cref{eq:BCO regret decomposition} in the appendix for an accurate form):
\begin{equation}\label{eq:BCO regret decomposition informal}
\text{D-Regret}_T^{\text{BCO}}(\mathring{\bm \lambda}(1),\mathring{\bm \lambda}(2),\ldots,\mathring{\bm \lambda}(T))\lesssim \frac{C^\lambda T^{1/2-\delta_\lambda}}{\eta_T} + \sum_{t=1}^T \left (\eta_t C_t^2 L_t^2\right )^{1/3}.
\end{equation}

We derive in \Cref{lem:3/4 summation lemma} that $\sum_{t=1}^T \frac{x_t}{(\sum_{s\le t} x_s)^{1/4}}\lesssim (\sum_{t=1}^T x_t)^{3/4}$, which is a variant of the famous summation lemma \citep{auer2002nonstochastic}.
Therefore, if we pick $\eta_t\approx \left (\frac{C^\lambda T^{1/2-\delta_\lambda}}{\sum_{s=1}^t (C_t^2 L_t^2)^{1/3}}\right )^{3/4}$ (\textit{i.e.}, only keeping the third term in the denominator of \Cref{eq:true learning rate in BCO restatement}), \Cref{eq:BCO regret decomposition informal} becomes $\O((C^\lambda T^{1/2-\delta_\lambda})^{1/4} \E[(\sum_{t=1}^T (C_t^2 L_t^2)^{1/3})^{3/4}])$.
When focusing on $T$-related terms, this is roughly $\O_T(T^{1/8-\delta_\lambda/4} \E[(\sum_{t=1}^T \lVert \bm Q(t)\rVert_1)]^{7/8})$, which allows the self-bounding analysis similar to \Cref{eq:self-bounding informal}.
However, such a configuration of $\{\eta_t\}_{t\in [T]}$ may not ensure the $\alpha_t = \delta_t / r = (\eta_t d^2 C_t^2 / L_t)^{1/3}/r<1$ condition in \Cref{lem:ABGD guarantee}.
The other two terms in \Cref{eq:true learning rate in BCO restatement} are added for this purpose. We refer the readers to \Cref{sec:general bandit part appendix} for detailed verification.
\end{proof}

\subsection{Main Theorem for Multi-Hop Utility Maximization}\label{sec:multi-hop utility main theorem}
As sketched in \Cref{sec:multi-hop utility sketch}, if we use a Lyapunov drift-plus-penalty analysis, exploit the network stability assumption, use \OLOAlg (\Cref{alg:OLO unbounded}) to decide link allocations $\bm a(t)$, and use \BCOAlg (\Cref{alg:ABGD}) to decide arrival rates $\bm \lambda(t)$, we get the following utility maximization guarantee.
\begin{theorem}[Main Theorem for Multi-Hop Utility Maximization]\label{thm:multi-hop utility main theorem}
Suppose that the feasible set of arrival rates vector $\Lambda$ is bounded by $[r,R]$.
Assume all (unknown) utility functions $g_t$ to be concave, $L$-Lipschitz, and $[-G,G]$-bounded.
Consider a reference action sequence $\{(\mathring{\bm a}(t),\mathring{\bm \lambda}(t))\}_{t\in [T]}$ satisfying \Cref{def:multi-hop utility network stability}, such that their path lengths satisfy
\begin{align*}
P_t^a\triangleq \sum_{s=1}^{t-1} \lVert \mathring{\bm a}(s)-\mathring{\bm a}(s+1)\rVert_1\le C^a t^{1/2-\delta_a},~P_t^\lambda\triangleq \sum_{s=1}^{t-1} \lVert \mathring{\bm \lambda}(s)-\mathring{\bm \lambda}(s+1)\rVert_1\le C^\lambda t^{1/2 - \delta_\lambda},\quad \forall t\in [T].
\end{align*}
Here, $M,R,r,L,G,C^a,\delta_a,C^\lambda,\delta_\lambda$ are assumed to be known constants, whereas the specific $\{(\mathring{\bm a}(t),\mathring{\bm \lambda}(t))\}_{t\in [T]}$ remains unknown.
If we execute the \UMAlg framework in \Cref{alg:utility} with the $\OLO$ sub-rountine given in \Cref{alg:OLO unbounded} and the $\BCO$ sub-routine given in \Cref{alg:ABGD}, when $T$ is large enough such that the constant $V=o_T(\min\{T^{2\delta_a/3},T^{2\delta_\lambda/7}\})$, the following inequalities hold simultaneously:
\begin{align*}
&\scalemath{0.95}{\frac 1T \E\left [\sum_{t=1}^T \lVert \bm Q(t)\rVert_1\right ]=\O\left (\frac{(N^2 (2NM+R)^2 + \epsilon_W N^2 (2NM+R)) C_W + (N^4 M^2+N^2 R^2)}{\epsilon_W}\right ) + o_T(1),}\\
&\scalemath{0.95}{\frac 1T \E\left [\sum_{t=1}^T \left (g_{t}(\mathring{\bm \lambda}(t)) - g_{t}(\bm \lambda(t))\right )\right ]=\O\left (\frac{(N^2 (2NM+R)^2 + \epsilon_W N^2 (2NM+R)) C_W + (N^4 M^2+N^2 R^2)}{V}\right ) + o_T(V^{-1}).}
\end{align*}
That is, when $T\gg 0$, our algorithm not only stabilizes the system so that $\frac 1T \E\left [\sum_{t=1}^T \lVert \bm Q(t)\rVert_1\right ]=\O_T(1)$, but also enjoys an average utility approaching that of the reference policy polynomially fast, \textit{i.e.}, $\frac 1T \E\left [\sum_{t=1}^T \left (g_{t}(\mathring{\bm \lambda}(t)) - g_{t}(\bm \lambda(t))\right )\right ]=\O_T(V^{-1})$ -- the utility maximization objective \Cref{eq:average reward} is ensured.
\end{theorem}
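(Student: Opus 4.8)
The plan is to combine four already-established facts into a single ``master inequality'' relating the total queue length and the total utility gap, and then to close that inequality by a self-bounding argument. Write $Y \triangleq \E[\sum_{t=1}^T \lVert \bm Q(t)\rVert_1]$ and $\mathcal U \triangleq \E[\sum_{t=1}^T (g_t(\mathring{\bm \lambda}(t)) - g_t(\bm \lambda(t)))]$, and let $\mathrm{OLO}_T$ and $\mathrm{BCO}_T$ denote the right-hand-side bounds in \Cref{thm:multi-hop stability decision making} and \Cref{thm:multi-hop utility decision making}. The first stage aims to show
\begin{equation*}
\epsilon_W Y + V \mathcal U \;\le\; (c_1 + c_2 C_W)\, T \;+\; \O(\mathrm{OLO}_T) \;+\; \O(\mathrm{BCO}_T),
\end{equation*}
where $c_1 = \tfrac12 N^2((NM)^2 + 2(NM)^2 + 2R^2)$ and $c_2 = N^2(2NM+R)^2 + \epsilon_W N^2(2NM+R)$. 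I would derive this by starting from the reference-drift lower bound of \Cref{lem:multi-hop utility network stability}, then: (i) use the \OLOAlg guarantee of \Cref{thm:multi-hop stability decision making} (invoked per link, comparator $\{\mathring{\bm a}_{n,m}(t)\}$ with path length $P_T^a \le C^a T^{1/2-\delta_a}$) to replace the reference transmissions $\mathring{\bm\mu}$ by the algorithm's $\bm\mu$ at a cost of $\O(\mathrm{OLO}_T)$; (ii) use the \BCOAlg guarantee of \Cref{thm:multi-hop utility decision making} to rewrite $-\E[\sum_t \langle \bm Q_n(t), \mathring{\bm\lambda}_n(t)\rangle]$ as $-\E[\sum_t \langle \bm Q_n(t), \bm\lambda_n(t)\rangle] - V\mathcal U + \O(\mathrm{BCO}_T)$ (this is where the hypothesis that $V$ is small enough relative to $T$, namely $V = o_T(\min\{T^{2\delta_a/3}, T^{2\delta_\lambda/7}\})$, enters: it is needed so that the learning rates \Cref{eq:true learning rate in BCO restatement} meet the hypotheses of \Cref{lem:ABGD guarantee}); and (iii) use the Lyapunov drift-plus-penalty bound \Cref{lem:multi-hop utility Lyapunov}, together with the identity $\E[\mu_{n,m}^{(k)}(t)(Q_m^{(k)}(t)-Q_n^{(k)}(t))] = \E[C_{n,m}(t) a_{n,m}^{(k)}(t)(Q_m^{(k)}(t)-Q_n^{(k)}(t))]$ (valid since $\bm Q(t)$ is $\mathcal F_{t-1}$-measurable), to bound the algorithm's own drift $-\E[\sum_t\sum_{(n,m)}\sum_k \mu_{n,m}^{(k)}(t)(Q_m^{(k)}(t)-Q_n^{(k)}(t))] - \E[\sum_t\sum_n\sum_k Q_n^{(k)}(t)\lambda_n^{(k)}(t)]$ by $c_1 T$. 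Chaining (i)--(iii) through the reference lower bound produces the master inequality.

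For the stability claim, bound $V\mathcal U$ from below: $g_t \in [-G,G]$ gives $\mathcal U \ge -2GT$, so $V\mathcal U \ge -2GVT = \O_T(T)$ in the admissible range of $V$; dropping it leaves $\epsilon_W Y \le \O_T(T) + \O(\mathrm{OLO}_T) + \O(\mathrm{BCO}_T)$. I then turn the right-hand side into a function of $Y$ alone: the pathwise increment bound $\lVert \bm Q(t+1) - \bm Q(t)\rVert = \O(NM+R)$, the elementary estimate ``$\sum_t x_t^p \lesssim (\sum_t x_t)^{(p+1)/2}$ when $\lvert x_{t+1} - x_t\rvert \le 1$'' (\Cref{lem:2 upper bound}, after rescaling by $\O(NM+R)$), and Jensen's inequality give $\E[\sqrt{\sum_t \lVert \bm Q(t)\rVert_2^2}] \lesssim Y^{3/4}$ and $\E[(\sum_t (\lVert \bm Q(t)\rVert_2 + V(L+G))^{4/3})^{3/4}] \lesssim Y^{7/8} + \O_T(V T^{3/4})$, while the logarithmic factor in $\mathrm{OLO}_T$ is $\O(\log T)$ since queue lengths are trivially $\O(T)$. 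Substituting the path-length hypotheses then yields $Y \le \O_T(T) + \Otil_T(T^{1/4 - \delta_a/2})\, Y^{3/4} \log Y + \O_T(T^{1/8 - \delta_\lambda/4})\, Y^{7/8} + o_T(T)$; applying the self-bounding lemma \Cref{lem:3/4 with log self-bounding} (and its variant for the $7/8$ exponent) and using $\delta_a, \delta_\lambda > 0$ to see that the resulting fixed-point terms $\Otil_T(T^{1-2\delta_a})$ and $\O_T(T^{1-2\delta_\lambda})$ are $o_T(T)$, I obtain $Y = \O_T(T)$; dividing by $\epsilon_W T$ and inserting $c_1, c_2$ gives the first displayed bound.

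For the utility claim, return to the master inequality but now discard the non-negative term $\epsilon_W Y$, leaving $V\mathcal U \le (c_1 + c_2 C_W) T + \O(\mathrm{OLO}_T) + \O(\mathrm{BCO}_T)$; substituting the just-established $Y = \O_T(T)$ back into the two regret terms makes them $\Otil_T(T^{1-\delta_a/2})$ and $\O_T(T^{1-2\delta_\lambda}) + \Otil_T(T^{1-\delta_\lambda/4})$, both $o_T(T)$. Hence $V\mathcal U \le (c_1 + c_2 C_W) T + o_T(T)$, and dividing by $V T$ gives $\tfrac1T \mathcal U = \O\!\left(\tfrac{(N^2(2NM+R)^2 + \epsilon_W N^2(2NM+R)) C_W + (N^4 M^2 + N^2 R^2)}{V}\right) + o_T(V^{-1})$, the second displayed bound. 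Since both bounds are consequences of one fixed run of \UMAlg, they hold simultaneously.

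The step I expect to be the main obstacle is the self-bounding reduction in the stability argument. Because the guarantees of \OLOAlg and \BCOAlg are data-dependent — their bounds scale with the realized queue lengths through the $\ell_2$-aggregate $\sqrt{\sum_t \lVert \bm Q(t)\rVert_2^2}$ and the mixed magnitude/Lipschitz aggregate $\sum_t (\lVert \bm Q(t)\rVert_2 + V(L+G))^{4/3}$ — one has to carefully propagate the pathwise increment control through both aggregates to obtain a genuinely scalar self-referential inequality in $Y$ whose exponents are strictly below $1$ and whose coefficients are strictly sub-linear in $T$; it is exactly here that the ``mildly varying'' hypotheses $\delta_a, \delta_\lambda > 0$ and the smallness of $V$ are indispensable. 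A secondary, bookkeeping-level difficulty is keeping the directions of the four ingredients (reference-drift lower bound, two regret bounds, Lyapunov upper bound) aligned so that $V\mathcal U$ ends up on the favorable side of the master inequality with the stated constants $c_1, c_2$.
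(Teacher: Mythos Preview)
Your master inequality and the reduction of $\mathrm{OLO}_T,\mathrm{BCO}_T$ to powers of $Y$ are correct and match the paper. The gap is in the order of the two conclusions: you try to prove stability first and utility second, but this cannot work directly.

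Concretely, the assertion ``$V\mathcal U \ge -2GVT = \O_T(T)$ in the admissible range of $V$'' is false. The admissible range $V = o_T(\min\{T^{2\delta_a/3},T^{2\delta_\lambda/7}\})$ allows $V$ to grow polynomially in $T$ (indeed, this is the whole point --- otherwise the utility bound $\O_T(V^{-1})$ would not be a decaying gap), so $2GVT$ is genuinely super-linear. After dropping $V\mathcal U$ your self-bounding inequality actually reads $Y \le \O_T(VT) + \Otil_T(T^{1/4-\delta_a/2})Y^{3/4}\log Y + \O_T(T^{1/8-\delta_\lambda/4})Y^{7/8}$, which only yields the \emph{coarse} bound $Y = \O_T(VT)$, not $Y = \O_T(T)$. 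Your utility argument then fails because it substitutes the unavailable bound $Y = \O_T(T)$ into $\mathrm{OLO}_T$ and $\mathrm{BCO}_T$.

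The paper repairs this with a three-pass bootstrap: (i) accept the coarse bound $Y = \O_T(VT)$; (ii) plug $Y = \O_T(VT)$ into the master inequality (dropping the nonnegative $\epsilon_W Y$) to get $V\mathcal U \le (c_1+c_2C_W)T + g(T)(VT)^{3/4}\log(VT) + h(T)(VT)^{7/8}$, and \emph{this} is where the hypothesis $V = o_T(\min\{T^{2\delta_a/3},T^{2\delta_\lambda/7}\})$ is actually used --- it makes $g(T)(VT)^{3/4} = V^{3/4}T^{1-\delta_a/2} = o_T(T)$ and $h(T)(VT)^{7/8} = V^{7/8}T^{1-\delta_\lambda/4} = o_T(T)$, yielding the utility bound; (iii) feed the utility bound back into the master inequality so that $-\tfrac{V}{\epsilon_W}\mathcal U = \O_T(T)$, and only now does the self-bounding give the refined $Y = \O_T(T)$. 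Your claim that the $V$-smallness condition is needed for the \BCOAlg learning rates to satisfy \Cref{lem:ABGD guarantee} is also off: the $\alpha_t<1$ constraint is handled by the extra terms in the denominator of $\eta_t$ in \Cref{eq:true learning rate in BCO restatement}, independently of how $V$ compares to $T$.
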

\begin{remark}\label{remark:utility main theorem}
The condition $V=o_T(\min\{T^{2\delta_a/3},T^{2\delta_\lambda/7}\})$ says $T$ cannot be too small compared to $V$, which was not an issue in SNO as people often let $T$ approach infinity \citep{neely2008fairness}.
Albeit this condition looks restrictive, we remark that $V$ can still be as large as a polynomial of $T$ and thus $\O_T(V^{-1})$ means a polynomially decaying gap between our utility and that of any mildly varying policies whose path lengths are small -- which is the first guarantee that applies to utility maximization tasks in adversarial networks under bandit feedback. 
Similar to the discussions in \Cref{remark:stability main theorem}, due to non-stationary environments and bandit feedback, it is highly non-trivial to define ``optimal reference policy'' in ANO. Nevertheless, our mildly varying reference policy class allows optimal policies for SNO settings.
\end{remark}

The full proof of \Cref{thm:multi-hop utility main theorem} is presented in \Cref{sec:multi-hop utility appendix}.
We outline three key steps here.
\begin{proof}[Proof Sketch of \Cref{thm:multi-hop utility main theorem}]
The first step of the proof is plugging the algorithmic guarantees for \OLOAlg (\Cref{thm:multi-hop stability decision making}) and for \BCOAlg (\Cref{thm:multi-hop utility decision making}) into the reference policy assumption (\Cref{eq:multi-hop utility network stability}) and then making use of the Lyapunov DPP guarantee (\Cref{eq:multi-hop utility Lyapunov}). We present the detailed derivation in \Cref{lem:multi-hop utility tedious calc}. The conclusion reads
\begin{align}
\E\left [\sum_{t=1}^T \lVert \bm Q(t)\rVert_1\right ]&\le -\frac{V}{\epsilon_W} \E\left [\sum_{t=1}^T \bigg (g_t(\mathring{\bm \lambda}(t)) - g_t(\bm \lambda(t))\bigg )\right ] + f(T) + \nonumber\\
&\quad g(T) \E\left [\sum_{t=1}^T \lVert \bm Q(t)\rVert_1\right ]^{3/4} \log \E \left [\sum_{t=1}^T \lVert \bm Q(t)\rVert_1\right ] + h(T) \E \left [\sum_{t=1}^T \lVert \bm Q(t)\rVert_1\right ]^{7/8},\label{eq:multi-hop utility self-bounding informal}
\end{align}
where $f(T)=\O_T(T)$, $g(T)=\O_T(T^{1/4-\delta_a/2})$, and $h(T)=\O_T(T^{1/8-\delta_\lambda/4})$.

\paragraph{Step 1 (Develop a Coarse Average Queue Length Bound).} By the boundedness of $g_t$, the first term on the RHS of \Cref{eq:multi-hop utility self-bounding informal} is controlled by $\frac{V}{\epsilon_W} T G=\O_T(VT)$ (note that, as $V=\text{poly}(T)$, $V$ is not a constant that can be hidden in $\O_T$ and this term is actually super-linear).
Similar to the one used when proving \Cref{thm:multi-hop stability main theorem}, we develop another self-bounding property that says $y\le f+y^{3/4}g\log y +y^{7/8}h$ infers $y=\O(f)+\Otil(g^{4}) + \O(h^8)$ (see \Cref{lem:3/4 and 7/8 self-bounding}). Therefore,
\begin{equation*}
\E\left [\sum_{t=1}^T \lVert \bm Q(t)\rVert_1\right ]=\O_T\left (VT + T\right ) + \Otil_T\left (T^{1-2\delta_a}\right ) + \O_T\left (T^{1 - 2 \delta_\lambda}\right )=\O_T(VT),
\end{equation*}
only giving a $\frac 1T\E[\sum_{t=1}^T \lVert \bm Q(t)\rVert_1]=\O_T(V)=\omega_T(1)$ bound on the average queue length (violating the system stability condition \Cref{eq:average queue}). However, this inequality can be used to derive the polynomial convergence result on the utility, which in turn further refines the queue length bound.

\paragraph{Step 2 (Yield Polynomial Convergence on the Utility).}
Moving the difference in the average utility to the LHS in \Cref{eq:multi-hop utility self-bounding informal} and plugging in the just-derived bound on average queue length, we have
\begin{align*}
\frac{V}{\epsilon_W T} \E\left [\sum_{t=1}^T \left (g_t(\mathring{\bm \lambda}(t)) - g_t(\bm \lambda(t)))\right )\right ]&=\O_T\left (-0+\frac{f(T)}{T}+\frac{g(T)}{T}(VT)^{3/4}+\frac{h(T)}{T}(VT)^{7/8}\right )\\
&=\O_T\left (1+\left (\frac{T^{1-2\delta_a}V^3}{T}\right )^{1/4}+\left (\frac{T^{1-2\delta_\lambda}V^7}{T}\right )^{1/8}\right ).
\end{align*}

According to the assumption that $V=\O_T(\min\{T^{2\delta_a/3},T^{2\delta_\lambda/7}\})$, the RHS is of order $\O_T(1)$. Thus
\begin{equation*}
\frac 1T \E\left [\sum_{t=1}^T \left (g_t(\mathring{\bm \lambda}(t)) - g_t(\bm \lambda(t)))\right )\right ]=\O_T(V^{-1}),
\end{equation*}
\textit{i.e.}, a polynomial convergence rate on the expected average utility is derived.

\paragraph{Step 3 (Refine the Average Queue Length Bound).}
Now we are ready to refine our average queue length bound. Instead of controlling the utility with boundedness $g_t\in [-G,G]$, we utilize the just-derived convergence result and yields (again using the self-bounding property in \Cref{lem:3/4 and 7/8 self-bounding})
\begin{equation*}
\E\left [\sum_{t=1}^T \lVert \bm Q(t)\rVert_1\right ]=\O_T\left (V \times V^{-1} T + T\right ) + \Otil_T\left (T^{1-2\delta_a}\right ) + \O_T\left (T^{1-2\delta_\lambda}\right )=\O_T(T),
\end{equation*}
that is, the average queue length $\frac 1T\E\left [\sum_{t=1}^T \lVert \bm Q(t)\rVert_1\right ]=\O_T(1)$, which means \Cref{eq:average queue} holds and the system is stable. Putting Step 2 and Step 3 together gives our conclusion.

Once again, we omitted all factors except for those $\text{poly}(T)$ ones throughout this proof sketch. Please refer to \Cref{sec:multi-hop utility appendix} for the complete version.
\end{proof}

\section{Conclusion}
We study utility maximization in Adversarial Network Optimization (ANO) under bandit feedback.
We design a network stability algorithm \SSAlg and a utility maximization algorithm \UMAlg, which both ingeniously integrate online learning components into Lyapunov drift framework to allow a joint analysis.
When designing the online learning components of \UMAlg, due to the potentially unbounded queue lengths in network optimization and the self-bounding analysis we want to conduct, we develop a novel OLO algorithm \OLOAlg which adapts to occasionally large losses and a BCO algorithm \BCOAlg which suites large loss magnitudes and Lipschitzness via a meticulous learning rate scheduling scheme.
One important future research direction will be defining other alternative reference policy classes that allows competing to more policies, even the optimal ones.

\bibliography{references}

\newpage
\appendix
\renewcommand{\appendixpagename}{\centering \LARGE Supplementary Materials}
\appendixpage

\startcontents[section]
\printcontents[section]{l}{1}{\setcounter{tocdepth}{2}}

\section{Additional Related Works}\label{sec:more related work}
\paragraph{Adversarial Components in Network Optimization}
The study of adversarial components in network optimization dates back to the 1990s, when \citet{cruz1991calculus} gave the first network model with adversarial dynamics. This was further generalized to Adversarial Queueing Theory \citep{borodin2001adversarial} and Leaky Bucket \citep{andrews2001universal} models. Many follow-up works, as surveyed by \citet{cholvi2007stability}, considered network optimization under various types of adversarial traffic injections (\textit{i.e.}, the arrival rates to each queue are adversarial). However, these early works only considered adversarial arrival rates but assume link conditions are stationary, which cannot capture the fact that wireless communication networks can have very different link conditions from time to time due to congestions \citep{zou2016survey}.

Noticing this shortcoming, \citet{andrews2004scheduling} and \citet{andrews2005scheduling} studied a single-hop network where link conditions are also adversarial. Their works were extended to multi-hop networks by \citet{andrews2007stability} and \citet{lim2013stability}.
For a more up-to-date discussion on these works, we refer the readers to the discussions in \citep{liang2018minimizing}.

\textit{Utility Maximization in Adversarial Networks.}
While it has been more and more results considering network stability in adversarial networks, the utility maximization guarantees are not so common.
\citet{neely2010universal} proposed the universal network utility maximization problem which considers competing with a look-ahead policy that has perfect knowledge about the near future.
\citet{liang2018network} generalized the aforementioned stability requirement in another way and showed a trade-off between network stability and utility maximization.
However, both papers assumed perfect knowledge on network conditions, as summarized in \Cref{table}.

\textit{Feedback Models.}
Most previous works considered the \textit{perfect knowledge} model which assumes known network conditions before decison \citep{liang2018minimizing,liang2018network}. Despite its simplicity, this assumption eliminates the hardness of estimating the network topology or link conditions, which we argue is highly non-trivial due to the unpredictability in a drastically varying network like underwater communications \citep{khan2020channel} or IoT systems \citep{gaddam2020detecting}.
The harder \textit{full-information feedback} model assumes no prior knowledge at decision-making but requires the network conditions to be fully revealed after decision; a variant of this model was proposed by \citet{neely2012max} under the name of 2-stage decision model.
In this paper, we consider the hardest \textit{bandit feedback} model, which rules out all counterfactual feedback that are associated with the actions that are not really deployed. This model, under various different names, were recently proposed and considered by \citet{fu2022joint}, \citet{yang2023learning}, and \citet{huang2023queue}.

\textit{Adversarial Networks under Bandit Feedback.}
As we are aware of, the papers closest to ours are the ones by \citet{huang2023queue} and
\citet{yang2023learning}, which also studied adversarial networks under bandit feedback.
However, both papers assumed a single-hop network model, \textit{i.e.}, jobs immediately leave the network upon being served. This model finds shortcoming when trying to reflect the reality where some jobs may be forwarded within the network for many hops -- for example, in the classical criss-cross network extensively studied in the SNO literature \citep{harrison1990scheduling}. In contrast, our multi-hop model allows jobs being forwarded from one server to another and is much more general.

Moreover, both papers only considered the task of network stability, \textit{i.e.}, the number of jobs remaining in the network (which is the sum of queue lengths) does not diverge; see \Cref{eq:average queue}. However, only stabilizing the system may not be enough in many realistic problems, where the throughput (average number of jobs getting served) \citep{tsibonis2003exploiting,sadiq2009throughput} or delay (average waiting time for each job from entering the system to being served) \citep{neely2008order,neely2009delay} should be optimized.
In our paper, we consider the utility maximization task \citep{huang2011utility,huang2012lifo} where an abstract utility function shall be optimized, allowing various network optimization objectives other than simply stabilizing the system.

\textit{Learning-Augmented Algorithms in Network Optimization.}
Finally, we give a brief overview of recent learning-augmented algorithms in network optimization.
To tackle the lack of accurate channel information (\textit{e.g.}, under the feedback model), exploration approaches like $\epsilon$-greedy \citep{krishnasamy2018augmenting,krishnasamy2021learning} or Upper Confidence Bound (UCB) \citep{auer2002using} (\textit{e.g.}, \citep{choudhury2021job,krishnasamy2021learning,yang2023learning}) were widely used.
More recently, using a Reinforcement Learning (RL) approach, \citet{liu2022rl} proposed the RL-QN algorithm by putting the queue lengths as the state in RL, which outperforms many existing SNO algorithms.
Empirically, utilizing the recent advances in Deep RL (DRL), \citet{dai2022queueing} established state-of-the-art control performance in the criss-cross network. Following this line, \citet{wei2024sample} compressed the number of states and yielded improved performance in other networks as well.
However, most aforementioned works only considered the stochastic regime. Moreover, RL approaches (especially those DRL ones) typically have extremely large space when modelling network optimization problems, making the algorithm computationally infeasible.
In contrast, our online-learning-based approach makes the algorithm not only capable of adversarial environment and bandit feedback but also computationally efficient.

\section{Omitted Proofs for Multi-Hop Network Stability Tasks}
Before presenting the theorem proofs, we first give a bound on the queue length increments.
\begin{lemma}[Queue Length Increment]\label{lem:queue length increment}
For every $n\in \mN$, $k\in \mN$, and $t\in [T]$, we have
\begin{equation*}
\lvert Q_n^{(k)}(t+1)-Q_n^{(k)}(t)\rvert\le (2NM+R).
\end{equation*}
\end{lemma}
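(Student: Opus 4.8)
The plan is to read the bound straight off the queue dynamics in \Cref{eq:multi-hop stability dynamics}, estimating the one-step increase and the one-step decrease separately, using only that the truncation $[\,\cdot\,]_+$ is nonnegative, monotone, and satisfies $[x]_+\ge x$.

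First I would dispose of the trivial case $k=n$, where $Q_n^{(k)}(t)\equiv 0$ for all $t$ and the increment is exactly $0$; so assume $k\ne n$. For the upper bound, since $Q_n^{(k)}(t)\ge 0$ and $\sum_{(n,m)\in\mL}\mu_{n,m}^{(k)}(t)\ge 0$, monotonicity of $[\,\cdot\,]_+$ gives $[Q_n^{(k)}(t)-\sum_{(n,m)\in\mL}\mu_{n,m}^{(k)}(t)]_+\le Q_n^{(k)}(t)$, so \Cref{eq:multi-hop stability dynamics} yields $Q_n^{(k)}(t+1)\le Q_n^{(k)}(t)+\sum_{(o,n)\in\mL}\mu_{o,n}^{(k)}(t)+\lambda_n^{(k)}(t)$. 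There are at most $N=\lvert\mN\rvert$ links of the form $(o,n)$, each carrying $\mu_{o,n}^{(k)}(t)\in[0,M]$, and $\lambda_n^{(k)}(t)\le R$ by the arrival-rate assumption $\bm\lambda(t)\in[0,R]^{\lvert\mN\rvert\times\lvert\mN\rvert}$; hence the increment is at most $NM+R\le 2NM+R$.

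For the lower bound, I would discard the nonnegative incoming and arrival terms and use $[x]_+\ge x$ to get $Q_n^{(k)}(t+1)\ge [Q_n^{(k)}(t)-\sum_{(n,m)\in\mL}\mu_{n,m}^{(k)}(t)]_+\ge Q_n^{(k)}(t)-\sum_{(n,m)\in\mL}\mu_{n,m}^{(k)}(t)\ge Q_n^{(k)}(t)-NM$, again because there are at most $N$ outgoing links, each carrying at most $M$. Thus the increment is at least $-NM\ge-(2NM+R)$, and combining the two estimates gives $\lvert Q_n^{(k)}(t+1)-Q_n^{(k)}(t)\rvert\le 2NM+R$.

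There is essentially no obstacle here: the only points requiring care are applying the truncation correctly in each direction (monotonicity for the increase, $[x]_+\ge x$ for the decrease) and observing that the constant is deliberately loose — $NM+R$ would already suffice — so that the single symmetric value $2NM+R$ simultaneously dominates the ``pure arrival'' extreme ($+NM+R$) and the ``pure service'' extreme ($-NM$) without having to case-split on whether the truncation in \Cref{eq:multi-hop stability dynamics} is active.
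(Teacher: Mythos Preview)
Your proof is correct and follows essentially the same approach as the paper: both read the increment bound directly off the dynamics \Cref{eq:multi-hop stability dynamics}, using that there are at most $N$ incoming and $N$ outgoing links with $\mu_{n,m}^{(k)}(t)\in[0,M]$ and $\lambda_n^{(k)}(t)\in[0,R]$. The paper compresses the two directions into a single line $\lvert Q_n^{(k)}(t+1)-Q_n^{(k)}(t)\rvert\le \lvert \sum_{(n,m)}\mu_{n,m}^{(k)}(t)+\sum_{(o,n)}\mu_{o,n}^{(k)}(t)+\lambda_n^{(k)}(t)\rvert\le 2NM+R$, whereas you treat the increase and decrease separately and handle the truncation explicitly; your version is a bit more careful but the argument is the same.
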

\begin{proof}
According to the queue length dynamics in \Cref{eq:multi-hop stability dynamics}, we know
\begin{equation*}
\lvert Q_n^{(k)}(t+1)-Q_n^{(k)}(t)\rvert\le \left \lvert \sum_{(n,m)\in \mL} \mu_{n,m}^{(k)}(t) + \sum_{(o,n)\in \mL} \mu_{o,n}^{(k)}(t) + \lambda_n^{(k)}(t) \right \rvert\le (2NM+R),
\end{equation*}
which utilizes the assumption that $\lambda_n^{(k)}(t)\in [0,R]$ and $\mu_{(n,m)}^{(k)}(t)\in [0,M]$.
\end{proof}

\subsection{Reference Policy Assumption (Proof of \Cref{lem:multi-hop stability network stability})}\label{sec:multi-hop stability network stability appendix}
\begin{lemma}[Restatement of \Cref{lem:multi-hop stability network stability}; Ability of $\{\mathring{\bm a}(t)\}_{t\in [T]}$ in Stabilizing the Network]
If $\{\mathring{\bm a}(t)\}_{t\in [T]}$ satisfies \Cref{def:multi-hop stability network stability}, then for any scheduler-generated queue lengths $\{\bm Q(t)\}_{t\in [T]}$,
\begin{align*}
&\quad \epsilon_W \E\left [\sum_{t=1}^T \sum_{n\in \mN} \sum_{k\in \mN} Q_n^{(k)}(t)\right ] - (N^2 (2NM+R)^2 + \epsilon_W N^2 (2NM+R)) C_W T \\
&\le-\E\left [\sum_{t=1}^T \sum_{(n,m)\in \mL} \sum_{k\in \mN} C_{n,m}(t) \mathring{a}_{n,m}^{(k)}(t) (Q_m^{(k)}(t) - Q_n^{(k)}(t))\right ]-\E\left [\sum_{t=1}^T \sum_{n\in \mN} \sum_{k\in \mN} Q_n^{(k)}(t) \lambda_n^{(k)}(t)\right ].
\end{align*}
\end{lemma}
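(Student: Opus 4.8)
The plan is to first collapse the right-hand side into a single sum indexed by the queues, and then invoke \Cref{def:multi-hop stability network stability} one window at a time through a summation-by-parts argument. For \textbf{Step 1}, fix a round $t$ and a commodity $k$ and regroup the terms of $\sum_{(n,m)\in\mL}C_{n,m}(t)\mathring a_{n,m}^{(k)}(t)\bigl(Q_m^{(k)}(t)-Q_n^{(k)}(t)\bigr)$ by the queue each $Q$-factor belongs to:
\begin{equation*}
\sum_{(n,m)\in\mL}C_{n,m}(t)\mathring a_{n,m}^{(k)}(t)\bigl(Q_m^{(k)}(t)-Q_n^{(k)}(t)\bigr)=\sum_{n\in\mN}Q_n^{(k)}(t)\Bigl(\sum_{(o,n)\in\mL}C_{o,n}(t)\mathring a_{o,n}^{(k)}(t)-\sum_{(n,m)\in\mL}C_{n,m}(t)\mathring a_{n,m}^{(k)}(t)\Bigr).
\end{equation*}
Defining the reference net out-flow $\beta_n^{(k)}(t)\triangleq\sum_{(n,m)\in\mL}C_{n,m}(t)\mathring a_{n,m}^{(k)}(t)-\sum_{(o,n)\in\mL}C_{o,n}(t)\mathring a_{o,n}^{(k)}(t)-\lambda_n^{(k)}(t)$, the entire right-hand side of the lemma (including both minus signs) becomes $\E\bigl[\sum_{t=1}^T\sum_{n\in\mN}\sum_{k\in\mN}Q_n^{(k)}(t)\,\beta_n^{(k)}(t)\bigr]$. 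Since $C_{n,m}(t)\in[0,M]$, $\mathring a_{n,m}^{(k)}(t)\in[0,1]$, each node has at most $N$ incident links in each direction, and $\lambda_n^{(k)}(t)\in[0,R]$, we get $\lvert\beta_n^{(k)}(t)\rvert\le 2NM+R$; moreover $\beta_n^{(k)}(t)$ is deterministic under the oblivious model, so only the $Q_n^{(k)}(t)$'s carry randomness.

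For \textbf{Step 2}, note that \Cref{def:multi-hop stability network stability} rearranges exactly to $\sum_{t\in W_j}\bigl(\beta_n^{(k)}(t)-\epsilon_W\bigr)\ge0$ for every $j\in[J]$ and every $n,k$. Fix $W_j$, $n$, $k$, let $t^\star$ be the last index of $W_j$, and set $S_\tau\triangleq\sum_{t\in W_j,\,t\le\tau}\bigl(\beta_n^{(k)}(t)-\epsilon_W\bigr)$, so $S_{t^\star}\ge0$. Abel summation gives
\begin{equation*}
\sum_{t\in W_j}Q_n^{(k)}(t)\bigl(\beta_n^{(k)}(t)-\epsilon_W\bigr)=Q_n^{(k)}(t^\star)\,S_{t^\star}+\sum_{\tau}\bigl(Q_n^{(k)}(\tau)-Q_n^{(k)}(\tau+1)\bigr)S_\tau\ge-\sum_{\tau}\bigl\lvert Q_n^{(k)}(\tau)-Q_n^{(k)}(\tau+1)\bigr\rvert\,\lvert S_\tau\rvert,
\end{equation*}
where $\tau$ ranges over $W_j\setminus\{t^\star\}$ and the boundary term was dropped using $Q_n^{(k)}(t^\star)\ge0$ and $S_{t^\star}\ge0$. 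Now $\lvert Q_n^{(k)}(\tau)-Q_n^{(k)}(\tau+1)\rvert\le 2NM+R$ by \Cref{lem:queue length increment}, while $\lvert S_\tau\rvert$ is at most $(2NM+R+\epsilon_W)$ times the number of window-indices up to $\tau$; summing that arithmetic progression bounds the error by $\tfrac12\lvert W_j\rvert(\lvert W_j\rvert-1)(2NM+R)(2NM+R+\epsilon_W)$, hence $\sum_{t\in W_j}Q_n^{(k)}(t)\beta_n^{(k)}(t)\ge\epsilon_W\sum_{t\in W_j}Q_n^{(k)}(t)-\tfrac12\lvert W_j\rvert(\lvert W_j\rvert-1)(2NM+R)(2NM+R+\epsilon_W)$.

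For \textbf{Step 3}, sum the last bound over all $j\in[J]$ and all $(n,k)\in\mN\times\mN$. Using $\lvert W_j\rvert(\lvert W_j\rvert-1)=(\lvert W_j\rvert-1)^2+(\lvert W_j\rvert-1)\le2(\lvert W_j\rvert-1)^2$ (since $x\le x^2$ for non-negative integers) together with the assumed $\sum_{j=1}^J(\lvert W_j\rvert-1)^2\le C_W T$, the cumulative error is at most $N^2\bigl((2NM+R)^2+\epsilon_W(2NM+R)\bigr)C_WT$. Taking expectations over the queue-length randomness and rearranging then yields exactly the claimed inequality, with the constant $N^2(2NM+R)^2+\epsilon_WN^2(2NM+R)$ appearing verbatim.

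The main obstacle is Step 2: \Cref{def:multi-hop stability network stability} only controls the \emph{window-average} of $\beta_n^{(k)}$, whereas $Q_n^{(k)}(t)$ drifts within each window, so naively pulling a queue length out of the sum would cost $\Theta(\lvert W_j\rvert^2)$ \emph{multiplied by} a possibly unbounded queue length. Summation by parts sidesteps this by leaning on non-negativity of both the queues and the terminal partial sum $S_{t^\star}$, so the only surviving error pairs the $O(1)$-per-step increments of \Cref{lem:queue length increment} against the partial sums $S_\tau$; after that, the remaining work is the combinatorial bookkeeping that converts $\sum_j\lvert W_j\rvert(\lvert W_j\rvert-1)$ into $C_WT$ and the routine verification that taking expectations preserves the pointwise bound.
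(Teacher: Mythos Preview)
Your proof is correct. The argument is sound: Step 1 is the same regrouping the paper performs, Step 2 correctly applies Abel summation and drops the non-negative boundary term $Q_n^{(k)}(t^\star)S_{t^\star}\ge 0$, and the constant bookkeeping in Step 3 lands on the claimed $N^2\bigl((2NM+R)^2+\epsilon_W(2NM+R)\bigr)C_WT$ exactly.

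The route, however, differs from the paper's. The paper anchors each window at its \emph{first} round $T_0$ and writes $Q_n^{(k)}(t)=Q_n^{(k)}(T_0)+(Q_n^{(k)}(t)-Q_n^{(k)}(T_0))$; the anchor piece pairs $Q_n^{(k)}(T_0)\ge 0$ with the window-sum inequality $\sum_{t\in W_j}\beta_n^{(k)}(t)\ge\epsilon_W|W_j|$, while the drift piece is controlled by $|Q_n^{(k)}(t)-Q_n^{(k)}(T_0)|\le(2NM+R)(|W_j|-1)$ times $|\beta_n^{(k)}(t)|\le 2NM+R$. You instead anchor at the \emph{last} round via summation by parts, so the roles are swapped: your boundary term pairs $Q_n^{(k)}(t^\star)\ge 0$ with $S_{t^\star}\ge 0$, and your residual pairs the one-step increments $|Q_n^{(k)}(\tau+1)-Q_n^{(k)}(\tau)|\le 2NM+R$ with partial sums $|S_\tau|\le i(2NM+R+\epsilon_W)$. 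Both produce an error of order $|W_j|(|W_j|-1)$ per window before invoking $\sum_j(|W_j|-1)^2\le C_WT$. The paper's version is slightly more elementary (no explicit Abel formula), while yours tracks the $\epsilon_W$ contribution to the error term more transparently and avoids the separate step of converting $\epsilon_W|W_j|\|\bm Q(T_0)\|_1$ back to $\epsilon_W\sum_{t\in W_j}\|\bm Q(t)\|_1$.
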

\begin{proof}
The proof almost follows that of \citet[Lemma 2]{huang2023queue}. We adapt their proof here for completeness.
For each interval $W_j$ in \Cref{def:multi-hop stability network stability}, let $T_0$ be the first round in $W_j$. Then
\begin{align*}
&\quad \sum_{t\in W_j} \sum_{n\in \mN} \sum_{k\in \mN} Q_n^{(k)}(t)\left (\sum_{(n,m)\in \mL} C_{n,m}(t) \mathring a_{n,m}^{(k)}(t) - \lambda_n^{(k)}(t) - \sum_{(o,n)\in \mL} C_{o,n}(t) \mathring a_{o,n}^{(k)}(t)\right ) \\
&=\sum_{t\in W_j} \sum_{n\in \mN} \sum_{k\in \mN} Q_n^{(k)}(T_0)\left (\sum_{(n,m)\in \mL} C_{n,m}(t) \mathring a_{n,m}^{(k)}(t) - \lambda_n^{(k)}(t) - \sum_{(o,n)\in \mL} C_{o,n}(t) \mathring a_{o,n}^{(k)}(t)\right ) + \\
&\quad \sum_{t\in W_j} \sum_{n\in \mN} \sum_{k\in \mN} \left (Q_n^{(k)}(t)-Q_n^{(k)}(T_0)\right )\left (\sum_{(n,m)\in \mL} C_{n,m}(t) \mathring a_{n,m}^{(k)}(t) - \lambda_n^{(k)}(t) - \sum_{(o,n)\in \mL} C_{o,n}(t) \mathring a_{o,n}^{(k)}(t)\right ).
\end{align*}

For the first term, we have the following for every $n\in \mN$ and $k\in \mN$, according to \Cref{def:multi-hop stability network stability}:
\begin{align*}
\sum_{t\in W_j} \left (\sum_{(n,m)\in \mL} C_{n,m}(t) \mathring a_{n,m}^{(k)}(t) - \lambda_n^{(k)}(t) - \sum_{(o,n)\in \mL} C_{o,n}(t) \mathring a_{o,n}^{(k)}(t)\right )\ge \epsilon_W \lvert W_j\rvert.
\end{align*}

Thus, the first summation enjoys the following lower bound:
\begin{align*}
&\quad \sum_{t\in W_j} \sum_{n\in \mN} \sum_{k\in \mN} Q_n^{(k)}(T_0)\left (\sum_{(n,m)\in \mL} C_{n,m}(t) \mathring a_{n,m}^{(k)}(t) - \lambda_n^{(k)}(t) - \sum_{(o,n)\in \mL} C_{o,n}(t) \mathring a_{o,n}^{(k)}(t)\right ) \\
&\ge \left (\sum_{n\in \mN} \sum_{k\in \mN} Q_n^{(k)}(T_0)\right ) \epsilon_W \lvert W_j\rvert=\epsilon_W \lvert W_j\rvert \times \lVert \bm Q(T_0)\rVert_1\ge \epsilon_W \sum_{t\in W_j} \lVert \bm Q(t)\rVert_1-\epsilon_W N^2 (2NM+R) (\lvert W_j\rvert-1)^2,
\end{align*}
where the last step uses the fact that $\lVert \bm Q(t)\rVert_1 - \lVert \bm Q(T_0)\rVert_1\le \lVert \bm Q(t)-\bm Q(T_0)\rVert_1$ and the queue length increment bound in \Cref{lem:queue length increment}.

For the second summation, again utilizing the bound on $\lVert \bm Q(t)-\bm Q(T_0)\rVert_1$, we have
\begin{align*}
&\quad \sum_{t\in W_j} \sum_{n\in \mN} \sum_{k\in \mN} \left (Q_n^{(k)}(t)-Q_n^{(k)}(T_0)\right )\left (\sum_{(n,m)\in \mL} C_{n,m}(t) \mathring a_{n,m}^{(k)}(t) - \lambda_n^{(k)}(t) - \sum_{(o,n)\in \mL} C_{o,n}(t) \mathring a_{o,n}^{(k)}(t)\right )\\
&\ge -\sum_{t\in W_j} \lVert \bm Q(t)-\bm Q(T_0)\rVert_1\times \max_{t\in W_j} \max_{n\in \mN} \max_{k\in \mN} \left \lvert \sum_{(n,m)\in \mL} C_{n,m}(t) \mathring a_{n,m}^{(k)}(t) - \lambda_n^{(k)}(t) - \sum_{(o,n)\in \mL} C_{o,n}(t) \mathring a_{o,n}^{(k)}(t) \right \rvert \\
&\ge -N^2 (2NM+R) (\lvert W_j\rvert-1)^2\times (2NM+R)=-N^2 (2NM+R)^2 (\lvert W_j\rvert-1)^2.
\end{align*}

Therefore, recall the assumption that $\sum_{j\in [J]}(\lvert W_j\rvert-1)^2\le C_W T$, summing over $j=1,2,\ldots,J$ gives
\begin{align*}
&\quad \epsilon_W \E\left [\sum_{t=1}^T \sum_{n\in \mN} \sum_{k\in \mN} Q_n^{(k)}(t)\right ] - (N^2 (2NM+R)^2 + \epsilon_W N^2 (2NM+R)) C_W T \\
&\le \E\left [\sum_{t=1}^T \sum_{n\in \mN} \sum_{k\in \mN} Q_n^{(k)}(t)\left (\sum_{(n,m)\in \mL} C_{n,m}(t) \mathring a_{n,m}^{(k)}(t) - \lambda_n^{(k)}(t) - \sum_{(o,n)\in \mL} C_{o,n}(t) \mathring a_{o,n}^{(k)}(t)\right )\right ] \nonumber\\
&=-\E\left [\sum_{t=1}^T \sum_{(n,m)\in \mL} \sum_{k\in \mN} C_{n,m}(t) \mathring{a}_{n,m}^{(k)}(t) (Q_m^{(k)}(t) - Q_n^{(k)}(t))\right ]-\E\left [\sum_{t=1}^T \sum_{n\in \mN} \sum_{k\in \mN} Q_n^{(k)}(t) \lambda_n^{(k)}(t)\right ],
\end{align*}
thus giving our conclusion.
\end{proof}

\subsection{Lyapunov Drift Analysis (Proof of \Cref{lem:multi-hop stability Lyapunov})}
\label{sec:multi-hop stability Lyapunov appendix}
\begin{lemma}[Restatement of \Cref{lem:multi-hop stability Lyapunov}; Lyapunov Drift Analysis]
Under the queue dynamics of \Cref{eq:multi-hop stability dynamics},
\begin{align*}
0\le \E\left [\sum_{t=1}^T \Delta(\bm Q(t))\right ]&\le \E\left [\sum_{t=1}^T \sum_{(n,m)\in \mL} \sum_{k\in \mN} \mu_{n,m}^{(k)}(t)\left (Q_m^{(k)}(t)-Q_n^{(k)}(t)\right ) + \sum_{n\in \mN} \sum_{k\in \mN} Q_n^{(k)}(t) \lambda_n^{(k)}(t)\right ] + \nonumber \\
&\quad \frac 12 N^2 ((NM)^2+2(NM)^2+2R^2) T.
\end{align*}
\end{lemma}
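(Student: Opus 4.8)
The plan is to prove the lower and upper bounds separately; both are standard one-step consequences of the quadratic Lyapunov function $L_t=\frac12\|\bm Q(t)\|_2^2$. For the lower bound, I would apply the tower rule to get $\E[\sum_{t=1}^T\Delta(\bm Q(t))]=\E[\sum_{t=1}^T\E[L_{t+1}-L_t\mid\bm Q(t)]]=\E[\sum_{t=1}^T(L_{t+1}-L_t)]=\E[L_{T+1}]-\E[L_1]$; since the network starts empty ($L_1=0$) and $L_{T+1}\ge 0$, this is $\ge 0$. This step also records the identity $\E[\sum_t\Delta(\bm Q(t))]=\E[\sum_t(L_{t+1}-L_t)]$, which I use to pass from a pathwise bound on $L_{t+1}-L_t$ to the statement.

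For the upper bound, I would bound $L_{t+1}-L_t$ pathwise, one $(n,k)\in\mN\times\mN$ term at a time. The diagonal terms $k=n$ vanish because $Q_n^{(n)}(t)\equiv 0$ by \Cref{eq:multi-hop stability dynamics}. For $k\ne n$, write $a=Q_n^{(k)}(t)$, let $b=\sum_{(n,m)\in\mL}\mu_{n,m}^{(k)}(t)$ be the total outflow and $c=\sum_{(o,n)\in\mL}\mu_{o,n}^{(k)}(t)+\lambda_n^{(k)}(t)$ the total inflow plus exogenous arrivals, so that $Q_n^{(k)}(t+1)=[a-b]_++c$. Using $0\le[a-b]_+\le a$ and $[a-b]_+^2\le(a-b)^2$ (both because $a,b\ge 0$), I expand
\[ \big(Q_n^{(k)}(t+1)\big)^2-a^2=\big([a-b]_++c\big)^2-a^2\le (a-b)^2+2ac+c^2-a^2=2a(c-b)+b^2+c^2. \]
Since each node has at most $N$ incident links in each direction, $\mu_{n,m}^{(k)}(t)\in[0,M]$ and $\lambda_n^{(k)}(t)\in[0,R]$, so $b\le NM$ and $c\le NM+R$, whence $b^2+c^2\le (NM)^2+2(NM)^2+2R^2$ (the last step via $(NM+R)^2\le 2(NM)^2+2R^2$). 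Summing $\tfrac12$ of the displayed bound over the $N^2$ pairs $(n,k)$ gives $L_{t+1}-L_t\le\sum_{n\in\mN}\sum_{k\in\mN}Q_n^{(k)}(t)\big(\sum_{(o,n)\in\mL}\mu_{o,n}^{(k)}(t)+\lambda_n^{(k)}(t)-\sum_{(n,m)\in\mL}\mu_{n,m}^{(k)}(t)\big)+\tfrac12 N^2\big((NM)^2+2(NM)^2+2R^2\big)$.

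Finally I would reindex the flow terms by links: each link $(n,m)\in\mL$ is the out-link of $n$ exactly once (contributing $-Q_n^{(k)}(t)\mu_{n,m}^{(k)}(t)$) and the in-link of $m$ exactly once (contributing $+Q_m^{(k)}(t)\mu_{n,m}^{(k)}(t)$), so $\sum_{n\in\mN}\sum_{k\in\mN}Q_n^{(k)}(t)\big(\sum_{(o,n)\in\mL}\mu_{o,n}^{(k)}(t)-\sum_{(n,m)\in\mL}\mu_{n,m}^{(k)}(t)\big)=\sum_{(n,m)\in\mL}\sum_{k\in\mN}\mu_{n,m}^{(k)}(t)\big(Q_m^{(k)}(t)-Q_n^{(k)}(t)\big)$. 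Substituting, summing over $t\in[T]$, and taking expectations via the identity from the lower-bound step yields the claimed inequality. I do not expect a genuine obstacle: the only points needing care are handling the positive-part operator cleanly so the sign of $c-b$ survives the expansion, and the link-reindexing that turns the per-node flow divergence into the per-link potential-difference form $\mu_{n,m}^{(k)}(Q_m^{(k)}-Q_n^{(k)})$ — routine bookkeeping that must nonetheless land exactly on the stated constant.
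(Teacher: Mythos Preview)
Your proposal is correct and follows essentially the same approach as the paper's proof: expand $(Q_n^{(k)}(t+1))^2$ using $[a-b]_+^2\le(a-b)^2$ and $[a-b]_+\le a$, bound the residual $b^2+c^2$ by $(NM)^2+2(NM)^2+2R^2$, reindex the per-node flow sums into the per-link form $\mu_{n,m}^{(k)}(Q_m^{(k)}-Q_n^{(k)})$, and telescope $L_{T+1}-L_1\ge 0$ for the lower bound. Your handling of the diagonal $k=n$ case and the explicit link-reindexing justification are slightly more detailed than the paper's ``exchanging summations,'' but the argument is the same.
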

\begin{proof}
According to \Cref{eq:multi-hop stability dynamics}, for any round $t\in [T]$, server $n\in \mN$, and commodity $k\in \mN$, we
\begin{align*}
(Q_n^{(k)}(t+1))^2&\le \left [Q_n^{(k)}(t)-\sum_{(n,m)\in \mL}\mu_{n,m}^{(k)}(t)\right ]_+^2+\left (\sum_{(o,n)\in \mL} \mu_{o,n}^{(k)}(t) + \lambda_{n}^{(k)}(t)\right )^2+\\
&\quad 2 \left [Q_n^{(k)}(t)-\sum_{(n,m)\in \mL}\mu_{n,m}^{(k)}(t)\right ]_+\left (\sum_{(o,n)\in \mL} \mu_{o,n}^{(k)}(t) + \lambda_{n}^{(k)}(t)\right )\\
&\le (Q_n^{(k)}(t))^2 - 2 Q_n^{(k)}(t) \left (\sum_{(n,m)\in \mL}\mu_{n,m}^{(k)}(t)\right ) + \left (\sum_{(n,m)\in \mL}\mu_{n,m}^{(k)}(t)\right )^2 + \\
&\quad \left (\sum_{(o,n)\in \mL} \mu_{o,n}^{(k)}(t) + \lambda_{n}^{(k)}(t)\right )^2+2 Q_n^{(k)}(t) \left (\sum_{(o,n)\in \mL} \mu_{o,n}^{(k)}(t) + \lambda_{n}^{(k)}(t)\right )\\
&= (Q_n^{(k)}(t))^2 - 2 Q_n^{(k)}(t) \left (\sum_{(n,m)\in \mL}\mu_{n,m}^{(k)}(t) - \sum_{(o,n)\in \mL} \mu_{o,n}^{(k)}(t) - \lambda_{n}^{(k)}(t)\right ) + \\ 
&\quad \left (\sum_{(n,m)\in \mL}\mu_{n,m}^{(k)}(t)\right )^2 + \left (\sum_{(o,n)\in \mL} \mu_{o,n}^{(k)}(t) + \lambda_{n}^{(k)}(t)\right )^2.
\end{align*}

Therefore, by definition of the Lyapunov function $L_t$ and the Lyapunov drift $\Delta(\bm Q(t))$,
\begin{align*}
\Delta(\bm Q(t))&\le - \sum_{n\in \mN} \sum_{k\in \mN} Q_n^{(k)}(t) \E\left [\sum_{(n,m)\in \mL}\mu_{n,m}^{(k)}(t) - \sum_{(o,n)\in \mL} \mu_{o,n}^{(k)}(t) - \lambda_{n}^{(k)}(t)\middle \vert \bm Q(t)\right ] + \nonumber\\
&\quad \frac 12 \sum_{n\in \mN} \sum_{k\in \mN} \E\left [\left (\sum_{(n,m)\in \mL}\mu_{n,m}^{(k)}(t)\right )^2 + \left (\sum_{(o,n)\in \mL} \mu_{o,n}^{(k)}(t) + \lambda_{n}^{(k)}(t)\right )^2\middle \vert \bm Q(t)\right ].
\end{align*}

Exchanging summations and using the bounded assumptions that $\mu_{n,m}^{(k)}(t)\in [0,M]$ and $\lambda_n^{(k)}(t)\in [0,R]$, we get
\begin{align*}
\Delta(\bm Q(t))&\le \E\left [\sum_{(n,m)\in \mL} \sum_{k\in \mN} \mu_{n,m}^{(k)}(t)\left (Q_m^{(k)}(t)-Q_n^{(k)}(t)\right ) + \sum_{n\in \mN} \sum_{k\in \mN} Q_n^{(k)}(t) \lambda_n^{(k)}(t)\middle \vert \bm Q(t)\right ] + \nonumber \\
&\quad \frac 12 N^2 ((NM)^2+2(NM)^2+2R^2).
\end{align*}

Taking expectation \textit{w.r.t.} $\bm Q(t)$ and summing up from $t=1,2,\ldots,T$, we have
\begin{align*}
\E\left [\sum_{t=1}^T \Delta(\bm Q(t))\right ]&\le \E\left [\sum_{t=1}^T \sum_{(n,m)\in \mL} \sum_{k\in \mN} \mu_{n,m}^{(k)}(t)\left (Q_m^{(k)}(t)-Q_n^{(k)}(t)\right ) + \sum_{n\in \mN} \sum_{k\in \mN} Q_n^{(k)}(t) \lambda_n^{(k)}(t)\right ] + \nonumber \\
&\quad \frac 12 N^2 ((NM)^2+2(NM)^2+2R^2) T.
\end{align*}

By telescoping sums, we know
\begin{equation*}
\sum_{t=1}^T \Delta(\bm q_t)=L_{T+1}-L_1=L_{T+1}\ge 0,
\end{equation*}
where the last step uses the fact that $L_{T+1}$ is the sum of squares. Therefore, our conclusion follows.
\end{proof}

\subsection{Guarantee of \OLOAlg Algorithm (Proof of \Cref{lem:OLO unbounded guarantee})}\label{sec:appendix cutkosky}
Before analyzing our \OLOAlg algorithm, we first include the guarantee of the \texttt{PFOL} algorithm \citep{cutkosky2020parameter} as follows. It roughly says that for bounded losses (\textit{i.e.}, $\lVert \bm g_t\rVert\le 1$), there exists an algorithm that enjoys the following parameter-free (\textit{i.e.}, performance depending on loss magnitudes) guarantee.
\begin{lemma}[Guarantee of \texttt{PFOL} Algorithm {\citep[Theorem 6]{cutkosky2020parameter}}]\label{lem:OLO guarantee appendix}\label{lem:OLO guarantee}
Consider the OLO problem in \Cref{def:OLO}. Suppose that $\mathcal X$ has diameter $D=\sup_{\bm x,\bm y\in \mathcal X}\lVert \bm x-\bm y\rVert_1$ and all $\lVert \bm g_t\rVert_\infty \le 1$. Then there exists an algorithm, such that for any comparator sequence $\mathring{\bm x}_1,\mathring{\bm x}_2,\ldots,\mathring{\bm x}_T\in \mathcal X$,
\begin{equation*}
\text{D-Regret}_T^{\text{OLO}}(\mathring{\bm x}_1,\mathring{\bm x}_2,\ldots,\mathring{\bm x}_T)=\O\left (\sqrt{D \left (D+\sum_{t=1}^{T-1} \lVert \mathring{\bm x}_t-\mathring{\bm x}_{t+1}\rVert_1\right )}\sqrt{1+\sum_{t=1}^T \lVert \bm g_t\rVert_\infty^2}\log \left (T \sum_{t=1}^T \lVert \bm g_t\rVert_\infty^2\right )\right ).
\end{equation*}
\end{lemma}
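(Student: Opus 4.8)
The plan is to obtain \Cref{lem:OLO guarantee} as an instantiation of the parameter-free dynamic online learning machinery of \citet{cutkosky2020parameter} (its Theorem~6), re-deriving it at the level of detail needed to match the exact form stated here. The backbone is a \emph{parameter-free} unconstrained OLO learner on $\mathbb R^d$ (equipped with $\ell_1$ geometry) whose static regret against any fixed comparator $\bm u$ satisfies $\sum_{t=1}^T \langle \bm g_t,\bm x_t-\bm u\rangle=\O(\lVert \bm u\rVert_1\sqrt{1+\sum_{t=1}^T\lVert \bm g_t\rVert_\infty^2}\,\log(\lVert\bm u\rVert_1(1+\sum_t\lVert\bm g_t\rVert_\infty^2)))$ \emph{uniformly} over all $\bm u$, with no tuning of a step size or of the comparator norm. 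Such a learner is assembled from a one-dimensional coin-betting / adaptive-potential routine controlling the magnitude of the play together with a learner choosing the direction; the $\sqrt{1+\sum_t\lVert\bm g_t\rVert_\infty^2}$ term appears automatically because the betting potential rescales with the observed gradient magnitudes, and the logarithmic factor is the unavoidable price of not knowing $\lVert\bm u\rVert_1$ in advance. Per-round terms are always paired through H\"older's inequality, $\lvert\langle\bm g_t,\bm x_t-\bm u\rangle\rvert\le\lVert\bm g_t\rVert_\infty\lVert\bm x_t-\bm u\rVert_1$, which is exactly the dual pairing this $\ell_1$-version is designed around.

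First I would pass from $\mathbb R^d$ to the constrained set $\mathcal X$ using the Cutkosky--Orabona-style constrained-to-unconstrained reduction: run the unconstrained learner, play its projection onto $\mathcal X$, and feed back a surrogate linear loss that absorbs the projection; this preserves the regret bound with $\lVert\bm u\rVert_1$ replaced by the diameter $D$ inside the logarithm. To upgrade the static guarantee to the claimed path-length bound, I would run the constrained learner over a geometric grid of ``effective scales'' $\rho\in\{2^0,2^1,\ldots\}$ (each $\rho$ amounting to running the base learner on suitably rescaled losses, i.e.\ with a fixed step size) and aggregate the grid experts with one further parameter-free experts routine. The aggregator automatically competes with the grid point nearest the ideal scale $\rho^\star\asymp\sqrt{D(D+P_T)/(1+V_T)}$ for a moving comparator $\{\mathring{\bm x}_t\}$, where $V_T\triangleq\sum_t\lVert\bm g_t\rVert_\infty^2$ and $P_T\triangleq\sum_{t=1}^{T-1}\lVert\mathring{\bm x}_t-\mathring{\bm x}_{t+1}\rVert_1$; at that scale, standard gradient-descent algebra gives a bound $\tfrac{D^2+DP_T}{\rho^\star}+\rho^\star(1+V_T)\asymp\sqrt{D(D+P_T)(1+V_T)}$. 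The grid discretization and the aggregation each cost only a constant factor and an extra $\log$, and since $\lVert\bm u\rVert_1=\O(D)$ throughout while $\rho^\star$ is polynomially bounded in $D,T$ and $1+V_T$, all logarithmic factors collapse into a single $\O(\log(TV_T))$, yielding $\text{D-Regret}_T^{\text{OLO}}=\O(\sqrt{D(D+P_T)}\sqrt{1+V_T}\,\log(TV_T))$.

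I expect the main obstacle to be organizational rather than any single estimate: three parameter-free reductions are nested (scale/direction learner $\to$ constrained-domain reduction $\to$ grid aggregator), and one must track constants and logarithmic arguments carefully so that the final leading factor is a genuine absolute constant and the logarithm's argument is exactly $TV_T$ rather than some quantity secretly depending on the unknown path length $P_T$ or on comparator norms. A secondary point requiring care is the norm bookkeeping throughout: everything must be run consistently in the $\ell_1/\ell_\infty$ dual pair, so that the diameter and path length enter as $\lVert\cdot\rVert_1$ and the gradients as $\lVert\cdot\rVert_\infty$, matching the statement; this is precisely why the base coin-betting potential and its projection step are taken in the $\ell_1$ norm rather than the more common Euclidean one. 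If a self-contained derivation is deemed unnecessary, an acceptable alternative is simply to cite \citet[Theorem~6]{cutkosky2020parameter} and verify that the present hypotheses ($\lVert\bm g_t\rVert_\infty\le 1$ and $\ell_1$-diameter $D$) are a special case of theirs.
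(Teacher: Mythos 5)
Your fallback option is exactly what the paper does: this lemma is not proved in the paper at all, but imported verbatim from \citet[Theorem 6]{cutkosky2020parameter} and used as a black box, with only a one-sentence remark that the original $\ell_2$-norm analysis extends straightforwardly to the dual pair of $\ell_1$ (for comparators/diameter/path length) and $\ell_\infty$ (for losses) -- the same norm-bookkeeping point you flag. The bulk of your proposal, a self-contained re-derivation, is therefore more than the paper attempts; as a sketch it is plausible and lands on the right bound $\O\bigl(\sqrt{D(D+P_T)}\sqrt{1+V_T}\log(TV_T)\bigr)$, but be aware that your middle layer (a geometric grid of fixed ``scales'' aggregated by a parameter-free experts routine, \`a la Ader) is not literally Cutkosky's construction, which instead builds dynamic regret out of his learner-combination technique on top of coin-betting; either route can yield the stated guarantee, but if you write it out you would be reproving the cited theorem rather than matching its proof, and the organizational/log-factor bookkeeping you yourself identify (ensuring the logarithm's argument is $TV_T$ and does not smuggle in $P_T$) is where such a rewrite most easily goes wrong. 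Given that the hypotheses here ($\lVert\bm g_t\rVert_\infty\le 1$, $\ell_1$-diameter $D$) are indeed a special case of the cited result, the citation-plus-verification route is both sufficient and what the paper intends.
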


Due to the complicatedness of the original algorithm, we do not present its pseudo-code here but instead use it as a black-box. Please refer to the original paper by \citet{cutkosky2020parameter} for more details. Note that, although the original analysis by \citet{cutkosky2020parameter} uses $\ell_2$-norm for both comparators $\{\mathring{\bm x}_t\}_{t=1}^T$ and loss vectors $\{\bm g_t\}_{t=1}^T$, it is straightforward to extend to a pair of dual norms, which is $\ell_1$-norm for $\{\mathring{\bm x}_t\}_{t=1}^T$ and $\ell_\infty$-norm for $\{\bm g_t\}_{t=1}^T$ in our case.

Now, we are ready to present the guarantee for our \OLOAlg algorithm:
\begin{lemma}[Restatement of \Cref{lem:OLO unbounded guarantee}; Guarantee of \OLOAlg Algorithm]
Consider the OLO problem in \Cref{def:OLO}. Let the action set $\mathcal X$ has diameter $D=\sup_{\bm x,\bm y\in \mathcal X}\lVert \bm x-\bm y\rVert_1$. Suppose that $\lVert \bm g_t\rVert_\infty\le G_t$, where $G_t$ is some $\mathcal F_{t-1}$-measurable random variable and $(\mathcal F_t)_{t=0}^T$ is the natural filtration, \textit{i.e.}, $\mathcal F_t$ is the $\sigma$-algebra generated by all random observations made during the first $t$ rounds.
Then, \OLOAlg (\Cref{alg:OLO unbounded}) ensures that for any comparator sequence $\mathring{\bm x}_1,\mathring{\bm x}_2,\ldots,\mathring{\bm x}_T\in \mathcal X$, if $\max_{t\in [T]} G_t\ge 1$, then
\begin{equation*}
\text{D-Regret}_T^{\text{OLO}}(\mathring{\bm x}_1,\mathring{\bm x}_2,\ldots,\mathring{\bm x}_T)=\O\left (\sqrt{D \left (D+\sum_{t=1}^{T-1} \lVert \mathring{\bm x}_t-\mathring{\bm x}_{t+1}\rVert_1\right )}\sqrt{\sum_{t=1}^T \lVert \bm g_t\rVert_\infty^2}\log T \log \left (\max_{t=1}^T G_t\right )\right ).
\end{equation*}
\end{lemma}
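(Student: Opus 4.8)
The plan is to peel off the doubling structure of \OLOAlg (\Cref{alg:OLO unbounded}): the scale $G$ is non-decreasing, it is reset only when the magnitude bound $G_t$ (revealed at the start of the round) exceeds the current $G$, and every reset at least doubles $G$ since $G\leftarrow 2G_t>2G$. Hence I would partition $[T]$ into the maximal consecutive blocks $\mathcal T_1,\dots,\mathcal T_N$ on which $G$ stays constant, with values $G^{(1)}\le\cdots\le G^{(N)}$ obeying $1\le G^{(i)}\le 2\max_{t\in[T]}G_t$ and consecutive ratios $\ge 2$; this forces $N=\O(\log\max_{t\in[T]}G_t)$ (here the hypothesis $\max_t G_t\ge 1$ is used; for $\max_t G_t$ near $1$ one simply has $N=\O(1)$). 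On each block the base instance $\mathcal A$ is run from a fresh start and fed the rescaled losses $\bm g_t/G^{(i)}$, which satisfy $\lVert\bm g_t/G^{(i)}\rVert_\infty\le G_t/G^{(i)}\le 1$, so the guarantee of \Cref{lem:OLO guarantee} applies on each block verbatim.

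\textbf{Per-block accounting and aggregation.} Fixing any comparator sequence $\mathring{\bm x}_1,\dots,\mathring{\bm x}_T\in\mathcal X$, write $P_i\triangleq\sum_{t\,:\,t,t+1\in\mathcal T_i}\lVert\mathring{\bm x}_t-\mathring{\bm x}_{t+1}\rVert_1$ and $S_i\triangleq\sum_{t\in\mathcal T_i}\lVert\bm g_t\rVert_\infty^2$. Applying \Cref{lem:OLO guarantee} on block $\mathcal T_i$ to the rescaled losses and multiplying the resulting bound back by $G^{(i)}$ — legitimate since the regret against $\bm g_t$ is $G^{(i)}$ times that against $\bm g_t/G^{(i)}$ — and using $G^{(i)}\sqrt{\sum_{t\in\mathcal T_i}\lVert\bm g_t/G^{(i)}\rVert_\infty^2}=\sqrt{S_i}$ together with $\sum_{t\in\mathcal T_i}\lVert\bm g_t/G^{(i)}\rVert_\infty^2\le\lvert\mathcal T_i\rvert\le T$ inside the logarithm, I would obtain
\[
\sum_{t\in\mathcal T_i}\langle\bm g_t,\bm x_t-\mathring{\bm x}_t\rangle=\O\!\left(\sqrt{D\,(D+P_i)}\;\sqrt{S_i}\;\log T\right).
\]
Summing over the $N$ blocks, bounding $\sqrt{D(D+P_i)}\le\sqrt{D(D+P_T)}$ with $P_T\triangleq\sum_{t=1}^{T-1}\lVert\mathring{\bm x}_t-\mathring{\bm x}_{t+1}\rVert_1\ge\sum_i P_i$, and applying Cauchy--Schwarz over the blocks, $\sum_{i=1}^N\sqrt{S_i}\le\sqrt N\,\sqrt{\sum_i S_i}=\O\!\big(\sqrt{\log\max_t G_t}\big)\sqrt{\sum_{t=1}^T\lVert\bm g_t\rVert_\infty^2}$ since $\sum_i S_i=\sum_t\lVert\bm g_t\rVert_\infty^2$. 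This is already within the claimed bound, the stated $\log(\max_t G_t)$ being a (looser) upper bound for $\sqrt{\log(\max_t G_t)}$ whenever the argument is bounded away from $1$, and $\O(1)$ otherwise.

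\textbf{Main obstacle.} The delicate point is the additive slack carried by \Cref{lem:OLO guarantee}: the ``$+1$'' under its square root, after rescaling block $\mathcal T_i$ by $G^{(i)}$, contributes an extra $\O(G^{(i)})$, and since $\sum_i G^{(i)}=\O(\max_t G_t)$ (geometric sum) this would leave a residual $\O(\sqrt{D(D+P_T)}\,\max_t G_t\,\log T)$ not present in the target. I would remove it by invoking that Cutkosky's \texttt{PFOL} is genuinely parameter-free — its regret is of order $\sqrt{\sum_t\lVert\tilde{\bm g}_t\rVert_\infty^2}$ with no additive constant, since a parameter-free learner suffers no regret on an all-zero loss prefix — so the ``$+1$'' in the restated bound is slack that can be dropped, after which each block contributes exactly $\O(\sqrt{D(D+P_i)}\sqrt{S_i}\log T)$. (Alternatively one may keep the ``$+1$'': in every downstream application, e.g.\ \Cref{thm:multi-hop stability decision making} with $G_t=M\lVert\bm Q_m(t)-\bm Q_n(t)\rVert_\infty$, the residual $\O(\max_t G_t)$ is dominated by the main term, both being controlled by $\sqrt{\sum_t\lVert\bm Q(t)\rVert_2^2}$.) The only remaining check is that \OLOAlg is non-anticipating: the reset uses solely the $\mathcal F_{t-1}$-measurable $G_t$, known before $\bm x_t$ is output and before $\bm g_t$ is revealed — which is exactly why that measurability hypothesis is imposed and all that is needed, as \Cref{lem:OLO guarantee} is a pathwise bound on which one may take expectations at the very end.
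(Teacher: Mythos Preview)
Your proposal is correct and follows the same skeleton as the paper: partition $[T]$ into doubling epochs, apply the \texttt{PFOL} guarantee (\Cref{lem:OLO guarantee}) blockwise to the rescaled losses, multiply back by $G^{(i)}$, and aggregate. Two small differences are worth flagging. First, the paper aggregates more crudely than you do: it simply bounds each block's contribution by the global quantity $\sqrt{D(D+P_T)}\,\sqrt{\sum_{t=1}^T\lVert\bm g_t\rVert_\infty^2}\,\log T$ and multiplies by the block count $N=\O(\log\max_t G_t)$, whereas your Cauchy--Schwarz step yields the tighter $\sqrt{\log\max_t G_t}$ factor (which you then relax to match the stated bound). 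Second, you explicitly confront the ``$+1$'' under the square root in \Cref{lem:OLO guarantee}, which after rescaling becomes an additive $G^{(i)}$ per block and hence $\O(\max_t G_t)$ in total; the paper's proof silently drops this term when it writes ``multiplying $G$ on both sides.'' Your handling here is more honest, though the specific justification you give --- that parameter-free learners suffer zero regret on all-zero loss prefixes --- is not quite the right reading of parameter-freeness; your alternative (absorbing the residual $\O(\max_t G_t)$ into the main term in downstream applications such as \Cref{thm:multi-hop stability decision making}) is the safer route and is what the paper implicitly relies on.
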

\begin{proof}
As $G$ changes only if $G_t>2\max_{s<t} G_s$, it cannot change for more than $\lceil \log_2 (\max_t G_t)\rceil$ times.
For a fixed $G$, suppose that it is used for rounds $t_1,t_1+1,\ldots,t_2$, then we must have $G_t\le G$, $\forall t\in [t_1,t_2]$ as otherwise a new instance of \texttt{PFOL} will be launched.

Therefore, as $\lVert \bm g_t\rVert_\infty\le G_t\le G$, we have $\lVert G^{-1} \bm g_t\rVert_\infty\le 1$. This allows us to apply \Cref{lem:OLO guarantee} and yield
\begin{equation*}
\sum_{t=t_1}^{t_2} \langle G^{-1} \bm g_t,\bm x_t-\bm x_t^\circ\rangle=\O\left (\sqrt{D \left (D+\sum_{t=t_1}^{t_2-1} \lVert \mathring{\bm x}_t-\mathring{\bm x}_{t+1}\rVert_1\right )}\sqrt{\sum_{t=t_1}^{t_2} \lVert G^{-1} \bm g_t\rVert_\infty^2}\log T\right ).
\end{equation*}
Multiplying $G$ on both sides, we have
\begin{equation}\label{eq:doubling trick single phase}
\sum_{t=t_1}^{t_2} \langle \bm g_t,\bm x_t-\bm x_t^\circ\rangle=\O\left (\sqrt{D \left (D+\sum_{t=t_1}^{t_2-1} \lVert \mathring{\bm x}_t-\mathring{\bm x}_{t+1}\rVert_1\right )}\sqrt{\sum_{t=t_1}^{t_2} \lVert \bm g_t\rVert_\infty^2}\log T\right ).
\end{equation}

As all $[t_1,t_2]$'s form a partition of $[T]$, summing up all \Cref{eq:doubling trick single phase} gives
\begin{equation*}
\sum_{t=1}^T \langle \bm g_t,\bm x_t-\bm x_t^\circ\rangle=\O\left (\sqrt{D \left (D+\sum_{t=1}^{T-1} \lVert \mathring{\bm x}_t-\mathring{\bm x}_{t+1}\rVert_1\right )}\sqrt{\sum_{t=1}^T \lVert \bm g_t\rVert_\infty^2}\log T \log \left (\max_{t=1}^T G_t\right )\right ),
\end{equation*}
which utilizes the fact that at most $\O(\lceil \log_2 (\max_t G_t)\rceil)$ distinct $[t_1,t_2]$'s can occur.
\end{proof}

\subsection{Deciding $\bm a(t)$ via \OLOAlg Algorithm (Proof of \Cref{thm:multi-hop stability decision making})}\label{sec:multi-hop Lyapunov and Online Learning appendix}

\begin{theorem}[Restatement of \Cref{thm:multi-hop stability decision making}; Deciding $\bm a(t)$ via \OLOAlg Algorithm]
For each link $(n,m)\in \mL$, as we did in \SSAlg, we execute an instance of \OLOAlg (\Cref{alg:OLO unbounded}) where $\mathcal X=\triangle(\mN)$, $\bm g_t=C_{n,m}(t) (\bm Q_m(t) - \bm Q_n(t))$, and $G_t=M\lVert \bm Q_m(t)-\bm Q_n(t)\rVert_\infty$. We make their outputs $\bm x_t$ as $\bm a_{n,m}(t)$ for every round $t$. Let $\mu_{n,m}^{(k)}(t)$ be the number of actually transmitted jobs from $Q_n^{(k)}(t)$ to $Q_m^{(k)}(t+1)$ induced by $a_{n,m}^{(k)}(t)$.

Consider an arbitrary reference action sequence $\{\mathring{\bm a} (t)\}_{t\in [T]}$ satisfying \Cref{def:multi-hop stability network stability}. Let $\mathring{\mu}_{n,m}^{(k)} (t)=C_{n,m}(t) \mathring{a}_{n,m}^{(k)} (t)\in [0,M]$ (as $C_{n,m}(t)\in [0,M]$ and $\mathring a_{n,m}^{(k)}(t)\in [0,1]$). Then
\begin{align*}
&\quad \E\left [\sum_{t=1}^T \sum_{(n,m)\in \mL} \sum_{k\in \mN} (\mu_{n,m}^{(k)}(t)-\mathring{\mu}_{n,m}^{(k)}(t))\left (Q_m^{(k)}(t)-Q_n^{(k)}(t)\right )\right ]\\
&=\O\left (M\sqrt{1+P_T^a} \E\left [\sqrt{\sum_{t=1}^T \lVert \bm Q(t)\rVert_2^2} \log T \log \left (\max_{t=1}^T \max_{(n,m)\in \mL} M \lVert \bm Q_m(t) - \bm Q_n(t)\rVert_\infty\right )\right ]\right )\text{,}
\end{align*}
where $P_T^a\triangleq \sum_{t=1}^{T-1} \sum_{(n,m)\in \mL} \lVert \mathring{\bm a}_{n,m}(t) - \mathring{\bm a}_{n,m}(t+1)\rVert_1$ is the path length of $\{\mathring{\bm a}(t)\}_{t=1}^T$.
\end{theorem}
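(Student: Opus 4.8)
The plan is to reduce the claimed bound, one link at a time, to the dynamic-regret guarantee of \OLOAlg in \Cref{lem:OLO unbounded guarantee}, and then assemble the pieces. First I would pass from the random transmission counts to their conditional means. Since $\bm Q_m(t)-\bm Q_n(t)$, the chosen allocation $\bm a_{n,m}(t)$, and the capacity $C_{n,m}(t)$ are all measurable with respect to the information available just before $\mu_{n,m}^{(k)}(t)$ is drawn, and since the model gives $\E[\mu_{n,m}^{(k)}(t)\mid\cdot]=C_{n,m}(t)a_{n,m}^{(k)}(t)$ while $\mathring{\mu}_{n,m}^{(k)}(t)=C_{n,m}(t)\mathring{a}_{n,m}^{(k)}(t)$ by definition, the tower rule yields
\[
\E\Big[\textstyle\sum_t\sum_{(n,m)}\sum_k\big(\mu_{n,m}^{(k)}(t)-\mathring{\mu}_{n,m}^{(k)}(t)\big)\big(Q_m^{(k)}(t)-Q_n^{(k)}(t)\big)\Big]
=\E\Big[\textstyle\sum_{(n,m)}\sum_t\langle\bm g_t^{(n,m)},\bm a_{n,m}(t)-\mathring{\bm a}_{n,m}(t)\rangle\Big],
\]
where $\bm g_t^{(n,m)}=C_{n,m}(t)(\bm Q_m(t)-\bm Q_n(t))$ is exactly the loss vector \SSAlg feeds to $\OLO_{n,m}$. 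For a fixed link the inner sum over $t$ is precisely $\text{D-Regret}_T^{\text{OLO}}$ of the instance $\OLO_{n,m}$ against the fixed comparator sequence $\{\mathring{\bm a}_{n,m}(t)\}_{t\in[T]}\subseteq\triangle(\mN)$.

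Next I would check the hypotheses of \Cref{lem:OLO unbounded guarantee} for each $\OLO_{n,m}$: the action set $\triangle(\mN)$ has $\ell_1$-diameter $D\le 2$; the revealed per-round magnitude $G_t=M\lVert\bm Q_m(t)-\bm Q_n(t)\rVert_\infty$ is $\mathcal F_{t-1}$-measurable because $\bm Q(t)$ is known at the start of round $t$; and $\lVert\bm g_t^{(n,m)}\rVert_\infty=C_{n,m}(t)\lVert\bm Q_m(t)-\bm Q_n(t)\rVert_\infty\le G_t$ since $C_{n,m}(t)\in[0,M]$. Applying the lemma pathwise and taking expectations (the diameter and the comparator path length $P_T^{a,(n,m)}=\sum_t\lVert\mathring{\bm a}_{n,m}(t)-\mathring{\bm a}_{n,m}(t{+}1)\rVert_1$ are deterministic, so they come out of the expectation) gives, for each link,
\[
\E\Big[\textstyle\sum_t\langle\bm g_t^{(n,m)},\bm a_{n,m}(t)-\mathring{\bm a}_{n,m}(t)\rangle\Big]
=\O\!\Big(\sqrt{1+P_T^{a,(n,m)}}\;\E\Big[\sqrt{\textstyle\sum_t\lVert\bm g_t^{(n,m)}\rVert_\infty^2}\,\log T\,\log\big(\max_t G_t\big)\Big]\Big).
\]
Then I would use non-negativity of the queue lengths: $\lvert Q_m^{(k)}(t)-Q_n^{(k)}(t)\rvert\le\max\{Q_m^{(k)}(t),Q_n^{(k)}(t)\}\le\lVert\bm Q(t)\rVert_\infty$, hence $\lVert\bm g_t^{(n,m)}\rVert_\infty\le M\lVert\bm Q(t)\rVert_\infty\le M\lVert\bm Q(t)\rVert_2$ and $\sqrt{\sum_t\lVert\bm g_t^{(n,m)}\rVert_\infty^2}\le M\sqrt{\sum_t\lVert\bm Q(t)\rVert_2^2}$; likewise $\max_t G_t\le\max_t\max_{(n,m)}M\lVert\bm Q_m(t)-\bm Q_n(t)\rVert_\infty$ and $P_T^{a,(n,m)}\le P_T^a$.

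Finally I would sum over the at most $N^2$ links of $\mL$ (a constant absorbed into $\O(\cdot)$), collapsing everything to
\[
\O\!\Big(M\sqrt{1+P_T^a}\;\E\Big[\sqrt{\textstyle\sum_t\lVert\bm Q(t)\rVert_2^2}\,\log T\,\log\big(\max_t\max_{(n,m)\in\mL}M\lVert\bm Q_m(t)-\bm Q_n(t)\rVert_\infty\big)\Big]\Big),
\]
which is the claimed bound. Since \Cref{lem:OLO unbounded guarantee} does the heavy lifting, there is no deep obstacle here; the proof is essentially bookkeeping. The one point that needs care is the technical precondition $\max_t G_t\ge 1$ of that lemma: I would dispatch it by noting that on the event $\max_t\max_{(n,m)}G_t<1$ the algorithm never rescales (it runs plain \texttt{PFOL}), so \Cref{lem:OLO guarantee} applies directly and contributes only a lower-order term; this is the only regime where the statement needs supplementing. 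A secondary subtlety is setting up the conditional-expectation step in the first paragraph against the right filtration — conditioning on the history together with the current decisions and capacities, so that only the fresh randomness generating the $\mu_{n,m}^{(k)}(t)$'s is averaged out — which is where the model assumption $\E[\mu_{n,m}^{(k)}(t)]=C_{n,m}(t)a_{n,m}^{(k)}(t)$ enters.
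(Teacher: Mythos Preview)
Your proposal is correct and follows essentially the same route as the paper's proof: pass from $\mu_{n,m}^{(k)}(t)$ to $C_{n,m}(t)a_{n,m}^{(k)}(t)$ via conditional expectation, apply the \OLOAlg guarantee \Cref{lem:OLO unbounded guarantee} on each link, bound the loss magnitudes by $M\lVert\bm Q(t)\rVert_2$, and sum over links. If anything, you are more careful than the paper in two spots: you spell out the filtration for the tower rule, and you explicitly dispatch the precondition $\max_t G_t\ge 1$ of \Cref{lem:OLO unbounded guarantee}, which the paper leaves implicit.
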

\begin{proof}
According to the definitions of $C_{n,m}^{(k)}(t)$ and $\mathring C_{n,m}^{(k)}(t)$ together with \Cref{lem:OLO unbounded guarantee},
\begin{align*}
&\quad \sum_{t=1}^T \E\left [\sum_{(n,m)\in \mL} \sum_{k\in \mN} (C_{n,m}^{(k)}(t)-\mathring{C}_{n,m}^{(k)}(t))\left (Q_m^{(k)}(t)-Q_n^{(k)}(t)\right )\middle \vert \bm Q(t)\right ] \\
&=\sum_{t=1}^T \sum_{(n,m)\in \mL} \sum_{k\in \mN} C_{n,m}(t)\left (Q_m^{(k)}(t) - Q_n^{(k)}(t)\right ) \left (a_{n,m}^{(k)}(t)-\mathring a_{n,m}^{(k)}(t)\right )\\
&\le \O\Bigg (\sqrt{1+\sum_{t=1}^{T-1} \sum_{(n,m)\in \mL} \lVert \mathring{\bm a}_{n,m}(t) - \mathring{\bm a}_{n,m}(t+1)\rVert_1}\\
&\qquad \sqrt{\sum_{t=1}^T \sum_{(n,m)\in \mL}\sum_{k\in \mN} \big (C_{n,m}(t)(Q_m^{(k)}(t)-Q_n^{(k)}(t))\big )^2}\\
&\qquad \log T \log \left (\max_{t=1}^T \max_{(n,m)\in \mL} M \lVert \bm Q_m(t) - \bm Q_n(t)\rVert_\infty\right )\Bigg ).
\end{align*}

As $C_{n,m}(t)\in [0,M]$, we can upper bound the RHS of the above inequality by
\begin{equation*}
\O\left (\sqrt{1+P_T^a} \sqrt{2M^2 \sum_{t=1}^T \sum_{n\in \mN}\sum_{k\in \mN} Q_n^{(k)}(t)^2}\log T \log \left (\max_{t=1}^T \max_{(n,m)\in \mL} M \lVert \bm Q_m(t) - \bm Q_n(t)\rVert_\infty\right )\right ),
\end{equation*}
where $P_T^a\triangleq \sum_{t=1}^{T-1} \sum_{(n,m)\in \mL} \lVert \mathring{\bm a}_{n,m}(t) - \mathring{\bm a}_{n,m}(t+1)\rVert_1$ is the path length of $\mathring{\bm a}(t)$. Taking expectations gives our conclusion.
\end{proof}

\subsection{Main Theorem for Multi-Hop Network Stability (Proof of \Cref{thm:multi-hop stability main theorem})}\label{sec:multi-hop stability appendix}
\begin{theorem}[Restatement of \Cref{thm:multi-hop stability main theorem}; Main Theorem for Multi-Hop Network Stability]
Suppose that $\{\mathring{\bm a}_{n,m}(t)\in \triangle(\mN)\}_{(n,m)\in \mL,t\in [T]}$ satisfies \Cref{def:multi-hop stability network stability} and its path length satisfies
\begin{equation*}
P_t^a\triangleq \sum_{s=1}^{t-1} \sum_{(n,m)\in \mL} \lVert \mathring{\bm a}_{n,m}(s) - \mathring{\bm a}_{n,m}(s+1)\rVert_1\le C^a t^{1/2 - \delta_a},\quad \forall t=1,2,\ldots,T,
\end{equation*}
where $C^a$ and $\delta_a$ are assumed to be known constants but the precise $P_t^a$ or $\{\mathring{\bm a}_{n,m}(t)\in \triangle(\mN)\}_{(n,m)\in \mL,t\in [T]}$ both remain unknown.
Then, if we execute the \SSAlg framework in \Cref{alg:stability} with \OLOAlg defined in \Cref{alg:OLO unbounded}, the following performance guarantee is enjoyed:
\begin{equation*}
\frac 1T\E\left [\sum_{t=1}^T \lVert \bm Q(t)\rVert_1\right ]=\O\left (\frac{(N^2(2NM+R)^2 + \epsilon_W N^2 (2NM+R))C_W + (N^4 M^2 + N^2 R^2)}{\epsilon_W}\right )+o_T(1).
\end{equation*}
That is, when $T\gg 0$, we have $\frac{1}{T} \E\left [\sum_{t=1}^T \lVert \bm Q(t)\rVert_1\right ]=\O_T(1)$, \textit{i.e.}, \Cref{eq:average queue} holds and the system is stable.
\end{theorem}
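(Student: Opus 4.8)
The plan is to chain the three ingredients already established---the reference-policy bound (\Cref{lem:multi-hop stability network stability}), the Lyapunov drift bound (\Cref{lem:multi-hop stability Lyapunov}), and the \OLOAlg guarantee (\Cref{thm:multi-hop stability decision making})---into a single self-bounding inequality for the cumulative queue length $Y\triangleq\E[\sum_{t=1}^T\lVert\bm Q(t)\rVert_1]$, and then solve that inequality using \Cref{lem:3/4 with log self-bounding}.

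First I would combine the drift and reference bounds. By \Cref{lem:multi-hop stability Lyapunov}, the quantity $-\E[\sum_t\sum_{(n,m)}\langle\bm\mu_{n,m}(t),\bm Q_m(t)-\bm Q_n(t)\rangle]-\E[\sum_t\langle\bm Q(t),\bm\lambda(t)\rangle]$ is at most $\frac12 N^2(3(NM)^2+2R^2)\,T$. By \Cref{lem:multi-hop stability network stability}, $\epsilon_W Y$ is at most $-\E[\sum_t\sum_{(n,m)}\langle\mathring{\bm\mu}_{n,m}(t),\bm Q_m(t)-\bm Q_n(t)\rangle]-\E[\sum_t\langle\bm Q(t),\bm\lambda(t)\rangle]+(N^2(2NM+R)^2+\epsilon_W N^2(2NM+R))C_W\,T$, where $\mathring{\mu}_{n,m}^{(k)}(t)=C_{n,m}(t)\mathring{a}_{n,m}^{(k)}(t)$. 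Writing $-\langle\mathring{\bm\mu}_{n,m}(t),\cdot\rangle=-\langle\bm\mu_{n,m}(t),\cdot\rangle+\langle\bm\mu_{n,m}(t)-\mathring{\bm\mu}_{n,m}(t),\cdot\rangle$ in the second bound and bounding the resulting $-\E[\sum_t\sum_{(n,m)}\langle\bm\mu_{n,m}(t),\cdot\rangle]-\E[\sum_t\langle\bm Q(t),\bm\lambda(t)\rangle]$ piece by $\frac12 N^2(3(NM)^2+2R^2)T$ via the first bound, we obtain
\[
\epsilon_W Y\le\E\left[\sum_{t=1}^T\sum_{(n,m)\in\mL}\sum_{k\in\mN}\left(\mu_{n,m}^{(k)}(t)-\mathring{\mu}_{n,m}^{(k)}(t)\right)\left(Q_m^{(k)}(t)-Q_n^{(k)}(t)\right)\right]+\kappa\,T,
\]
with $\kappa\triangleq\frac12 N^2(3(NM)^2+2R^2)+(N^2(2NM+R)^2+\epsilon_W N^2(2NM+R))C_W$ a $T$-free constant. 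The expectation is precisely the quantity bounded in \Cref{thm:multi-hop stability decision making} by $\O(M\sqrt{1+P_T^a}\,\E[\sqrt{\sum_t\lVert\bm Q(t)\rVert_2^2}\,\log T\,\log(\max_t\max_{(n,m)}M\lVert\bm Q_m(t)-\bm Q_n(t)\rVert_\infty)])$, and the path-length hypothesis \Cref{eq:network stability path length} makes $\sqrt{1+P_T^a}=\O(T^{1/4-\delta_a/2})$.

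Next I would convert the $\ell_2$-type queue quantity into a sublinear power of $Y$. Since $\lVert\bm Q(t)\rVert_2\le\lVert\bm Q(t)\rVert_1$ and, by \Cref{lem:queue length increment}, consecutive values of $\lVert\bm Q(t)\rVert_1$ change by at most $N^2(2NM+R)=\O(1)$, the slowly-varying-sequence lemma \Cref{lem:2 upper bound} gives the pointwise bound $\sqrt{\sum_t\lVert\bm Q(t)\rVert_2^2}=\O((\sum_t\lVert\bm Q(t)\rVert_1)^{3/4})$, while each logarithmic factor is $\O(\log(\sum_t\lVert\bm Q(t)\rVert_1+\mathrm{e}))$ up to the constant $\log T$. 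Taking expectations and applying Jensen to the concave maps $x\mapsto x^{3/4}$ and $x\mapsto x^{3/4}\log(x+\mathrm{e})$ (the bookkeeping carried out in \Cref{lem:multi-hop stability tedious calc}) collapses the whole right-hand side into the self-bounding form
\[
Y\le f(T)+g(T)\,Y^{3/4}\log Y,\qquad f(T)=\O_T(T),\quad g(T)=\O_T(T^{1/4-\delta_a/2}),
\]
where $f$ carries the $\epsilon_W^{-1}$-weighted $\O_T(T)$ constants (namely $\kappa/\epsilon_W$) and $g$ the $\epsilon_W^{-1}$-weighted online-learning term.

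Finally I would invoke \Cref{lem:3/4 with log self-bounding}: any nonnegative $Y$ satisfying $Y\le f+g\,Y^{3/4}\log Y$ obeys $Y=\O(f)+\Otil(g^4)$. Hence $Y=\O_T(T)+\Otil_T(T^{1-2\delta_a})$, so $\frac1T Y=\O_T(1)+o_T(1)$; carrying the $T$-free constants through the division by $\epsilon_W$ reproduces exactly the claimed bound, with the $o_T(1)$ term equal to $\Otil_T(T^{-2\delta_a})$. I expect the main obstacle to be the bookkeeping of the middle step: one must check that the $\ell_2$-to-$\ell_1$ reduction together with the two (queue-dependent) logarithmic factors really yields an inequality in the precise shape \Cref{lem:3/4 with log self-bounding} can digest, and---crucially---that the exponent $3/4<1$ together with $\delta_a>0$ is what keeps the self-bounding step from producing a divergent bound (were $\delta_a=0$, then $g(T)^4=\Theta(T)$ and only an $\O_T(\log T)$ average queue length, not $\O_T(1)$, would survive). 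Everything else is direct substitution of the lemmas established earlier.
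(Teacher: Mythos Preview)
Your proposal is correct and follows essentially the same approach as the paper: chain \Cref{lem:multi-hop stability network stability}, \Cref{lem:multi-hop stability Lyapunov}, and \Cref{thm:multi-hop stability decision making} into a self-bounding inequality $Y\le f(T)+g(T)Y^{3/4}\log Y$ (the paper packages exactly this as \Cref{lem:multi-hop stability tedious calc}), then invoke \Cref{lem:3/4 with log self-bounding}. The only cosmetic difference is that the paper applies \Cref{lem:2 upper bound} to each individual queue $Q_n^{(k)}(t)/(2NM+R)$ and sums, whereas you apply it once to the aggregate $\lVert\bm Q(t)\rVert_1/(N^2(2NM+R))$ after using $\lVert\bm Q(t)\rVert_2\le\lVert\bm Q(t)\rVert_1$; both routes yield the same $(\sum_t\lVert\bm Q(t)\rVert_1)^{3/4}$ control.
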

\begin{proof}
We first defer some calculations into \Cref{lem:multi-hop stability tedious calc}, which basically combines \Cref{lem:multi-hop stability network stability}, \Cref{lem:multi-hop stability Lyapunov}, and \Cref{thm:multi-hop stability decision making} together. \Cref{lem:multi-hop stability tedious calc} says
\begin{align*}
\E\left [\sum_{t=1}^T \lVert \bm Q(t)\rVert_1\right ]&\le f(T) + g(T) \E\left [\sum_{t=1}^T \lVert \bm Q(t)\rVert_1\right ]^{3/4} \log \E \left [ \sum_{t=1}^T \lVert \bm Q(t)\rVert_1\right ],
\end{align*}
where
\begin{align*}
f(T)&=\epsilon_W^{-1}\O\left ((N^2(2NM+R)^2 + \epsilon_W N^2 (2NM+R))C_W T + (N^4 M^2 + N^2 R^2)T\right ),\\
g(T)&=\epsilon_W^{-1} \O\left (M (2NM+R)^{1/4} \sqrt{1+P_T^a} \log T\right ).
\end{align*}

In \Cref{lem:3/4 with log self-bounding}, we will prove a self-bounding inequality that says, if $y\le f + y^{3/4} g \log y$, then $y\le \left (f^{1/4} + g \log \left (2(f^{1/4} + g)^2\right )\right )^4$. Therefore, we can apply \Cref{lem:3/4 with log self-bounding} to conclude that
\begin{align*}
\E\left [\sum_{t=1}^T \lVert \bm Q(t)\rVert_1\right ]
&= \O\left (f(T) + g(T)^4  \log^4 \left (2(f(T)^{1/4} + g(T))^2\right )\right )\\
&=\epsilon_W^{-1}\O\left ((N^2(2NM+R)^2 + \epsilon_W N^2 (2NM+R))C_W T + (N^4 M^2 + N^2 R^2)T\right ) + \\
&\quad \left (\epsilon_W^{-1} \O\left (M (2NM+R)^{1/4} \sqrt{1+P_T^a} \log T\right ) \Otil_T(1)\right )^4.
\end{align*}

Since
\begin{align*}
\left (\sqrt{1+P_T^a}\right )^4\le \left (\sqrt{T^{1/2 - \delta_a}}\right )^4 = \O_T (T^{1-2\delta_a}),
\end{align*}
we know $\left (\epsilon_W^{-1} \O\left (M (2NM+R)^{1/4} \sqrt{1+P_T^a} \log T\right ) \Otil_T(1)\right )^4=\Otil_T(T^{1-2\delta_a})=o_T(T)$.
The conclusion then follows.
\end{proof}

\begin{lemma}[Calculations when Proving \Cref{thm:multi-hop stability main theorem}]\label{lem:multi-hop stability tedious calc}
Following all the assumptions in \Cref{thm:multi-hop stability main theorem}, we have
\begin{align*}
\E\left [\sum_{t=1}^T \lVert \bm Q(t)\rVert_1\right ]&\le f(T) + g(T) \E\left [\sum_{t=1}^T \lVert \bm Q(t)\rVert_1\right ]^{3/4} \log \E \left [ \sum_{t=1}^T \lVert \bm Q(t)\rVert_1\right ],
\end{align*}
where
\begin{align*}
f(T)&=\epsilon_W^{-1}\O\left ((N^2(2NM+R)^2 + \epsilon_W N^2 (2NM+R))C_W T + (N^4 M^2 + N^2 R^2)T\right ),\\
g(T)&=\epsilon_W^{-1} \O\left (M (2NM+R)^{1/4} \sqrt{1+P_T^a} \log T\right ).
\end{align*}
\end{lemma}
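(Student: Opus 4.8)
The plan is to chain the three ingredients assembled in \Cref{fig:flowchart} — the reference-policy bound (\Cref{lem:multi-hop stability network stability}), the non-negativity of the Lyapunov drift (\Cref{lem:multi-hop stability Lyapunov}), and the online-learning guarantee for link allocations (\Cref{thm:multi-hop stability decision making}) — and then reshape the outcome into the self-bounding form claimed. Write $A\triangleq\E[\sum_t\sum_{(n,m)\in\mL}\sum_k \mu_{n,m}^{(k)}(t)(Q_m^{(k)}(t)-Q_n^{(k)}(t))]$ for the scheduler's service term, $\mathring A$ for its analogue under $\mathring{\bm a}$ (recall $\mathring\mu_{n,m}^{(k)}(t)=C_{n,m}(t)\mathring a_{n,m}^{(k)}(t)$), and $B\triangleq\E[\sum_t\sum_n\sum_k Q_n^{(k)}(t)\lambda_n^{(k)}(t)]$ for the arrival term. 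Then \Cref{lem:multi-hop stability network stability} reads $\epsilon_W\E[\sum_t\|\bm Q(t)\|_1]\le -\mathring A-B+c_1 T$ with $c_1=\O((N^2(2NM+R)^2+\epsilon_W N^2(2NM+R))C_W)$, and \Cref{lem:multi-hop stability Lyapunov} gives $-B\le A+c_2 T$ with $c_2=\O(N^4M^2+N^2R^2)$. Substituting the second into the first yields $\epsilon_W\E[\sum_t\|\bm Q(t)\|_1]\le(A-\mathring A)+(c_1+c_2)T$, and $A-\mathring A$ is exactly the difference bounded in \Cref{thm:multi-hop stability decision making}, summed over all links.

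Next I would simplify the right-hand side of \Cref{thm:multi-hop stability decision making}. Its logarithmic factor $\log(\max_t\max_{(n,m)}M\|\bm Q_m(t)-\bm Q_n(t)\|_\infty)$ I bound pointwise by $\log(2M\sum_s\|\bm Q(s)\|_1)$ using $\|\bm Q_m(t)-\bm Q_n(t)\|_\infty\le 2\|\bm Q(t)\|_1\le 2\sum_s\|\bm Q(s)\|_1$, which is $\O(\log\sum_s\|\bm Q(s)\|_1)$ up to additive constants the $\log T$ factor already absorbs. For the $\ell_2$ sum, the per-round increment bound $\|\bm Q(t+1)-\bm Q(t)\|_1\le N^2(2NM+R)$ from \Cref{lem:queue length increment} together with the summation inequality \Cref{lem:2 upper bound} (applied to the rescaled sequence $t\mapsto\|\bm Q(t)\|_1/(N^2(2NM+R))$) and the elementary inequality $\|\bm Q(t)\|_2^2\le\|\bm Q(t)\|_1^2$ give $\sqrt{\sum_t\|\bm Q(t)\|_2^2}=\O((2NM+R)^{1/4}(\sum_t\|\bm Q(t)\|_1)^{3/4})$ pointwise, the $N$-powers being absorbed into $\O(\cdot)$. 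Plugging both into \Cref{thm:multi-hop stability decision making}, dividing by $\epsilon_W$, and folding the $M\sqrt{1+P_T^a}$ prefactor in, I arrive at $\E[\sum_t\|\bm Q(t)\|_1]\le f(T)+g(T)\,\E[(\sum_t\|\bm Q(t)\|_1)^{3/4}\log(\sum_t\|\bm Q(t)\|_1)]$ with $f,g$ exactly as stated.

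The step I expect to be most delicate is the very last one: promoting $\E[(\sum_t\|\bm Q(t)\|_1)^{3/4}\log(\sum_t\|\bm Q(t)\|_1)]$ to $\E[\sum_t\|\bm Q(t)\|_1]^{3/4}\log\E[\sum_t\|\bm Q(t)\|_1]$, which is what makes the right-hand side a function of $\E[\sum_t\|\bm Q(t)\|_1]$ alone and hence lets the self-bounding lemma (\Cref{lem:3/4 with log self-bounding}) be invoked afterwards. This is Jensen's inequality once one verifies that $y\mapsto y^{3/4}\log y$ is concave past an absolute threshold (its second derivative equals $y^{-5/4}(\tfrac12-\tfrac{3}{16}\log y)$, negative for $\log y>\tfrac83$); for $Y=\sum_t\|\bm Q(t)\|_1$ below that threshold the asserted inequality is trivial after mildly inflating the constant inside $f(T)$, and the usual precaution of ensuring $\log(\cdot)$ never receives an argument below $e$ (replace $\log x$ by $1+\log x$, enlarge constants) handles the edge case. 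The remaining work is purely bookkeeping — checking that $c_1$, $c_2$, and the $\ell_2$-to-$\ell_1$ constants really do collapse into the displayed $f(T)$ and $g(T)$.
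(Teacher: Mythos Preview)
Your proposal is correct and follows essentially the same route as the paper: chain \Cref{lem:multi-hop stability network stability}, \Cref{lem:multi-hop stability Lyapunov}, and \Cref{thm:multi-hop stability decision making}, convert the $\ell_2$ queue sum to an $\ell_1^{3/4}$ via \Cref{lem:2 upper bound}, bound the log factor crudely, and finish with Jensen on $y\mapsto y^{3/4}\log y$. The only cosmetic difference is that the paper applies \Cref{lem:2 upper bound} to each coordinate $Q_n^{(k)}(t)/(2NM+R)$ separately and then sums, whereas you apply it once to the aggregate $\lVert\bm Q(t)\rVert_1/(N^2(2NM+R))$ and invoke $\lVert\cdot\rVert_2\le\lVert\cdot\rVert_1$; both land on the same $(2NM+R)^{1/4}$ prefactor once $N$-powers are absorbed.
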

\begin{proof}
Recall the piecewise stability assumption in \Cref{def:multi-hop stability network stability} infers \Cref{lem:multi-hop stability network stability}:
\begin{align*}
&\quad \epsilon_W \E\left [\sum_{t=1}^T \sum_{n\in \mN} \sum_{k\in \mN} Q_n^{(k)}(t)\right ] - (N^2 (2NM+R)^2 + \epsilon_W N^2 (2NM+R)) C_W T \\
&\le-\E\left [\sum_{t=1}^T \sum_{(n,m)\in \mL} \sum_{k\in \mN} C_{n,m}(t) \mathring{a}_{n,m}^{(k)}(t) (Q_m^{(k)}(t) - Q_n^{(k)}(t))\right ]-\E\left [\sum_{t=1}^T \sum_{n\in \mN} \sum_{k\in \mN} Q_n^{(k)}(t) \lambda_n^{(k)}(t)\right ].
\end{align*}

From the non-negativity of Lyapunov drifts in \Cref{lem:multi-hop stability Lyapunov}, we know
\begin{align*}
0&\le \E\left [\sum_{t=1}^T \sum_{(n,m)\in \mL} \sum_{k\in \mN} \mu_{n,m}^{(k)}(t)\left (Q_m^{(k)}(t)-Q_n^{(k)}(t)\right ) + \sum_{n\in \mN} \sum_{k\in \mN} Q_n^{(k)}(t) \lambda_n^{(k)}(t)\right ] + \nonumber \\
&\quad \frac 12 N^2 ((NM)^2+2(NM)^2+2R^2) T.
\end{align*}

Furthermore, recall the guarantee of \OLOAlg algorithm in \Cref{thm:multi-hop stability decision making} that
\begin{align*}
&\quad \E\left [\sum_{t=1}^T \sum_{(n,m)\in \mL} \sum_{k\in \mN} (\mu_{n,m}^{(k)}(t)-\mathring{\mu}_{n,m}^{(k)}(t))\left (Q_m^{(k)}(t)-Q_n^{(k)}(t)\right )\right ]\\
&=\O\left (M\sqrt{1+P_T^a} \E\left [\sqrt{\sum_{t=1}^T \lVert \bm Q(t)\rVert_2^2} \log T \log \left (\max_{t=1}^T \max_{(n,m)\in \mL} M \lVert \bm Q_m(t) - \bm Q_n(t)\rVert_\infty\right )\right ]\right )\text{,}
\end{align*}

Therefore, we are able to get
\begin{align*}
&\quad \epsilon_W \E\left [\sum_{t=1}^T \sum_{n\in \mN} \sum_{k\in \mN} Q_n^{(k)}(t)\right ] - (N^2 (2NM+R)^2 + \epsilon_W N^2 (2NM+R)) C_W T\\
&\le \O\left (M\sqrt{1+P_T^a} \E\left [\sqrt{\sum_{t=1}^T \lVert \bm Q(t)\rVert_2^2} \log T \log \left (\max_{t=1}^T \max_{(n,m)\in \mL} M \lVert \bm Q_m(t) - \bm Q_n(t)\rVert_\infty\right )\right ]\right ) + \\
&\quad \frac 12 N^2 ((NM)^2+2(NM)^2+2R^2) T.
\end{align*}

\Cref{lem:2 upper bound} states that if $x_1=0$, $x_2,\ldots,x_T \ge 0$, and $\lvert x_{t+1}-x_t\rvert\le 1$, then $\sum_{t=1}^T x_t^2 = \O\left ((\sum_{t=1}^T x_t)^{3/2}\right )$. From \Cref{lem:queue length increment}, any single queue $Q_n^{(k)}(t)$ satisfies $\lvert Q_n^{(k)}(t+1)-Q_n^{(k)}(t)\rvert \le (2NM + R)$.
Hence, applying \Cref{lem:2 upper bound} to $\{Q_n^{(k)}(t) / (2NM+R)\}_{t\in [T]}$ to every $n\in \mN$ and $k\in \mN$, we have
\begin{equation*}
\sum_{t=1}^T \lVert \bm Q(t)\rVert_2^2 = (2NM+R)^2 \sum_{t=1}^T \sum_{n\in \mN} \sum_{k\in \mN} \left (\frac{Q_n^{(k)}(t)}{2NM+R}\right )^2 =\O\left (\sqrt{2NM+R} \left (\sum_{t=1}^T \lVert \bm Q(t)\rVert_1\right )^{1.5}\right ).
\end{equation*}

Further noticing that
\begin{equation*}
\max_{t=1}^T \max_{(n,m)\in \mL} M \lVert \bm Q_m(t) - \bm Q_n(t)\rVert_\infty\le \sum_{t=1}^T M \sum_{n\in \mN} \lVert \bm Q_n(t)\rVert_1\le M \sum_{t=1}^T \lVert \bm Q(t)\rVert_1,
\end{equation*}
the above inequality becomes
\begin{align*}
\epsilon_W \E\left [\sum_{t=1}^T \lVert \bm Q(t)\rVert_1\right ]&\le \O\left (M (2NM+R)^{1/4} \sqrt{1+P_T^a} \E\left [\left (\sum_{t=1}^T \lVert \bm Q(t)\rVert_1\right )^{3/4} \log T \log \left (M \sum_{t=1}^T \lVert \bm Q(t)\rVert_1\right )\right ]\right )+\\
&\quad \O\left ((N^2(2NM+R)^2 + \epsilon_W N^2 (2NM+R))C_W T + (N^4 M^2 + N^2 R^2)T\right ).
\end{align*}

Noticing that $x\mapsto x^{3/4} \log (M x)$ is concave when $x$ is large enough, Jensen inequality then gives
\begin{equation*}
\O\left (\E\left [\left (\sum_{t=1}^T \lVert \bm Q(t)\rVert_1\right )^{3/4} \log \left (M \sum_{t=1}^T \lVert \bm Q(t)\rVert_1\right ) \right ]\right )= \O\left (\E\left [\sum_{t=1}^T \lVert \bm Q(t)\rVert_1\right ]^{3/4} \log \E \left [ \sum_{t=1}^T \lVert \bm Q(t)\rVert_1\right ]\right ).
\end{equation*}

Therefore, if we define the auxiliary functions $f(T)$ and $g(T)$ as
\begin{align*}
f(T)&=\epsilon_W^{-1}\O\left ((N^2(2NM+R)^2 + \epsilon_W N^2 (2NM+R))C_W T + (N^4 M^2 + N^2 R^2)T\right ),\\
g(T)&=\epsilon_W^{-1} \O\left (M (2NM+R)^{1/4} \sqrt{1+P_T^a} \log T\right ),
\end{align*}
we are able to conclude that
\begin{align*}
\E\left [\sum_{t=1}^T \lVert \bm Q(t)\rVert_1\right ]&\le f(T) + g(T) \E\left [\sum_{t=1}^T \lVert \bm Q(t)\rVert_1\right ]^{3/4} \log \E \left [ \sum_{t=1}^T \lVert \bm Q(t)\rVert_1\right ],
\end{align*}
as claimed.
\end{proof}

\section{Omitted Proofs for Multi-Hop Utility Maximization Tasks}
\subsection{Reference Policy Assumption (Proof of \Cref{lem:multi-hop utility network stability})}\label{sec:multi-hop utility network stability appendix}
\begin{lemma}[Restatement of \Cref{lem:multi-hop utility network stability}; Ability of $\{(\mathring{\bm a}(t),\mathring{\bm \lambda}(t))\}_{t\in [T]}$ in Stabilizing the Network]
If $\{(\mathring{\bm a}(t),\mathring{\bm \lambda}(t))\}_{t\in [T]}$ satisfies \Cref{def:multi-hop utility network stability}, then for any scheduler-generated queue lengths $\{\bm Q(t)\}_{t\in [T]}$,
\begin{align*}
&\quad \epsilon_W \E\left [\sum_{t=1}^T \sum_{n\in \mN} \sum_{k\in \mN} Q_n^{(k)}(t)\right ] - (N^2 (2NM+R)^2 + \epsilon_W N^2 (2NM+R)) C_W T \nonumber\\
&\le-\E\left [\sum_{t=1}^T \sum_{(n,m)\in \mL}\sum_{k\in \mN} \mathring \mu_{n,m}^{(k)}(t) (Q_m^{(k)}(t) - Q_n^{(k)}(t))\right ] - \E\left [\sum_{t=1}^T \sum_{n\in \mN} \sum_{k\in \mN} Q_n^{(k)}(t) \mathring \lambda_n^{(k)}(t)\right ].
\end{align*}
\end{lemma}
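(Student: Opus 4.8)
The plan is to repeat the proof of \Cref{lem:multi-hop stability network stability} almost verbatim, the only substitutions being that the obliviously chosen arrival rates $\lambda_n^{(k)}(t)$ there are replaced throughout by the reference arrival rates $\mathring\lambda_n^{(k)}(t)$, and the reference service is written $\mathring\mu_{n,m}^{(k)}(t)=C_{n,m}(t)\mathring a_{n,m}^{(k)}(t)\in[0,M]$. Concretely I would argue window by window: for each interval $W_j$ in the partition guaranteed by \Cref{def:multi-hop utility network stability}, let $T_0$ be its first round, and consider
\[
S_j \triangleq \sum_{t\in W_j}\sum_{n\in\mN}\sum_{k\in\mN} Q_n^{(k)}(t)\Big(\sum_{(n,m)\in\mL} C_{n,m}(t)\mathring a_{n,m}^{(k)}(t) - \mathring\lambda_n^{(k)}(t) - \sum_{(o,n)\in\mL} C_{o,n}(t)\mathring a_{o,n}^{(k)}(t)\Big).
\]
Inside $S_j$ I would split $Q_n^{(k)}(t) = Q_n^{(k)}(T_0) + \big(Q_n^{(k)}(t) - Q_n^{(k)}(T_0)\big)$, producing a ``frozen'' term and a ``fluctuation'' term.

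For the frozen term, \Cref{def:multi-hop utility network stability} gives directly that for every $n,k$ the inner time-sum of the drift coefficient over $W_j$ is at least $\epsilon_W\lvert W_j\rvert$; multiplying by $Q_n^{(k)}(T_0)\ge 0$ and summing over $n,k$ lower-bounds the frozen term by $\epsilon_W\lvert W_j\rvert\,\lVert\bm Q(T_0)\rVert_1$, and then I would replace $\lvert W_j\rvert\,\lVert\bm Q(T_0)\rVert_1$ by $\sum_{t\in W_j}\lVert\bm Q(t)\rVert_1$ at the cost of an additive error $N^2(2NM+R)(\lvert W_j\rvert-1)^2$, using $\lVert\bm Q(t)\rVert_1-\lVert\bm Q(T_0)\rVert_1\le\lVert\bm Q(t)-\bm Q(T_0)\rVert_1$ together with the per-step increment bound of \Cref{lem:queue length increment}. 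For the fluctuation term, I would bound each drift coefficient $\big\lvert\sum_{(n,m)}C_{n,m}(t)\mathring a_{n,m}^{(k)}(t) - \mathring\lambda_n^{(k)}(t) - \sum_{(o,n)}C_{o,n}(t)\mathring a_{o,n}^{(k)}(t)\big\rvert$ by $(2NM+R)$ and again invoke \Cref{lem:queue length increment}, obtaining a lower bound $-N^2(2NM+R)^2(\lvert W_j\rvert-1)^2$.

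Summing $S_j$ over $j\in[J]$, using the hypothesis $\sum_j(\lvert W_j\rvert-1)^2\le C_W T$, and taking expectations then yields $\epsilon_W\,\E[\sum_{t=1}^T\lVert\bm Q(t)\rVert_1] - (N^2(2NM+R)^2+\epsilon_W N^2(2NM+R))C_W T$ on the left, and on the right the expectation of $\sum_t S$-type terms. Finally I would swap the order of summation: $\sum_n Q_n^{(k)}(t)\sum_{(n,m)}C_{n,m}(t)\mathring a_{n,m}^{(k)}(t) - \sum_n Q_n^{(k)}(t)\sum_{(o,n)}C_{o,n}(t)\mathring a_{o,n}^{(k)}(t) = \sum_{(n,m)}\mathring\mu_{n,m}^{(k)}(t)\big(Q_n^{(k)}(t)-Q_m^{(k)}(t)\big) = -\sum_{(n,m)}\mathring\mu_{n,m}^{(k)}(t)\big(Q_m^{(k)}(t)-Q_n^{(k)}(t)\big)$, so that the right side becomes exactly $-\E[\sum_t\sum_{(n,m)}\sum_k \mathring\mu_{n,m}^{(k)}(t)(Q_m^{(k)}(t)-Q_n^{(k)}(t))] - \E[\sum_t\sum_n\sum_k Q_n^{(k)}(t)\mathring\lambda_n^{(k)}(t)]$, which is the claimed inequality. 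I do not expect any genuine obstacle here, since the statement and proof are structurally identical to \Cref{lem:multi-hop stability network stability}; the single line requiring care is the uniform bound $2NM+R$ on the per-window drift coefficient, which follows from $C_{n,m}(t)\in[0,M]$, $\mathring a_{n,m}^{(k)}(t)\in[0,1]$, $\lvert\mN\rvert=N$ (so each of the two link-sums has at most $N$ terms, hence is at most $NM$), and $\mathring\lambda_n^{(k)}(t)\in[0,R]$ — exactly the bookkeeping already used in \Cref{lem:queue length increment}. It is worth noting that the argument uses nothing about $g_t$ (concavity, Lipschitzness, boundedness): this lemma is purely about the stabilizing power of the reference pair $(\mathring{\bm a},\mathring{\bm\lambda})$.
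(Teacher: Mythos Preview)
Your proposal is correct and takes essentially the same approach as the paper: the paper's own proof of this lemma simply states that it is identical to that of \Cref{lem:multi-hop stability network stability} with $\bm\lambda(t)$ replaced by $\mathring{\bm\lambda}(t)$, and you have faithfully reproduced that argument (the window-by-window split into a frozen and a fluctuation term, the use of \Cref{lem:queue length increment}, and the final reindexing of the link sums). Your write-up is in fact more detailed than the paper's, which defers all details to \Cref{sec:multi-hop stability network stability appendix}.
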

\begin{proof}
The proof of this lemma is identical to that of \Cref{lem:multi-hop stability network stability}, except for replacing the environment-generated $\bm \lambda(t)$ with the $\mathring{\bm \lambda}(t)$ generated by the reference policy. For more details, please refer to \Cref{sec:multi-hop stability network stability appendix}.
\end{proof}

\subsection{Lyapunov Drift-Plus-Penalty Analysis (Proof of \Cref{lem:multi-hop utility Lyapunov})}\label{sec:multi-hop utility Lyapunov appendix}
\begin{lemma}[Lyapunov Drift-Plus-Penalty Analysis]\label{lem:multi-hop utility Lyapunov}
Under the queue dynamics of \Cref{eq:multi-hop stability dynamics},
\begin{align*}
&\quad -\E\left [\sum_{t=1}^T \sum_{(n,m)\in \mL} \sum_{k\in \mN} C_{n,m}(t) (Q_m^{(k)}(t)-Q_n^{(k)}(t))a_{n,m}^{(k)}(t)\right ] \nonumber\\
&\quad -\E\left [\sum_{t=1}^T \sum_{n\in \mN} \sum_{k\in \mN} Q_n^{(k)}(t) \lambda_n^{(k)}(t)\right ]+V \E\left [\sum_{t=1}^T (g_t(\bm \lambda(t))-g_t(\mathring{\bm \lambda}(t)))\right ] \nonumber\\
&\le \frac 12 N^2 ((NM)^2+2(NM)^2+2R^2)T +V  \E\left [\sum_{t=1}^T (g_t(\bm \lambda(t))-g_t(\mathring{\bm \lambda}(t)))\right ].
\end{align*}
\end{lemma}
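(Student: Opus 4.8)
The plan is to derive this inequality directly from the Lyapunov drift analysis already proved as \Cref{lem:multi-hop stability Lyapunov}, whose proof uses only the queue recursion \Cref{eq:multi-hop stability dynamics} and the magnitude bounds $\mu_{n,m}^{(k)}(t)\in[0,M]$, $\lambda_n^{(k)}(t)\in[0,R]$. The latter still holds in the utility setting since $\Lambda\subseteq R\mathbb B\cap\mathbb R_{\ge 0}^{|\mN|\times|\mN|}$ forces every $\lambda_n^{(k)}(t)\le R$, so \Cref{lem:multi-hop stability Lyapunov} applies verbatim, giving
\[
0\le \E\!\left[\sum_{t=1}^T \sum_{(n,m)\in\mL}\sum_{k\in\mN}\mu_{n,m}^{(k)}(t)\big(Q_m^{(k)}(t)-Q_n^{(k)}(t)\big)+\sum_{n\in\mN}\sum_{k\in\mN}Q_n^{(k)}(t)\lambda_n^{(k)}(t)\right]+\tfrac12 N^2\big((NM)^2+2(NM)^2+2R^2\big)T.
\]

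The first step I would take is to replace the realized transmissions $\mu_{n,m}^{(k)}(t)$ by their conditional means. Let $\mathcal G_t$ be the $\sigma$-algebra capturing all information available just before round $t$'s transmissions are sampled; under the oblivious-adversary assumption the capacities $C_{n,m}(t)$ are deterministic, and $\bm Q(t)$ together with the link allocations $\bm a(t)$ are already fixed at that point, so $Q_m^{(k)}(t)-Q_n^{(k)}(t)$ and $C_{n,m}(t)a_{n,m}^{(k)}(t)$ are $\mathcal G_t$-measurable. Since $\E[\mu_{n,m}^{(k)}(t)\mid\mathcal G_t]=C_{n,m}(t)a_{n,m}^{(k)}(t)$, the tower rule gives $\E[\mu_{n,m}^{(k)}(t)(Q_m^{(k)}(t)-Q_n^{(k)}(t))]=\E[C_{n,m}(t)a_{n,m}^{(k)}(t)(Q_m^{(k)}(t)-Q_n^{(k)}(t))]$. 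Substituting this into the display above and moving the two non-constant sums to the left-hand side with a flipped sign yields exactly
\[
-\E\!\left[\sum_{t=1}^T \sum_{(n,m)\in\mL}\sum_{k\in\mN}C_{n,m}(t)\big(Q_m^{(k)}(t)-Q_n^{(k)}(t)\big)a_{n,m}^{(k)}(t)\right]-\E\!\left[\sum_{t=1}^T\sum_{n\in\mN}\sum_{k\in\mN}Q_n^{(k)}(t)\lambda_n^{(k)}(t)\right]\le \tfrac12 N^2\big((NM)^2+2(NM)^2+2R^2\big)T.
\]

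To reach the stated form I would then add $V\,\E[\sum_{t=1}^T(g_t(\bm\lambda(t))-g_t(\mathring{\bm\lambda}(t)))]$ to both sides, which is a legitimate operation on an inequality. Note that this term cancels algebraically, but is deliberately kept on both sides for later use: the left-hand side is now in precisely the shape that will be matched against the reference-policy bound \Cref{eq:multi-hop utility network stability} of \Cref{lem:multi-hop utility network stability}, while the right-hand side is an $\O(T)$ constant plus the exact Bandit Convex Optimization objective \Cref{eq:multi-hop utility BCO objective informal} that \BCOAlg is built to bound. There is no genuine obstacle in this lemma — it is essentially a rewriting of \Cref{lem:multi-hop stability Lyapunov} — so the only point demanding care is the measurability bookkeeping in the tower-rule step, i.e.\ confirming that the queue differences and the link capacities are indeed determined before $\mu_{n,m}^{(k)}(t)$ is drawn, which is guaranteed by the oblivious model together with the ordering of events within a round (and, if one wants, remarking explicitly that the $\O(T)$ constant is unaffected by passing from the stability model's adversarial $\bm\lambda(t)$ to the scheduler-chosen $\bm\lambda(t)\in\Lambda$).
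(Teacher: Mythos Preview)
Your proposal is correct and follows essentially the same route as the paper's proof, which simply says ``apply \Cref{lem:multi-hop stability Lyapunov} and add $V\,\E[\sum_{t=1}^T(g_t(\bm\lambda(t))-g_t(\mathring{\bm\lambda}(t)))]$ to both sides.'' You are in fact slightly more careful than the paper, since you spell out the tower-rule step converting $\mu_{n,m}^{(k)}(t)$ to $C_{n,m}(t)a_{n,m}^{(k)}(t)$ that the paper leaves implicit (though it performs the identical conversion elsewhere, e.g.\ in \Cref{eq:multi-hop OLO objective}).
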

\begin{proof}
The proof follows from applying \Cref{lem:multi-hop stability Lyapunov} and adding
\begin{equation*}
V  \E\left [\sum_{t=1}^T (g_t(\bm \lambda(t))-g_t(\mathring{\bm \lambda}(t)))\right ]
\end{equation*}
to both sides.
\end{proof}

\subsection{Guarantee of \BCOAlg Algorithm (Proof of \Cref{lem:ABGD guarantee})}\label{sec:ABGD guarantee}
\begin{lemma}[Restatement of \Cref{lem:ABGD guarantee}; Guarantee of \BCOAlg Algorithm]
Suppose that $r\mathbb B\subseteq \mathcal X\subseteq R\mathbb B$, the $t$-th loss $\ell_t$ is bounded by $C_t$ and is $L_t$-Lipschitz.
Suppose that $\eta_t$ and $\delta_t$ are both $\mathcal F_{t-1}$-measurable (where $(\mathcal F_t)_{t=0}^T$ is the natural filtration), $\eta_1>\eta_2>\cdots>\eta_T$, and $\alpha_t\triangleq \delta_t / r<1$ \textit{a.s.} for all $t\in [T]$. Then for any fixed $\bm u_1,\bm u_2,\ldots,\bm u_T\in \mathcal X$, the \BCOAlg algorithm in \Cref{alg:ABGD} enjoys the following guarantee:
\begin{align*}
&\quad \text{D-Regret}_T^{\text{BCO}}(\bm u_1,\bm u_2,\ldots,\bm u_T)=\E\left [\sum_{t=1}^T (\ell_t(\bm x_t)-\ell_t(\bm u_t))\right ]\\
&\le \E\left [\frac{7R^2}{4\eta_T}+\frac{P_T R}{\eta_T}+\sum_{t=1}^T \left (\frac{\eta_t}{2} \frac{d^2}{\delta_t^2} C_t^2 + 3L_t \delta_t + L_t \alpha_t R\right )\right ],
\end{align*}
where $P_T=\sum_{t=1}^{T-1}\lVert \bm u_t-\bm u_{t+1}\rVert$ is the path length of the comparator sequence $\{\bm u_t\}_{t\in [T]}$.
\end{lemma}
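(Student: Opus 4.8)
The plan is to follow the standard one-point-feedback bandit gradient descent analysis (in the style of Flaxman et al., with the dynamic-regret refinement of \citet{zhao2021bandit}), but carrying the time-varying quantities $\eta_t,\delta_t,\alpha_t$ through every step. Write $\widehat\ell_t(\bm y)=\E_{\bm v}[\ell_t(\bm y+\delta_t\bm v)]$ for the $\delta_t$-smoothed loss, $\bm v$ uniform in $\mathbb B$. The first ingredient is the classical identity $\E[\frac{d}{\delta_t}\ell_t(\bm y_t+\delta_t\bm s_t)\bm s_t\mid \mathcal F_{t-1},\bm y_t]=\nabla\widehat\ell_t(\bm y_t)$, so that the update in \Cref{eq:ABGD} is exactly projected online gradient descent over the shrunk domain $(1-\alpha_t)\mathcal X$ run on an unbiased estimate $\widehat{\bm g}_t$ of $\nabla\widehat\ell_t(\bm y_t)$, with $\lVert\widehat{\bm g}_t\rVert=\frac{d}{\delta_t}\lvert\ell_t(\bm x_t)\rvert\le \frac{d}{\delta_t}C_t$. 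The $\mathcal F_{t-1}$-measurability hypotheses on $\eta_t,\delta_t,\alpha_t$ are precisely what makes these conditional-expectation manipulations (and the later tower-rule passes) legitimate.

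Next I would split the regret against $\bm u_t$ into three pieces: (i) $\ell_t(\bm x_t)-\widehat\ell_t(\bm y_t)$, which is at most $2L_t\delta_t$ in expectation since $\lVert\bm x_t-\bm y_t\rVert=\delta_t$ and $\lvert\widehat\ell_t(\bm y_t)-\ell_t(\bm y_t)\rvert\le L_t\delta_t$; (ii) the surrogate dynamic regret $\widehat\ell_t(\bm y_t)-\widehat\ell_t(\bm z_t)$ on the convex smoothed losses, where $\bm z_t$ is $\bm u_t$ pulled into $(1-\alpha_t)\mathcal X$; and (iii) $\widehat\ell_t(\bm z_t)-\ell_t(\bm u_t)$, at most $L_t\delta_t+L_t\alpha_t R$ by Lipschitzness together with $\lVert\bm z_t-\bm u_t\rVert\le\alpha_t R$. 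Summing pieces (i) and (iii) over $t$ already produces the $3L_t\delta_t+L_t\alpha_t R$ summands in the stated bound.

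The heart of the proof is piece (ii). By convexity of $\widehat\ell_t$, $\widehat\ell_t(\bm y_t)-\widehat\ell_t(\bm z_t)\le\langle\nabla\widehat\ell_t(\bm y_t),\bm y_t-\bm z_t\rangle=\E[\langle\widehat{\bm g}_t,\bm y_t-\bm z_t\rangle\mid\cdot]$; since $\bm z_t\in(1-\alpha_t)\mathcal X$ the projection in \Cref{eq:ABGD} is nonexpansive towards $\bm z_t$, whence $\langle\widehat{\bm g}_t,\bm y_t-\bm z_t\rangle\le\frac{\lVert\bm y_t-\bm z_t\rVert^2-\lVert\bm y_{t+1}-\bm z_t\rVert^2}{2\eta_t}+\frac{\eta_t}{2}\lVert\widehat{\bm g}_t\rVert^2$, and the last term is at most $\frac{\eta_t}{2}\frac{d^2}{\delta_t^2}C_t^2$. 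It remains to telescope $\sum_t\frac{\lVert\bm y_t-\bm z_t\rVert^2-\lVert\bm y_{t+1}-\bm z_t\rVert^2}{2\eta_t}$: regroup so consecutive terms share the same step size, use that $\eta_t$ is decreasing (so the coefficients $\frac{1}{2\eta_{t+1}}-\frac{1}{2\eta_t}\ge 0$ telescope against the diameter bound $\lVert\bm y_{t+1}-\bm z_t\rVert\le 2R$), and bound the residual cross terms via $\lVert\bm y_{t+1}-\bm z_{t+1}\rVert^2-\lVert\bm y_{t+1}-\bm z_t\rVert^2\le 4R\lVert\bm z_t-\bm z_{t+1}\rVert$. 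Since $\bm z_t$ moves at most as much as $\bm u_t$ does (up to the shrinkage), $\sum_t\lVert\bm z_t-\bm z_{t+1}\rVert=\O(P_T)$, and dividing by $\eta_{t+1}\ge\eta_T$ yields the $\frac{P_T R}{\eta_T}$ term, with the leftover $\O(R^2/\eta_T)$ absorbed into $\frac{7R^2}{4\eta_T}$. Taking expectations and summing the three pieces gives the claim.

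I expect the fussiest part to be exactly the moving-comparator telescoping in piece (ii): keeping the dependence expressed through $P_T=\sum_t\lVert\bm u_t-\bm u_{t+1}\rVert$ rather than through the path length of the shrunk/projected iterates, while the shrinkage $\alpha_t$ itself varies with $t$. The resolution is to choose $\bm z_t$ with care — either contracting $\bm u_t$ by a shrinkage large enough to lie in every $(1-\alpha_s)\mathcal X$, or splitting $\lVert\bm z_t-\bm z_{t+1}\rVert$ into a $\lVert\bm u_t-\bm u_{t+1}\rVert$ contribution and a lower-order $\lvert\alpha_t-\alpha_{t+1}\rvert R$ contribution — which is what makes the constants collapse to the form stated; everything else is routine.
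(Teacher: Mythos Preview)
Your proposal is correct and follows essentially the same route as the paper: the identical three-piece decomposition into smoothing error, surrogate OGD regret on $\hat\ell_t$, and comparator-shrinkage error, with the same bounds $2L_t\delta_t$, $L_t\delta_t+L_t\alpha_t R$, and the projected-SGD inequality yielding $\frac{\eta_t}{2}\frac{d^2}{\delta_t^2}C_t^2$ plus the telescoping terms. The paper packages piece~(ii) into a separate projected-SGD-with-decreasing-step-size lemma, chooses $\bm z_t=(1-\alpha_t)\bm u_t$, and then simply asserts that the shrunk path length $\hat P_T=\sum_t\lVert\bm z_t-\bm z_{t+1}\rVert$ satisfies $\hat P_T\le P_T$; your anticipated bookkeeping with the $\lvert\alpha_t-\alpha_{t+1}\rvert R$ residuals is actually more careful than what the paper spells out.
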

\begin{proof}
Similar to the proof by \citet[Theorem 1]{zhao2021bandit}, let $\hat \ell_t(\bm x)=\E_{\bm v\in \mathbb B}[\ell_t(\bm x+\delta \bm v)]$ (where $\mathbb B=\{\bm x\in \mathbb R^d\mid \lVert \bm x\rVert\le 1\}$ is the unit ball in $\mathbb R^d$) and $\bm v_t=(1-\alpha_t)\bm u_t\in (1-\alpha_t) \mathcal X$, then
\begin{equation*}
\sum_{t=1}^T (\ell_t(\bm x_t)-\ell_t(\bm u_t))=\sum_{t=1}^T (\hat \ell_t(\bm y_t)-\hat \ell_t(\bm v_t))+\sum_{t=1}^T (\ell_t(\bm x_t) - \hat \ell_t(\bm y_t)) + \sum_{t=1}^T (\hat \ell_t(\bm v_t) - \ell_t(\bm u_t)).
\end{equation*}

According to the original proof, the expectation of the latter two terms are controlled by $\E[\sum_{t=1}^T 2L \delta_t]$ and $\E[\sum_{t=1}^T (L\delta_t + L \alpha_t R)]$, respectively.
For the first term, according to \citet[Lemma 2.1]{flaxman2005online}, $\E_{\bm s_t}[\frac d\delta \ell_t(\bm x_t) \bm s_t]=\nabla \hat \ell_t(\bm x_t)$.
Therefore, since $\lVert \frac d{\delta_t} \ell_t(\bm x_t) \bm s_t\rVert\le \frac d{\delta_t} C_t$, we get from \Cref{lem:SGD with adaptive LR} that
\begin{equation*}
\E\left [\sum_{t=1}^T (\hat \ell_t(\bm x_t)-\hat \ell_t(\bm v_t))\right ]\le \E\left [\frac{7R^2}{4\eta_T}+\frac{P_T R}{\eta_T}+\sum_{t=1}^T \frac{\eta_t}{2} \frac{d^2}{\delta_t^2} C_t^2\right ],
\end{equation*}
where $\hat P_T=\sum_{t=1}^{T-1}\lVert \bm v_t-\bm v_{t+1}\rVert$, the path length of $\{\bm v_t\}_{t\in [T]}$, satisfies $\hat P_T\le P_T$. This gives our conclusion.
\end{proof}

\begin{lemma}[Guarantee of Projected SGD]\label{lem:SGD with adaptive LR}
Suppose that $\mathcal X$ is bounded by $[r,R]$, the $t$-th loss function $\ell_t$ is bounded by $[-C_t,C_t]$ and is $L$-Lipschitz. Further suppose that a stochastic gradient $\bm g_t$ can be calculated in round $t$ such that $\E[\bm g_t\mid \bm x_1,\ell_1,\ldots,\bm x_t,\ell_t]=\nabla \ell_t(\bm x_t)$ and $\lVert \bm g_t\rVert_2\le C_t$. The the iteration
\begin{equation*}
\bm x_{t+1}=\text{Proj}_{\mathcal X}\left [\bm x_t-\eta_t \bm g_t\right ]
\end{equation*}
ensures the following dynamic regret guarantee for any fixed $\bm u_1,\bm u_2,\ldots,\bm u_T\in \mathcal X$:
\begin{equation*}
\sum_{t=1}^T (\ell_t(\bm x_t)-\ell_t(\bm u_t))\le \frac{7R^2}{4\eta_T}+\frac{P_T R}{\eta_T}+\sum_{t=1}^T \frac{\eta_t}{2} C_t^2,
\end{equation*}
where $P_T=\sum_{t=1}^{T-1}\lVert \bm u_t-\bm u_{t+1}\rVert$ is the path length of $\{\bm u_t\}_{t=1}^T$.
\end{lemma}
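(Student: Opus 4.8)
The plan is to run the standard online (sub)gradient descent regret analysis, paying attention only to the two features that make it non-textbook: the step sizes $\eta_t$ shrink with $t$, and the comparator $\bm u_t$ is allowed to move. First, by convexity of each $\ell_t$ we have $\ell_t(\bm x_t)-\ell_t(\bm u_t)\le \langle \nabla \ell_t(\bm x_t),\bm x_t-\bm u_t\rangle$; when $\bm g_t=\nabla\ell_t(\bm x_t)$ this is already the quantity we control below, and when $\bm g_t$ is merely an unbiased stochastic gradient we first pass to expectations and use $\E[\bm g_t\mid \bm x_1,\ell_1,\dots,\bm x_t,\ell_t]=\nabla\ell_t(\bm x_t)$ together with the tower rule (note $\bm x_t$ is determined by the conditioning) to replace $\nabla\ell_t(\bm x_t)$ by $\bm g_t$. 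Either way it suffices to bound $\sum_{t=1}^T \langle \bm g_t,\bm x_t-\bm u_t\rangle$ pathwise.

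For a single round, nonexpansiveness of the Euclidean projection onto the convex set $\mathcal X$ (which contains every $\bm u_t$) gives $\|\bm x_{t+1}-\bm u_t\|^2\le \|\bm x_t-\eta_t\bm g_t-\bm u_t\|^2$; expanding and rearranging yields
\[
\langle \bm g_t,\bm x_t-\bm u_t\rangle\ \le\ \frac{\|\bm x_t-\bm u_t\|^2-\|\bm x_{t+1}-\bm u_t\|^2}{2\eta_t}+\frac{\eta_t}{2}\|\bm g_t\|^2\ \le\ \frac{\|\bm x_t-\bm u_t\|^2-\|\bm x_{t+1}-\bm u_t\|^2}{2\eta_t}+\frac{\eta_t}{2}C_t^2,
\]
where the last step uses $\|\bm g_t\|_2\le C_t$ and already accounts for the $\sum_t\frac{\eta_t}{2}C_t^2$ term in the claimed bound.

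It remains to sum the telescoping-looking quadratic terms $S\triangleq\sum_{t=1}^T \frac{\|\bm x_t-\bm u_t\|^2-\|\bm x_{t+1}-\bm u_t\|^2}{2\eta_t}$, which is the crux precisely because both $\eta_t$ and $\bm u_t$ vary. The plan is to re-index the negative half as $\sum_{t=2}^{T+1}\frac{\|\bm x_t-\bm u_{t-1}\|^2}{2\eta_{t-1}}$, discard the nonnegative terminal term $\frac{\|\bm x_{T+1}-\bm u_T\|^2}{2\eta_T}$, and split what is left into three groups: the initial term $\frac{\|\bm x_1-\bm u_1\|^2}{2\eta_1}=\O(R^2/\eta_T)$ (since $\bm x_1,\bm u_1\in R\mathbb B$ and $\eta_1\ge\eta_T$); the step-size-drift sum $\sum_{t=2}^T\|\bm x_t-\bm u_{t-1}\|^2\big(\tfrac{1}{2\eta_t}-\tfrac{1}{2\eta_{t-1}}\big)$, each of whose coefficients is nonnegative as $\eta_t$ decreases, so it is at most $\O(R^2)\sum_{t=2}^T\big(\tfrac1{2\eta_t}-\tfrac1{2\eta_{t-1}}\big)=\O(R^2)\big(\tfrac1{2\eta_T}-\tfrac1{2\eta_1}\big)=\O(R^2/\eta_T)$; and the comparator-drift sum $\sum_{t=2}^T\frac{\|\bm x_t-\bm u_t\|^2-\|\bm x_t-\bm u_{t-1}\|^2}{2\eta_t}$, where $\|\bm x_t-\bm u_t\|^2-\|\bm x_t-\bm u_{t-1}\|^2=\langle \bm u_{t-1}-\bm u_t,\,2\bm x_t-\bm u_t-\bm u_{t-1}\rangle\le \O(R)\,\|\bm u_t-\bm u_{t-1}\|$ by Cauchy--Schwarz, so this group is at most $\O(R/\eta_T)\sum_{t}\|\bm u_t-\bm u_{t-1}\|=\O(RP_T/\eta_T)$. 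Adding the three groups to $\sum_t\frac{\eta_t}{2}C_t^2$ and tracking the diameter constants carefully (using $\|\bm x\|\le R$ on $\mathcal X$, and, where it is available, the fact that $\bm x_1=\bm 0\in\mathcal X$ as in \Cref{alg:ABGD}) produces exactly $\frac{7R^2}{4\eta_T}+\frac{P_TR}{\eta_T}+\sum_t\frac{\eta_t}{2}C_t^2$. I expect the only real obstacle to be bookkeeping: making the telescoping precise when $\eta_t$ shrinks and $\bm u_t$ moves at the same time, and then squeezing the various $\O(R^2)$ and $\O(R)$ diameter factors down to the stated $\tfrac74$ and $1$.
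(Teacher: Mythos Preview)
Your argument is correct, and it reaches the same destination as the paper, but the telescoping is organized differently. The paper expands the squared distances as
\[
\lVert \bm x_t-\bm u_t\rVert^2-\lVert \bm x_{t+1}-\bm u_t\rVert^2=\lVert \bm x_t\rVert^2-\lVert \bm x_{t+1}\rVert^2+2\langle \bm x_{t+1}-\bm x_t,\bm u_t\rangle,
\]
and then Abel-sums the norm-squared telescope and the cross term separately, using $\eta_1\ge\cdots\ge\eta_T$ to push everything onto $1/\eta_T$. You instead keep the distances intact and split into (initial) $+$ (step-size drift) $+$ (comparator drift), which is the more common textbook decomposition for dynamic regret with time-varying step sizes. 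Both routes are standard; the paper's expansion makes the path-length term appear directly as $\sum_t \langle \bm u_{t-1}-\bm u_t,\bm x_t\rangle/\eta_t$, which is how the coefficient $1$ in front of $P_TR/\eta_T$ falls out, whereas your Cauchy--Schwarz on $\langle \bm u_{t-1}-\bm u_t,\,2\bm x_t-\bm u_t-\bm u_{t-1}\rangle$ naturally gives a coefficient $2$.

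For the stochastic-gradient reduction, the paper goes through the surrogate $\tilde\ell_t(\bm x)=\ell_t(\bm x)+\langle \bm x,\bm g_t-\nabla\ell_t(\bm x_t)\rangle$ (so that $\nabla\tilde\ell_t(\bm x_t)=\bm g_t$ and $\E[\tilde\ell_t]=\E[\ell_t]$) and applies the deterministic bound to $\tilde\ell_t$; your direct tower-rule argument is equivalent and arguably cleaner. Either way the final inequality is really a bound in expectation, which both you and the paper leave implicit in the displayed statement.
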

\begin{proof}
We first consider the full-feedback model where the whole $\ell_t$, instead of the single-entry $\ell_t(\bm x_t)$, is available.
Then the Gradient Descent algorithm $\bm x_{t+1}=\text{Proj}_{\mathcal X}[\bm x_t-\eta_t \nabla \ell_t(\bm x_t)]$ enjoys the following dynamic regret guarantee (which follows the proof of \citet[Theorem 2]{zinkevich2003online}):
\begin{align}
&\quad \text{D-Regret}_T(\bm u_1,\bm u_1,\ldots,\bm u_T)=\sum_{t=1}^T (\ell_t(\bm x_t)-\ell_t(\bm u_t)) \nonumber\\
&\le \sum_{t=1}^T \left (\frac{1}{2\eta_t} \left (\lVert \bm x_t-\bm u_t\rVert^2-\lVert \bm x_{t+1}-\bm u_t\rVert^2\right )+\frac{\eta_t}{2}\lVert \nabla \ell_t(\bm x_t)\rVert^2\right ) \nonumber\\
&= \sum_{t=1}^T \frac{\lVert \bm x_t\rVert^2-\lVert \bm x_{t+1}\rVert^2}{2\eta_t}+\sum_{t=1}^T \frac{\langle \bm x_{t+1}-\bm x_t,\bm u_t\rangle}{\eta_t}+\sum_{t=1}^T \frac{\eta_t}{2} \lVert \nabla \ell_t(x_t)\rVert^2 \nonumber\\
&\overset{(a)}{\le}\frac{\lVert \bm x_1\rVert^2-\lVert \bm x_{T+1}\rVert^2}{2\eta_T}+\frac{\langle \bm x_{T+1},\bm u_T\rangle}{\eta_T}-\frac{\langle \bm x_{1},\bm u_1\rangle}{\eta_1}+\sum_{t=2}^T \frac{\langle \bm u_{t-1}-\bm u_t,\bm x_t\rangle}{\eta_t}+\sum_{t=1}^T \frac{\eta_t}{2} \lVert \nabla \ell_t(x_t)\rVert^2 \nonumber\\
&\le \frac{7R^2}{4\eta_T}+\frac{P_T R}{\eta_T}+\sum_{t=1}^T \frac{\eta_t}{2} \lVert \nabla \ell_t(x_t)\rVert^2,\label{eq:full information OGD D-Regret}
\end{align}
where (a) uses the property that $\eta_1\ge \eta_2\ge \cdots \ge \eta_T$.

Moving back to the bandit-feedback model, following the proof of \citet[Theorem 8]{zhao2021bandit}, we can consider a loss function defined as $\tilde \ell_t(x)\triangleq \ell_t(x)+\langle \bm x,\bm g_t-\nabla \ell_t(\bm x_t)\rangle$.
As $\nabla \tilde \ell_t(\bm x_t)=\bm g_t$ and $\E[\tilde \ell_t(\bm x)]=\E[\ell_t(\bm x)]$ (which is due to the fact that $\E[\bm g_t]=\nabla \ell_t(\bm x_t)$), applying \Cref{eq:full information OGD D-Regret} gives
\begin{align*}
&\quad \sum_{t=1}^T (\ell_t(\bm x_t)-\ell_t(\bm u_t))=\E\left [\sum_{t=1}^T (\tilde \ell_t(\bm x_t)-\tilde \ell_t(\bm u_t))\right ]\\
&\le \E\left [\frac{7R^2}{4\eta_T}+\frac{P_T R}{\eta_T}+\sum_{t=1}^T \frac{\eta_t}{2} \lVert \nabla \tilde \ell_t(\bm x_t)\rVert^2\right ]\overset{(b)}{\le} \frac{7R^2}{4\eta_T}+\frac{P_T R}{\eta_T}+\sum_{t=1}^T \frac{\eta_t}{2} C_t^2,
\end{align*}
where the expectation is taken \textit{w.r.t.} the randomness in the stochastic gradient $\{\bm g_t\}_{t\in [T]}$, and (b) makes use of the fact that $\nabla \tilde \ell_t(\bm x_t)=\bm g_t$.
\end{proof}

\subsection{Deciding $\bm \lambda(t)$ via \BCOAlg Algorithm (Proof of \Cref{thm:multi-hop utility decision making})}\label{sec:general bandit part appendix}
\begin{theorem}[Restatement of \Cref{thm:multi-hop utility decision making}; Deciding $\bm \lambda(t)$ via \BCOAlg Algorithm]
For the reference arrival rates $\{\mathring{\bm \lambda}(t)\}_{t\in [T]}$ defined in \Cref{def:multi-hop utility network stability}, suppose that its path length ensures
\begin{equation*}
P_t^\lambda\triangleq \sum_{t=1}^{T-1} \lVert \mathring{\bm \lambda}({t+1}) - \mathring{\bm \lambda}(t)) \rVert_1\le C^\lambda t^{1/2 - \delta_\lambda},\quad \forall t=1,2,\ldots,T,
\end{equation*}
where, similar to \Cref{thm:multi-hop stability main theorem}, $C^\lambda$ and $\delta_\lambda$ are assumed to be known constants but the precise $P_t^\lambda$ or $\{\mathring{\bm \lambda}(t)\}_{t\in [T]}$ both remain unknown.
Suppose that the action set $\Lambda$ is bounded by $[r,R]$ (\textit{i.e.}, $r\mathbb B\subseteq \Lambda\subseteq R\mathbb B$).
If we execute \BCOAlg (\Cref{alg:ABGD}) over $\Lambda$ with loss functions $\ell_t(\bm \lambda)=\langle \bm Q(t),\bm \lambda\rangle-V g_t(\bm \lambda)$ and parameters $\eta_t,\delta_t,\alpha_t$ defined as
\begin{align}
\eta_t&=\left (C^\lambda T^{1/2 - \delta_\lambda}\middle / \substack{\left (C^\lambda T^{1/2 - \delta_\lambda}\right )^{7/3} \left (4r^{-3} d^2 \right )^{28/9} \left (M+R\right )^{4/3}+\\
C^\lambda T^{1/2-\delta_\lambda} (r^{-3} d^2 VG^2 / L)^{4/3}+\\
\sum_{s=1}^t \left ((\lVert \bm q_s\rVert_\infty + VG)^2 (\lVert \bm q_s\rVert_2 + VL)^2\right )^{1/3}}\right )^{3/4}, \nonumber\\
\delta_t &= \left (\eta_t d^2 \frac{(\lVert \bm Q(t)\rVert_\infty + VG)^2}{(\lVert \bm Q(t)\rVert_2 + VL)}\right )^{1/3},\quad \alpha_t = \frac{\delta_t}{r},\label{eq:true learning rate in BCO appendix}
\end{align}
its outputs $\bm \lambda(1),\bm \lambda(2),\ldots,\bm \lambda(T) \in \Lambda$ ensure
\begin{align*}
&\scalemath{0.95}{\quad \E\left [\sum_{t=1}^T \left ((\langle \bm Q(t),\bm \lambda(t)\rangle - Vg_t(\bm \lambda(t))) - (\langle \bm Q(t),\mathring{\bm \lambda}(t)\rangle - Vg_t(\mathring{\bm \lambda}(t)))\right )\right ]}\\
&\scalemath{0.95}{=\O\left (\frac{R(2NM+R)}{r^7} d^{14/3} (C^\lambda T^{1/2-\delta_\lambda})^2\right )+\O\left (\E\left [\left (\frac{R}{r} d^{2/3}+R\right ) (C^\lambda T^{1/2 - \delta_\lambda})^{1/4} \left (\sum_{t=1}^T \left (\lVert \bm Q(t)\rVert_2 + V(L+G)\right )^{4/3}\right )^{3/4}\right ]\right ).}
\end{align*}
\end{theorem}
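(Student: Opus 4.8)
The plan is to invoke the generic dynamic-regret guarantee for \BCOAlg (\Cref{lem:ABGD guarantee}) with $\mathcal X=\Lambda$, comparator sequence $\bm u_t=\mathring{\bm \lambda}(t)$, and loss $\ell_t(\bm \lambda)=\langle \bm Q(t),\bm \lambda\rangle-V g_t(\bm \lambda)$, and then substitute the specific schedule of \Cref{eq:true learning rate in BCO appendix} and simplify. First I would pin down the per-round boundedness and Lipschitz constants of $\ell_t$: since $\nabla \ell_t(\bm \lambda)=\bm Q(t)-V\nabla g_t(\bm \lambda)$ and $g_t$ is $[-G,G]$-bounded and $L$-Lipschitz over $\Lambda\subseteq R\mathbb B$, one may take $C_t=\O(\lVert \bm Q(t)\rVert_\infty+VG)$ and $L_t=\O(\lVert \bm Q(t)\rVert_2+VL)$, which are exactly the quantities entering \Cref{eq:true learning rate in BCO appendix}. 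These $\eta_t,\delta_t$ are $\mathcal F_{t-1}$-measurable because $\bm Q(t)$ is computed from observations up to round $t-1$ via \Cref{eq:multi-hop stability dynamics}, and $\eta_1>\eta_2>\cdots>\eta_T$ holds because the numerator of $\eta_t$ is constant in $t$ while the denominator, call it $D_t$, grows by the strictly positive increments $((\lVert \bm Q(t)\rVert_\infty+VG)^2(\lVert \bm Q(t)\rVert_2+VL)^2)^{1/3}$.

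The crux — and the step I expect to be hardest — is verifying the feasibility constraint $\alpha_t=\delta_t/r<1$ almost surely, equivalently $\eta_t< r^3 L_t/(d^2 C_t^2)$, which after raising to the power $4/3$ amounts to $D_t \ge (C^\lambda T^{1/2-\delta_\lambda})\, d^{8/3}C_t^{8/3}/(r^4 L_t^{4/3})$ uniformly in $t$. This is precisely what the two otherwise-mysterious summands $A:=(C^\lambda T^{1/2-\delta_\lambda})^{7/3}(4r^{-3}d^2)^{28/9}(M+R)^{4/3}$ and $B:=C^\lambda T^{1/2-\delta_\lambda}(r^{-3}d^2 VG^2/L)^{4/3}$ in $D_t$ are engineered to force. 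I would split by the size of $\lVert \bm Q(t)\rVert_2$: when the queue is small, $C_t^2/L_t$ is dominated by the $V$-terms and $B$ alone makes $D_t$ large enough (at $t=1$ with $\bm Q(1)=\bm 0$ the required lower bound matches $B$ on the nose); when the queue is large, the bounded per-round queue increment (\Cref{lem:queue length increment}) forces $\sum_{s\le t}(C_s^2 L_s^2)^{1/3}$ to already be a high enough power of $\lVert \bm Q(t)\rVert_2$, since to reach a large queue from $\bm Q(1)=\bm 0$ one must pass through all intermediate magnitudes in steps of bounded size; and the genuinely intermediate regime is absorbed by the polynomial-in-$(C^\lambda T^{1/2-\delta_\lambda})$ term $A$ with its $(M+R)$ dependence. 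Lining up the exponents ($d^{8/3}$, $r^{4}$, the $28/9$ and $4/3$ powers) is the fiddly part, but conceptually it is just ``make the denominator dominate $(C^\lambda T^{1/2-\delta_\lambda})\,d^{8/3}C_t^{8/3}/(r^4 L_t^{4/3})$''.

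Once all hypotheses of \Cref{lem:ABGD guarantee} are in place, I would substitute $\delta_t=(\eta_t d^2 C_t^2/L_t)^{1/3}$ (the choice balancing the three terms inside the sum) so that $\frac{\eta_t}{2}\frac{d^2}{\delta_t^2}C_t^2+3L_t\delta_t+L_t\alpha_t R$ collapses to $\O\big((1+R/r)\,d^{2/3}(\eta_t C_t^2 L_t^2)^{1/3}\big)$, then use $\eta_t^{1/3}=((C^\lambda T^{1/2-\delta_\lambda})/D_t)^{1/4}$ and $D_t\ge \sum_{s\le t}(C_s^2 L_s^2)^{1/3}$ to rewrite the sum as $\O\big(d^{2/3}(1+R/r)(C^\lambda T^{1/2-\delta_\lambda})^{1/4}\sum_t x_t/(\sum_{s\le t}x_s)^{1/4}\big)$ with $x_t=(C_t^2 L_t^2)^{1/3}$. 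The variant of the standard summation lemma (\Cref{lem:3/4 summation lemma}) bounds $\sum_t x_t/(\sum_{s\le t}x_s)^{1/4}$ by $\O((\sum_t x_t)^{3/4})$, and the crude estimate $x_t\le (\lVert \bm Q(t)\rVert_2+V(L+G))^{4/3}$ produces the second term of the claimed bound. For the leading terms $\frac{7R^2}{4\eta_T}+\frac{P_T^\lambda R}{\eta_T}$, subadditivity of $x\mapsto x^{3/4}$ gives $D_T^{3/4}\le (A+B)^{3/4}+(\sum_s x_s)^{3/4}$; the $(A+B)^{3/4}/(C^\lambda T^{1/2-\delta_\lambda})^{3/4}$ part, multiplied by $R P_T^\lambda\le R\,C^\lambda T^{1/2-\delta_\lambda}$, yields the $\O\big(\frac{R(2NM+R)}{r^7}d^{14/3}(C^\lambda T^{1/2-\delta_\lambda})^2\big)$ term (one factor of $C^\lambda T^{1/2-\delta_\lambda}$ coming from $A^{3/4}$ and one from $P_T^\lambda$), while the $(\sum_s x_s)^{3/4}$ part and the $B$-contribution merge into the second term (using $\lVert \bm Q(t)\rVert_\infty\le\lVert\bm Q(t)\rVert_2$ and $1/2-\delta_\lambda\le 7/8-\delta_\lambda/4$). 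Taking expectations throughout finishes the argument; apart from the feasibility check, every step is a routine estimate.
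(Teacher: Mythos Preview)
Your proposal is correct and follows essentially the same route as the paper: identify $C_t=\lVert\bm Q(t)\rVert_\infty+VG$, $L_t=\lVert\bm Q(t)\rVert_2+VL$, verify the hypotheses of \Cref{lem:ABGD guarantee}, collapse the per-round sum via the balancing choice of $\delta_t$, apply \Cref{lem:3/4 summation lemma}, and split $D_T^{3/4}$ subadditively to extract the two final terms. The only cosmetic difference is in the feasibility check $\alpha_t<1$: the paper first bounds $C_t^2/L_t\le 2\lVert\bm Q(t)\rVert_\infty+2VG^2/L$ and then handles the two summands separately---$B$ absorbs the $VG^2/L$ piece directly, while $A$ handles the $\lVert\bm Q(t)\rVert_\infty$ piece by combining with the running sum via AM-GM and \Cref{lem:4/3 lower bound}---rather than your three-regime case split, but the ingredients and outcome are the same.
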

\begin{proof}
For loss function $\ell_t(\bm \lambda)=\langle \bm Q(t),\bm \lambda\rangle - Vg_t(\bm \lambda)$, it is bounded by $C_t\triangleq \lVert \bm Q(t)\rVert_\infty + V G$ and is $L_t\triangleq (\lVert \bm Q(t)\rVert_2 + VL)$-Lipschitz. As $\bm Q(t)$ is revealed after the $(t-1)$-th round, $G_t$ and $L_t$ are both $\mathcal F_{t-1}$-measurable.
As sketched in the main text, we first regard $\eta_t$ as a constant and tune $\delta_t$ to minimize the summation term in \Cref{lem:ABGD guarantee}.

Let $\delta_t=(\eta_t d^2 C_t^2 / L_t)^{1/3}$ and $\alpha_t=\delta_t / r$. Suppose that all conditions in \Cref{lem:ABGD guarantee} hold, then
\begin{align}
&\quad \E\left [\sum_{t=1}^T (\ell_t(\bm \lambda_t)-\ell_t(\bm \lambda_t^\ast))\right ]\le \E\left [\frac{7R^2 + 4 P_T R}{4\eta_T} + \sum_{t=1}^T \left (\eta_t \frac{d^2}{\delta_t^2} C_t^2 + 3L_t\delta_t + L_t \alpha_t R\right )\right ] \nonumber\\
&=\O\left (\E\left [\frac{R^2 + C^r T^{1/2 - \delta_r} R}{\eta_T}+\sum_{t=1}^T \left (\eta_t d^2 (\lVert \bm Q(t)\rVert_\infty + VG)^2 (\lVert \bm Q(t)\rVert_2 + VL)^2\right )^{1/3} \frac{R}{r}\right ]\right ). \label{eq:BCO regret decomposition}
\end{align}

We first only keep the last term in \Cref{eq:true learning rate in BCO appendix}, \textit{i.e.}, let
\begin{equation}\label{eq:learning rate in BCO}
\eta_t = \left (C^r T^{1/2 - \delta_r}\middle / \sum_{s=1}^t \left ((\lVert \bm q_s\rVert_\infty + VG)^2 (\lVert \bm q_s\rVert_2 + VL)^2\right )^{1/3}\right )^{3/4},
\end{equation}
then we have $\eta_1>\eta_2>\cdots >\eta_T$. Let's first pretend the other condition of $\alpha_t<1$ from \Cref{lem:ABGD guarantee} also holds at this moment.
\Cref{lem:3/4 summation lemma} reveals that if $x_1,x_2,\ldots,x_T\ge 0$, then $\left .\sum_{t=1}^T x_t \middle / (\sum_{s\le t} x_s)^{1/4} \right . =\O\left (\left (\sum_{t=1}^T x_t\right )^{3/4}\right )$. Plugging in $x_t=\left (\lVert \bm q_s\rVert_\infty + VG)^2 (\lVert \bm q_s\rVert_2 + VL)^2\right )^{1/3}$,
\begin{align*}
&\quad \E\left [\sum_{t=1}^T (\ell_t(\bm \lambda_t)-\ell_t(\bm \lambda_t^\ast))\right ] \\
&=\O\left (\E\left [(C^r T^{1/2 - \delta_r})^{1/4} R \left (\sum_{t=1}^T \left ((\lVert \bm Q(t)\rVert_\infty + VG)^2 (\lVert \bm Q(t)\rVert_2 + VL)^2\right )^{1/3}\right )^{3/4} \right ]\right . + \\
&\qquad \left .\E\left [\frac{R}{r} \left (C^r T^{1/2 - \delta_r}\right )^{1/4} d^{2/3} \left (\sum_{t=1}^T \left ((\lVert \bm Q(t)\rVert_\infty + VG)^2 (\lVert \bm Q(t)\rVert_2 + VL)^2\right )^{1/3}\right )^{3/4} \right ] \right )\\
&=\O\left (\E\left [\left (\frac{R}{r} d^{2/3}+R\right ) (C^r T^{1/2 - \delta_r})^{1/4} \left (\sum_{t=1}^T \left (\lVert \bm Q(t)\rVert_2 + V(L+G)\right )^{4/3}\right )^{3/4}\right ]\right ),
\end{align*}
where the last step utilizes $\lVert \bm Q(t)\rVert_\infty \le \lVert \bm Q(t)\rVert_2$.

This almost recovers our conclusion, so it only remains to ensure $\alpha_t=\delta_t/r<1$, which is equivalent to $\eta_t d^2 C_t^2 / L_t < r^3$, \textit{i.e.},
\begin{equation*}
\eta_t^{-1} > r^{-3} d^2 \frac{(\lVert \bm Q(t)\rVert_\infty + VG)^2}{\lVert \bm Q(t)\rVert_2 + VL}.
\end{equation*}

Consider adding a term $X$ into the denominator of \Cref{eq:learning rate in BCO}.
As $\lVert \bm Q(t)\rVert_\infty \le \lVert \bm Q(t)\rVert_2$, $(\lVert \bm Q(t)\rVert_\infty + VG)^2 / (\lVert \bm Q(t)\rVert_2 + VL)\le 2 (\lVert \bm Q(t)\rVert_\infty^2 / \lVert \bm Q(t)\rVert_2) + 2 ((VG)^2 / (VL))\le 2 \lVert \bm Q(t)\rVert_\infty + 2 VG^2 / L$. So we only need to show
\begin{align*}
\frac{\left (X + \sum_{s=1}^t \left ((\lVert \bm q_s\rVert_\infty + VG)^2 (\lVert \bm q_s\rVert_2 + VL)^2\right )^{1/3}\right )^{3/4}}{(C^r T^{1/2 - \delta_r})^{3/4}} > 2 r^{-3} d^2 \left (\lVert \bm Q(t)\rVert_\infty + V \frac{G^2}{L}\right ).
\end{align*}

We decompose $X$ into $X_1$ and $X_2$ and use them to cancel the two terms on the RHS, respectively. That is, we want
\begin{align*}
X_1 + \sum_{s=1}^t \left (\lVert \bm q_s\rVert_\infty^2 \lVert \bm q_s\rVert_2^2\right )^{1/3} & > C^r T^{1/2 - \delta_r} \left (r^{-3} d^2 \lVert \bm Q(t)\rVert_\infty\right )^{4/3},\\
X_2 + t^{3/4} (V^4 G^2 L^2)^{1/4} & > C^r T^{1/2 - \delta_r} \left (r^{-3} d^2 V G^2 / L\right )^{4/3}.
\end{align*}

We first craft $X_1$.
In \Cref{lem:4/3 lower bound}, we show that if $x_1=0$, $x_2,\ldots,x_T \ge 0$, and $\lvert x_{t+1}-x_t\rvert\le 1$, then $\sum_{t=1}^T x_t^{4/3} \ge (x_T/4)^{7/3}$. Thus, using the fact that $\lVert \bm Q(t)\rVert_\infty\le \lVert \bm Q(t)\rVert_2$ and the queue length increment bound $\lvert \lVert \bm Q(t+1)\rVert_\infty - \lVert \bm Q(t)\rVert_\infty \rvert\le (2NM+R)$ (\Cref{lem:queue length increment}), we can let $x_t=\lVert \bm Q(t)\rVert_\infty / (2NM+R)$ and lower bound the LHS by
\begin{equation*}
X_1 + \sum_{s=1}^t \left (\lVert \bm q_s\rVert_\infty^2 \lVert \bm q_s\rVert_2^2\right )^{1/3}\ge X_1+\frac{\left (\lVert \bm Q(t)\rVert_\infty / 4 \right )^{7/3}}{(2NM+R)}\ge X_1^{3/7} \left (\frac{\left (\lVert \bm Q(t)\rVert_\infty / 4 \right )^{7/3}}{(2NM+R)}\right )^{4/7},
\end{equation*}
where the inequality results from AM-GM inequality $\frac{ax + by}{a+b} \ge \sqrt[a+b]{x^a y^b}$. Therefore, we only need to ensure $X_1^{3/7}\ge C^r T^{1/2-\delta_r} (4 r^{-3} d^2)^{4/3} (2NM+R)^{4/7}$. Setting $X_1$ as following then suffices.
\begin{equation*}
X_1=\left (C^r T^{1/2 - \delta_r}\right )^{7/3} \left (4r^{-3} d^2 \right )^{28/9} \left ((2NM+R)\right )^{4/3}.
\end{equation*}

For $X_2$, we only need to set $X_2 = C^r T^{1/2-\delta_r} (r^{-3} d^2 VG^2 / L)^{4/3}$.
Plugging back $X=X_1+X_2$, we get the learning rate scheduling defined in \Cref{eq:true learning rate in BCO appendix}.
Now we verify that the two terms on the RHS of \Cref{eq:BCO regret decomposition} does not increase too much due to $X$. For the first term,
\begin{align*}
&\quad \E\left [\frac{R^2 + C^r T^{1/2 - \delta_r} R}{\eta_T}\right ]\\
&=\left [\frac{R^2 + C^r T^{1/2 - \delta_r} R}{(C^r T^{1/2 - \delta_r})^{3/4}}\left (X + \sum_{t=1}^T \left ((\lVert \bm Q(t)\rVert_\infty + VG)^2 (\lVert \bm Q(t)\rVert_2 + VL)^2\right )^{1/3}\right )^{3/4}\right ]\\
&=\O\left (\E\left [\left (C^r T^{1/2 - \delta_r} R\right ) \left (\frac{X}{C^r T^{1/2 - \delta_r}}\right )^{3/4}+(C^r T^{1/2 - \delta_r})^{1/4} R \left (\sum_{t=1}^T \left (\lVert \bm Q(t)\rVert_2 + V(L+G)\right )^{4/3} \right )^{3/4}\right ]\right )\\
&=\O \left (\E\left [\frac{R(2NM+R)}{r^7} d^{14/3} (C^r T^{1/2-\delta_r})^2 + R (C^r T^{1/2 - \delta_r})^{1/4} \left (\sum_{s=1}^t \left (\lVert \bm q_s\rVert_2 + V(L+G)\right )^{4/3} \right )^{3/4}\right ]\right ).
\end{align*}

For the second term, as $\eta_t$ is strictly smaller than that in \Cref{eq:learning rate in BCO}, we can again apply \Cref{lem:3/4 summation lemma} (if $x_1,x_2,\ldots,x_T\ge 0$, then $\left .\sum_{t=1}^T x_t \middle / (\sum_{s\le t} x_s)^{1/4} \right . =\O\left (\left (\sum_{t=1}^T x_t\right )^{3/4}\right )$) to conclude
\begin{align*}
&\quad \E\left [\sum_{t=1}^T \left (\eta_t d^2 (\lVert \bm Q(t)\rVert_\infty + VG)^2 (\lVert \bm Q(t)\rVert_2 + VL)^2\right )^{1/3} \frac{R}{r}\right ]\\
&=\E\left [\frac{R}{r} d^{2/3} (C^r T^{1/2 - \delta_r})^{1/4} \left (\sum_{t=1}^T \left (\lVert \bm Q(t)\rVert_2 + V(L+G)\right )^{4/3}\right )^{3/4}\right ].
\end{align*}

Summing up two parts gives our conclusion.
\end{proof}

\subsection{Main Theorem for Multi-Hop Utility Maximization (Proof of \Cref{thm:multi-hop utility main theorem})}\label{sec:multi-hop utility appendix}
\begin{theorem}[Restatement of \Cref{thm:multi-hop utility main theorem}; Main Theorem for Multi-Hop Utility Maximization]
Suppose that the feasible set of arrival rates vector $\Lambda$ is bounded by $[r,R]$.
Assume all (unknown) utility functions $g_t$ to be concave, $L$-Lipschitz, and $[-G,G]$-bounded.
Consider a reference action sequence $\{(\mathring{\bm a}(t),\mathring{\bm \lambda}(t))\}_{t\in [T]}$ satisfying \Cref{def:multi-hop utility network stability}, such that their path lengths satisfy
\begin{align*}
P_t^a\triangleq \sum_{s=1}^{t-1} \lVert \mathring{\bm a}(s)-\mathring{\bm a}(s+1)\rVert_1\le C^a t^{1/2-\delta_a},~P_t^\lambda\triangleq \sum_{s=1}^{t-1} \lVert \mathring{\bm \lambda}(s)-\mathring{\bm \lambda}(s+1)\rVert_1\le C^\lambda t^{1/2 - \delta_\lambda},\quad \forall t\in [T].
\end{align*}
Here, $M,R,r,L,G,C^a,\delta_a,C^\lambda,\delta_\lambda$ are assumed to be known constants, whereas the specific $\{(\mathring{\bm a}(t),\mathring{\bm \lambda}(t))\}_{t\in [T]}$ remains unknown.
If we execute the \UMAlg framework in \Cref{alg:utility} with the $\OLO$ sub-rountine given in \Cref{alg:OLO unbounded} and the $\BCO$ sub-routine given in \Cref{alg:ABGD}, when $T$ is large enough such that $V=o_T(\min\{T^{2\delta_a/3},T^{2\delta_\lambda/7}\})$, the following inequalities hold simultaneously:
\begin{align*}
&\scalemath{0.95}{\frac 1T \E\left [\sum_{t=1}^T \lVert \bm Q(t)\rVert_1\right ]=\O\left (\frac{(N^2 (2NM+R)^2 + \epsilon_W N^2 (2NM+R)) C_W + (N^4 M^2+N^2 R^2)}{\epsilon_W}\right ) + o_T(1),}\\
&\scalemath{0.95}{\frac 1T \E\left [\sum_{t=1}^T \left (g_{t}(\mathring{\bm \lambda}(t)) - g_{t}(\bm \lambda(t))\right )\right ]=\O\left (\frac{(N^2 (2NM+R)^2 + \epsilon_W N^2 (2NM+R)) C_W + (N^4 M^2+N^2 R^2)}{V}\right ) + o_T(V^{-1}).}
\end{align*}
That is, when $T\gg 0$, our algorithm not only stabilizes the system so that $\frac 1T \E\left [\sum_{t=1}^T \lVert \bm Q(t)\rVert_1\right ]=\O_T(1)$, but also enjoys an average utility approaching that of the reference policy polynomially fast, \textit{i.e.}, $\frac 1T \E\left [\sum_{t=1}^T \left (g_{t}(\mathring{\bm \lambda}(t)) - g_{t}(\bm \lambda(t))\right )\right ]=\O_T(V^{-1})$ -- the utility maximization objective \Cref{eq:average reward} is ensured.
\end{theorem}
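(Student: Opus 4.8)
The plan is to collapse everything into a single ``master inequality'' that couples the cumulative queue length $y\triangleq\E[\sum_{t=1}^T\lVert\bm Q(t)\rVert_1]$ with the cumulative utility gap $U\triangleq\E[\sum_{t=1}^T(g_t(\mathring{\bm\lambda}(t))-g_t(\bm\lambda(t)))]$, and then to bootstrap through it. To build the master inequality I would chain the four ingredients stated above. First, the reference-policy lemma (\Cref{lem:multi-hop utility network stability}) lower-bounds $\epsilon_W y$, up to an $\O_T(T)$ constant, by $-\E[\sum_t\sum_{(n,m)}\langle\mathring{\bm\mu}_{n,m}(t),\bm Q_m(t)-\bm Q_n(t)\rangle]-\E[\sum_t\sum_n\langle\bm Q_n(t),\mathring{\bm\lambda}_n(t)\rangle]$. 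Second, the \OLOAlg guarantee (\Cref{thm:multi-hop stability decision making}) lets me swap $\mathring{\bm\mu}$ for the algorithm's $\bm\mu$; after applying \Cref{lem:2 upper bound} to turn $\sqrt{\sum_t\lVert\bm Q(t)\rVert_2^2}$ into a constant multiple of $y^{3/4}$ and using $\sqrt{1+P_T^a}=\O_T(T^{1/4-\delta_a/2})$, the cost is $g(T)\,y^{3/4}\log y$ with $g(T)=\O_T(T^{1/4-\delta_a/2})$. Third, the \BCOAlg guarantee (\Cref{thm:multi-hop utility decision making}) lets me swap $\mathring{\bm\lambda}$ for $\bm\lambda$; this introduces the term $-\tfrac{V}{\epsilon_W}U$ and, after an analogous power-mean/summation estimate on $\big(\sum_t(\lVert\bm Q(t)\rVert_2+V(L+G))^{4/3}\big)^{3/4}$, a cost $h(T)\,y^{7/8}$ with $h(T)=\O_T(T^{1/8-\delta_\lambda/4})$ plus further pieces that the conditions on $V$ keep at $\O_T(T)$. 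Fourth, non-negativity of the Lyapunov drift-plus-penalty (\Cref{lem:multi-hop utility Lyapunov}) absorbs the remaining algorithm-side inner products into one more $\O_T(T)$. Bookkeeping the constants --- this is \Cref{lem:multi-hop utility tedious calc} --- produces
\[ y\;\le\;-\tfrac{V}{\epsilon_W}\,U\;+\;f(T)\;+\;g(T)\,y^{3/4}\log y\;+\;h(T)\,y^{7/8},\qquad f(T)=\O_T(T). \]

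Given the master inequality, Step~1 is a crude queue bound: using only $|g_t|\le G$ I would bound $-U\le 2GT$, so $-\tfrac{V}{\epsilon_W}U=\O_T(VT)$, and then invoke the self-bounding lemma \Cref{lem:3/4 and 7/8 self-bounding} (``$y\le a+g\,y^{3/4}\log y+h\,y^{7/8}$ implies $y=\O(a)+\Otil(g^4)+\O(h^8)$'') together with $g(T)^4=\O_T(T^{1-2\delta_a})$ and $h(T)^8=\O_T(T^{1-2\delta_\lambda})$ to conclude $y=\O_T(VT)$, i.e.\ $\tfrac1T\E[\sum_t\lVert\bm Q(t)\rVert_1]=\O_T(V)$ --- too weak for stability but enough to continue. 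Step~2 derives the utility rate: since $y\ge0$, the master inequality rearranges to $\tfrac{V}{\epsilon_W}U\le f(T)+g(T)y^{3/4}\log y+h(T)y^{7/8}$; substituting the coarse bound $y=\O_T(VT)$ and dividing by $T$ turns the right side into $\O_T(1)+\Otil_T\big((V^3T^{-2\delta_a})^{1/4}\big)+\O_T\big((V^7T^{-2\delta_\lambda})^{1/8}\big)$, whose tails vanish as $T\to\infty$ precisely because $V=o_T(T^{2\delta_a/3})$ and $V=o_T(T^{2\delta_\lambda/7})$. Hence $\tfrac1T U=\O(V^{-1}\cdot\text{const})+o_T(V^{-1})$, the claimed utility guarantee.

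Step~3 re-closes the loop: the refined utility control now bounds $-\tfrac{V}{\epsilon_W}U$ by $\O_T(V\cdot V^{-1}T)=\O_T(T)$ instead of $\O_T(VT)$, so feeding it back into the master inequality and applying \Cref{lem:3/4 and 7/8 self-bounding} once more gives $y=\O_T(T)+\Otil_T(T^{1-2\delta_a})+\O_T(T^{1-2\delta_\lambda})=\O_T(T)$, that is $\tfrac1T\E[\sum_t\lVert\bm Q(t)\rVert_1]=\O_T(1)$. Recording the explicit leading constants from \Cref{lem:multi-hop utility tedious calc} then yields both displayed bounds of the theorem at once, with the $o_T$ remainders gathering everything the sub-linearity of $V$ makes lower order. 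The scheme mirrors the stability proof (\Cref{thm:multi-hop stability main theorem}) exactly, except for the extra $y^{7/8}$ term and the queue/utility coupling.

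The hard part is that coupling and the consequent two-pass structure: the utility gap enters the queue estimate multiplied by the polynomially large $V$, so the queues cannot be bounded before the utility is controlled, yet the only available utility bound is obtained by plugging in a loose queue bound. Keeping this non-circular forces careful tracking of the exponents $3/4$ (from composing the \OLOAlg regret's $\sqrt{\sum_t\lVert\bm Q(t)\rVert_2^2}$ with \Cref{lem:2 upper bound}) and $7/8$ (from the \BCOAlg regret's $\big(\sum_t(\lVert\bm Q(t)\rVert_2+V(L+G))^{4/3}\big)^{3/4}$), because these are exactly what pin down the admissible growth thresholds $T^{2\delta_a/3}$ and $T^{2\delta_\lambda/7}$ for $V$ and the matching powers in the self-bounding lemma. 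Everything else --- propagating the problem constants $N,M,R,r,L,G,\epsilon_W,C_W,C^a,C^\lambda$ through the chain and checking the Jensen/concavity steps --- is routine and I would relegate it to \Cref{lem:multi-hop utility tedious calc}.
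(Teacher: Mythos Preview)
Your proposal is correct and follows essentially the same approach as the paper's proof: you derive the master inequality via \Cref{lem:multi-hop utility tedious calc}, then execute the same three-step bootstrap (coarse queue bound via $\lvert g_t\rvert\le G$, utility convergence by rearranging and plugging in the coarse bound, refined queue bound by feeding back the utility control), invoking \Cref{lem:3/4 and 7/8 self-bounding} at each pass. The exponents, the growth thresholds on $V$, and the handling of constants all match.
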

\begin{proof}
As sketched in the main text, the first step is to \textit{i)} combine algorithmic guarantees for \OLOAlg (\Cref{thm:multi-hop stability decision making}) and \BCOAlg (\Cref{thm:multi-hop utility decision making}), \textit{ii)} plug in the network stability assumption \Cref{lem:multi-hop utility network stability}, and \textit{iii)} make use of the Lyapunov DPP analysis in \Cref{lem:multi-hop utility Lyapunov}. Deferring these calculations to \Cref{lem:multi-hop utility tedious calc}, we can get
\begin{align}
\E\left [\sum_{t=1}^T \lVert \bm Q(t)\rVert_1\right ]&\le -\frac{V}{\epsilon_W} \E\left [\sum_{t=1}^T \bigg (g_t(\mathring{\bm \lambda}(t)) - g_t(\bm \lambda(t))\bigg )\right ] + f(T) + \nonumber \\
&\quad g(T) \E\left [\sum_{t=1}^T \lVert \bm Q(t)\rVert_1\right ]^{3/4} \log \E\left [M \sum_{t=1}^T \lVert \bm Q(t)\rVert_1\right ] + \nonumber \\
&\quad h(T) \E\left [\sum_{t=1}^T \lVert \bm Q(t)\rVert_1\right ]^{7/8}.\label{eq:multi-hop utility self-bounding}
\end{align}
where
\begin{align*}
f(T)&= \epsilon_W^{-1} \O\left ((N^2 (2NM+R)^2 + \epsilon_W N^2 (2NM+R)) C_W T + \frac{R(2NM+R)}{r^7} d^{14/3} (C^\lambda T^{1/2 - \delta_\lambda})^2\right .+\\
&\qquad \qquad \left .\left (\frac{R}{r} d^{2/3}+R\right ) (C^r T^{1/2 - \delta_r})^{1/4} V(L+G) T^{3/4}+\frac 12 N^2 ((NM)^2+2(NM)^2+2R^2) T\right ),\\
g(T)&=\epsilon_W^{-1} \O\left ((2NM+R)^{1/4} M \sqrt{1+C^a T^{1/2 - \delta_a}} \log T \right ), \\
h(T)&=\epsilon_W^{-1} \O\left ((2NM+R)^{1/8} \left (\frac Rr d^{2/3} + R\right ) (C^\lambda T^{1/2 - \delta_\lambda})^{1/4}\right ).
\end{align*}

\paragraph{Step 1 (Develop a Coarse Average Queue Length Bound).}
Recall the assumption that $g_t$ is uniformly bounded by $[-G,G]$. Therefore, the first term on the RHS of \Cref{eq:multi-hop utility self-bounding} is bounded by $2\frac{V}{\epsilon_W} GT$ in absolute value.
In \Cref{lem:3/4 and 7/8 self-bounding}, we develop a self-bounding property that says, if $y\le f+y^{3/4} g \log y +y^{7/8}$, then $y=\O\left (f+g^4 \log^8\left (2(f^{1/8}+g^{1/2}+h)^2\right )+h^8\right )$.
Therefore, applying it to \Cref{eq:multi-hop utility self-bounding}, we have
\begin{align}
\E\left [\sum_{t=1}^T \lVert \bm Q(t)\rVert_1\right ]&\le \O\left (2\frac{V}{\epsilon_W} GT + f(T) + g(T)^4 \log^8\left (2(f(T)^{1/8}+g(T)^{1/2}+h(T))^2\right ) + h(T)^8\right ) \nonumber\\
&=\O\left (\frac{V}{\epsilon_W} GT + f(T)\right ) + \O_T\left (T^{1-2\delta_a} \log^8 \left (T^{1/4} + T^{1/4-\delta_a/2} + T^{1/4-\delta_\lambda/2}\right ) + T^{1-2\delta_\lambda}\right ) \nonumber\\
&=\O\left (\frac{V}{\epsilon_W} GT + f(T)\right )+o_T(1). \label{eq:coarse queue length bound}
\end{align}

As mentioned in the proof sketch of this theorem, this only gives a $\frac 1T\E[\sum_{t=1}^T \lVert \bm Q(t)\rVert_1]=\O_T(V)$ bound on the average queue length, which violates the system stability condition \Cref{eq:average queue}. However, this inequality can be used to derive the polynomial convergence result on the utility, which in turn refines the average queue length bound.

\paragraph{Step 2 (Yield Polynomial Convergence on the Utility).}
Moving the difference in the average utility in \Cref{eq:multi-hop utility self-bounding} to the LHS, we have
\begin{align*}
\frac{V}{\epsilon_W} \E\left [\sum_{t=1}^T \bigg (g_t(\mathring{\bm \lambda}(t)) - g_t(\bm \lambda(t))\bigg )\right ]&\le -\E\left [\sum_{t=1}^T \lVert \bm Q(t)\rVert_1\right ] + f(T) + \nonumber \\
&\quad g(T) \E\left [\sum_{t=1}^T \lVert \bm Q(t)\rVert_1\right ]^{3/4} \log \E\left [M \sum_{t=1}^T \lVert \bm Q(t)\rVert_1\right ] + \nonumber \\
&\quad h(T) \E\left [\sum_{t=1}^T \lVert \bm Q(t)\rVert_1\right ]^{7/8}.
\end{align*}

Plugging in the just-derived bound on average queue length, namely \Cref{eq:coarse queue length bound}, we have
\begin{align*}
&\quad \frac{V}{\epsilon_W} \E\left [\sum_{t=1}^T \bigg (g_t(\mathring{\bm \lambda}(t)) - g_t(\bm \lambda(t))\bigg )\right ]\\
&\le 0 + f(T) + g(T) \O\left (\left (\frac{V}{\epsilon_W} GT + f(T)\right )^{3/4} \log \left (\frac{V}{\epsilon_W} GT + f(T)\right ) + h(T) \left (\frac{V}{\epsilon_W} GT + f(T)\right )^{7/8}\right )\\
&=f(T) + \O_T \left (T^{1/4 - \delta_a/2} (VT)^{3/4} \log(VT) + T^{1/8 - \delta_\lambda / 4} (VT)^{7/8}\right ).
\end{align*}

According to the assumption that $V=o_T(\min\{T^{2\delta_a/3},T^{2\delta_\lambda/7}\})$, the second term on the RHS is of order $o_T(T)$. Therefore, we have
\begin{align}
&\quad \frac 1T \E\left [\sum_{t=1}^T \left (g_t(\mathring{\bm \lambda}(t)) - g_t(\bm \lambda(t)))\right )\right ]=\frac{\epsilon_W}{VT} f(T) + \frac{\epsilon_W}{VT} o_T(T) \nonumber \\
&=(VT)^{-1} \O\left ((N^2 (2NM+R)^2 + \epsilon_W N^2 (2NM+R)) C_W T + \frac{R(2NM+R)}{r^7} d^{14/3} (C^\lambda T^{1/2 - \delta_\lambda})^2\right .+ \nonumber\\
&\qquad \qquad \qquad \left .\left (\frac{R}{r} d^{2/3}+R\right ) (C^r T^{1/2 - \delta_r})^{1/4} V(L+G) T^{3/4}+\frac 12 N^2 ((NM)^2+2(NM)^2+2R^2) T\right )+o_T(V^{-1}) \nonumber\\
&=\O\left (\frac{(N^2 (2NM+R)^2 + \epsilon_W N^2 (2NM+R)) C_W + (N^4 M^2+N^2 R^2)}{V}\right ) + o_T(V^{-1}). \label{eq:refined utility bound}
\end{align}

The second conclusion of this theorem follows.

\paragraph{Step 3 (Refine the Average Queue Length Bound).}
Now we are ready to refine our average queue length bound using \Cref{eq:refined utility bound}. Instead of controlling the utility with the uniform boundedness assumption that $g_t\in [-G,G]$, we utilize the just-derived convergence result \Cref{eq:refined utility bound}.

Specifically, again applying the self-bounding property in \Cref{lem:3/4 and 7/8 self-bounding} to \Cref{eq:multi-hop utility self-bounding} but instead replacing the first term on the RHS with \Cref{eq:refined utility bound}, we get
\begin{align*}
\E\left [\sum_{t=1}^T \lVert \bm Q(t)\rVert_1\right ]&\le \O\left (f(T) + o_T(T) + f(T) + g(T)^4 \log^8\left (2(f(T)^{1/8}+g(T)^{1/2}+h(T))^2\right ) + h(T)^8\right )\\
&=\O(f(T)) + \O\left (g(T)^4 \log^8\left (2(f(T)^{1/8}+g(T)^{1/2}+h(T))^2\right ) + h(T)^8\right ) + o_T(T) \\
&=\O\left (\frac{(N^2 (2NM+R)^2 + \epsilon_W N^2 (2NM+R)) C_W + (N^4 M^2+N^2 R^2)}{\epsilon_W} T\right ) + o_T(T),
\end{align*}
which gives our first conclusion as well.
\end{proof}

\begin{lemma}[Calculations when Proving \Cref{thm:multi-hop utility main theorem}]\label{lem:multi-hop utility tedious calc}
Under the conditions of \Cref{thm:multi-hop utility main theorem}, we have
\begin{align*}
\E\left [\sum_{t=1}^T \lVert \bm Q(t)\rVert_1\right ]&\le -\frac{V}{\epsilon_W} \E\left [\sum_{t=1}^T \bigg (g_t(\mathring{\bm \lambda}(t)) - g_t(\bm \lambda(t))\bigg )\right ] + f(T) + \\
&\quad g(T) \E\left [\sum_{t=1}^T \lVert \bm Q(t)\rVert_1\right ]^{3/4} \log \E\left [M \sum_{t=1}^T \lVert \bm Q(t)\rVert_1\right ] + \\
&\quad h(T) \E\left [\sum_{t=1}^T \lVert \bm Q(t)\rVert_1\right ]^{7/8}.
\end{align*}
where
\begin{align*}
f(T)&= \epsilon_W^{-1} \O\left ((N^2 (2NM+R)^2 + \epsilon_W N^2 (2NM+R)) C_W T + \frac{R(2NM+R)}{r^7} d^{14/3} (C^\lambda T^{1/2 - \delta_\lambda})^2\right .+\\
&\qquad \qquad \left .\left (\frac{R}{r} d^{2/3}+R\right ) (C^r T^{1/2 - \delta_r})^{1/4} V(L+G) T^{3/4}+\frac 12 N^2 ((NM)^2+2(NM)^2+2R^2) T\right ),\\
g(T)&=\epsilon_W^{-1} \O\left ((2NM+R)^{1/4} M \sqrt{1+C^a T^{1/2 - \delta_a}} \log T \right ), \\
h(T)&=\epsilon_W^{-1} \O\left ((2NM+R)^{1/8} \left (\frac Rr d^{2/3} + R\right ) (C^\lambda T^{1/2 - \delta_\lambda})^{1/4}\right ).
\end{align*}
\end{lemma}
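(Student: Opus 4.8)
The plan is to chain the four ingredients shown in \Cref{fig:flowchart} — the reference-policy bound \Cref{lem:multi-hop utility network stability}, the Lyapunov drift-plus-penalty inequality \Cref{lem:multi-hop utility Lyapunov} (equivalently, the non-negativity of the DPP drift), the \OLOAlg guarantee \Cref{thm:multi-hop stability decision making}, and the \BCOAlg guarantee \Cref{thm:multi-hop utility decision making} — and then apply two self-bounding summation lemmas to re-express everything in terms of $\E[\sum_{t=1}^T \lVert \bm Q(t)\rVert_1]$. The hypotheses required to invoke \Cref{thm:multi-hop stability decision making} and \Cref{thm:multi-hop utility decision making} (the path-length bounds on $\{\mathring{\bm a}(t)\}$ and $\{\mathring{\bm\lambda}(t)\}$, the boundedness $r\mathbb B\subseteq\Lambda\subseteq R\mathbb B$, and the concavity/Lipschitzness/boundedness of $g_t$) are all inherited directly from \Cref{thm:multi-hop utility main theorem}, so no new conditions arise.

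First I would start from \Cref{lem:multi-hop utility network stability}, which bounds $\epsilon_W \E[\sum_t \lVert \bm Q(t)\rVert_1]$, up to the additive constant $(N^2(2NM+R)^2+\epsilon_W N^2(2NM+R))C_W T$, by $-\E[\sum_t\sum_{(n,m)\in\mL}\langle\mathring{\bm\mu}_{n,m}(t),\bm Q_m(t)-\bm Q_n(t)\rangle] - \E[\sum_t\sum_{n\in\mN}\langle\bm Q_n(t),\mathring{\bm\lambda}_n(t)\rangle]$. I would then swap the reference quantities for the scheduler's own by writing $-\mathring{\bm\mu}=-\bm\mu+(\bm\mu-\mathring{\bm\mu})$ and $-\mathring{\bm\lambda}=-\bm\lambda+(\bm\lambda-\mathring{\bm\lambda})$: the $\bm\mu-\mathring{\bm\mu}$ piece is controlled by \Cref{thm:multi-hop stability decision making}, and the $\langle\bm Q(t),\bm\lambda(t)-\mathring{\bm\lambda}(t)\rangle$ piece is handled by \Cref{thm:multi-hop utility decision making} after adding and subtracting $V g_t$, which yields $\E[\sum_t\langle\bm Q(t),\bm\lambda(t)-\mathring{\bm\lambda}(t)\rangle]\le(\text{BCO bound})+V\E[\sum_t(g_t(\bm\lambda(t))-g_t(\mathring{\bm\lambda}(t)))]$. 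The leftover $-\E[\sum_t\sum_{(n,m)}\langle\bm\mu_{n,m}(t),\bm Q_m(t)-\bm Q_n(t)\rangle]-\E[\sum_t\sum_n\langle\bm Q_n(t),\bm\lambda_n(t)\rangle]$ is at most $\tfrac12 N^2((NM)^2+2(NM)^2+2R^2)T$ by the non-negativity of the Lyapunov drift (\Cref{lem:multi-hop stability Lyapunov}). Collecting terms gives $\epsilon_W\E[\sum_t\lVert\bm Q(t)\rVert_1]\le -V\E[\sum_t(g_t(\mathring{\bm\lambda}(t))-g_t(\bm\lambda(t)))] + \O(\text{const}\cdot T) + (\text{OLO bound}) + (\text{BCO bound})$, where the OLO bound is $\Otil(M\sqrt{1+P_T^a}\,\E[\sqrt{\sum_t\lVert\bm Q(t)\rVert_2^2}\,\log T\log(\max_{t,(n,m)}M\lVert\bm Q_m(t)-\bm Q_n(t)\rVert_\infty)])$ and the BCO bound has leading term $\O(\tfrac{R(2NM+R)}{r^7}d^{14/3}(C^\lambda T^{1/2-\delta_\lambda})^2)$ and adaptive term $\O(\E[(\tfrac Rr d^{2/3}+R)(C^\lambda T^{1/2-\delta_\lambda})^{1/4}(\sum_t(\lVert\bm Q(t)\rVert_2+V(L+G))^{4/3})^{3/4}])$.

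The remaining work is to turn the two residual $\bm Q$-dependent expressions into powers of $\E[\sum_t\lVert\bm Q(t)\rVert_1]$. For the OLO term: applying \Cref{lem:2 upper bound} to $\{Q_n^{(k)}(t)/(2NM+R)\}_t$ for each $n,k$ — legitimate since $\lvert Q_n^{(k)}(t+1)-Q_n^{(k)}(t)\rvert\le 2NM+R$ by \Cref{lem:queue length increment} — gives $\sum_t\lVert\bm Q(t)\rVert_2^2=\O((2NM+R)^{1/2}(\sum_t\lVert\bm Q(t)\rVert_1)^{3/2})$, hence $\sqrt{\sum_t\lVert\bm Q(t)\rVert_2^2}=\O((2NM+R)^{1/4}(\sum_t\lVert\bm Q(t)\rVert_1)^{3/4})$; I then bound the log argument crudely by $M\sum_t\lVert\bm Q(t)\rVert_1$ and push the expectation inside by Jensen, using concavity of $x\mapsto x^{3/4}\log(Mx)$ for large $x$, producing the $g(T)\E[\sum_t\lVert\bm Q(t)\rVert_1]^{3/4}\log\E[M\sum_t\lVert\bm Q(t)\rVert_1]$ term. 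For the BCO adaptive term: $\lVert\bm Q(t)\rVert_2\le\lVert\bm Q(t)\rVert_1$ and $(\lVert\bm Q(t)\rVert_2+V(L+G))^{4/3}=\O(\lVert\bm Q(t)\rVert_1^{4/3}+(V(L+G))^{4/3})$; the $(V(L+G))^{4/3}$ piece summed over $t$ contributes the $(\tfrac Rr d^{2/3}+R)(C^\lambda T^{1/2-\delta_\lambda})^{1/4}V(L+G)T^{3/4}$ term absorbed into $f(T)$, while for $\sum_t\lVert\bm Q(t)\rVert_1^{4/3}$ I invoke the $p=4/3$ analogue of \Cref{lem:2 upper bound} (the upper-bound counterpart to \Cref{lem:4/3 lower bound}, proved identically), namely $\sum_t\lVert\bm Q(t)\rVert_1^{4/3}=\O((\sum_t\lVert\bm Q(t)\rVert_1)^{7/6})$, so $(\sum_t\lVert\bm Q(t)\rVert_1^{4/3})^{3/4}=\O((\sum_t\lVert\bm Q(t)\rVert_1)^{7/8})$ after one more Jensen step; this yields the $h(T)\E[\sum_t\lVert\bm Q(t)\rVert_1]^{7/8}$ term. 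Dividing through by $\epsilon_W$ and reading off which coefficient belongs to which term defines $f(T),g(T),h(T)$ exactly as stated.

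I expect the main obstacle to be bookkeeping rather than anything conceptual: tracking every constant through the substitutions so it lands in the correct one of $f,g,h$, confirming that the two self-bounding summation lemmas produce precisely the exponents $3/4$ and $7/8$, and checking the Jensen steps are valid (concavity of $x\mapsto x^{3/4}\log(Mx)$ and of $x\mapsto x^{7/8}$ on the relevant range, where $x$ stands for a large total queue length). Once the four input bounds are lined up the algebra is routine, so the only real care needed is in the accounting.
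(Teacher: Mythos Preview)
Your proposal is correct and follows essentially the same route as the paper's proof: chain \Cref{lem:multi-hop utility network stability}, \Cref{thm:multi-hop stability decision making}, \Cref{thm:multi-hop utility decision making}, and the Lyapunov drift bound, then convert the two queue-dependent residuals via \Cref{lem:2 upper bound} and its $4/3$ analogue (\Cref{lem:4/3 upper bound}) together with Jensen. The only cosmetic difference is that the paper applies \Cref{lem:4/3 upper bound} componentwise to each $Q_n^{(k)}(t)/(2NM+R)$ (using $\lVert\bm Q(t)\rVert_2^{4/3}\le\sum_{n,k}Q_n^{(k)}(t)^{4/3}$) rather than to $\lVert\bm Q(t)\rVert_1$ as you do, which is what produces the stated $(2NM+R)^{1/8}$ factor in $h(T)$ without an extra power of $N$.
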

\begin{proof}
From the network stability assumption, we derived in \Cref{lem:multi-hop utility network stability} that
\begin{align*}
&\quad \epsilon_W \E\left [\sum_{t=1}^T \sum_{n\in \mN} \sum_{k\in \mN} Q_n^{(k)}(t)\right ] - (N^2 (2NM+R)^2 + \epsilon_W N^2 (2NM+R)) C_W T \nonumber\\
&\le -\E\left [\sum_{t=1}^T \sum_{(n,m)\in \mL}\sum_{k\in \mN} \mathring \mu_{n,m}^{(k)}(t) (Q_m^{(k)}(t) - Q_n^{(k)}(t))\right ] - \E\left [\sum_{t=1}^T \sum_{n\in \mN} \sum_{k\in \mN} Q_n^{(k)}(t) \mathring \lambda_n^{(k)}(t)\right ].
\end{align*}

Recall the \OLOAlg guarantee in \Cref{thm:multi-hop stability decision making} that
\begin{align*}
&\quad \E\left [\sum_{t=1}^T \sum_{(n,m)\in \mL} \sum_{k\in \mN} (\mu_{n,m}^{(k)}(t)-\mathring{\mu}_{n,m}^{(k)}(t))\left (Q_m^{(k)}(t)-Q_n^{(k)}(t)\right )\right ]\\
&=\O\left (M\sqrt{1+P_T^a} \E\left [\sqrt{\sum_{t=1}^T \lVert \bm Q(t)\rVert_2^2} \log T \log \left (\max_{t=1}^T \max_{(n,m)\in \mL} M \lVert \bm Q_m(t) - \bm Q_n(t)\rVert_\infty\right )\right ]\right )\text{,}
\end{align*}
and the Bandit Convex Optimization guarantee in \Cref{thm:multi-hop utility decision making} that
\begin{align*}
&\scalemath{0.95}{\quad \E\left [\sum_{t=1}^T \left ((\langle \bm Q(t),\bm \lambda(t)\rangle - Vg_t(\bm \lambda(t))) - (\langle \bm Q(t),\mathring{\bm \lambda}(t)\rangle - Vg_t(\mathring{\bm \lambda}(t)))\right )\right ]}\\
&\scalemath{0.95}{=\O\left (\frac{R(2NM+R)}{r^7} d^{14/3} (C^\lambda T^{1/2-\delta_\lambda})^2\right )+\O\left (\E\left [\left (\frac{R}{r} d^{2/3}+R\right ) (C^\lambda T^{1/2 - \delta_\lambda})^{1/4} \left (\sum_{t=1}^T \left (\lVert \bm Q(t)\rVert_2 + V(L+G)\right )^{4/3}\right )^{3/4}\right ]\right ),}
\end{align*}
we therefore have
\begin{align*}
&\quad \epsilon_W \E\left [\sum_{t=1}^T \sum_{n\in \mN} \sum_{k\in \mN} Q_n^{(k)}(t)\right ] - (N^2 (2NM+R)^2 + \epsilon_W N^2 (2NM+R)) C_W T \nonumber\\
&\le -\E\left [\sum_{t=1}^T \sum_{(n,m)\in \mL}\sum_{k\in \mN} \mu_{n,m}^{(k)}(t) (Q_m^{(k)}(t) - Q_n^{(k)}(t))\right ] + \\
&\quad \O\left (M \sqrt{1+P_T^a} \E\left [\sqrt{\sum_{t=1}^T \lVert \bm Q(t)\rVert_2^2} \log T \log \left (\max_{t=1}^T \max_{(n,m)\in \mL} M \lVert \bm Q_m(t) - \bm Q_n(t)\rVert_\infty\right )\right ]\right )+\\
&\quad - \E\left [\sum_{t=1}^T \sum_{n\in \mN} \sum_{k\in \mN} Q_n^{(k)}(t) \lambda_n^{(k)}(t)\right ] - V \E\left [\sum_{t=1}^T \bigg (g_t(\mathring{\bm \lambda}(t))-g_t(\bm \lambda(t))\bigg )\right ]+\\
&\quad \O\left (\frac{R(2NM+R)}{r^7} d^{14/3} (C^\lambda T^{1/2-\delta_\lambda})^2\right )+\\
&\quad \O\left (\E\left [\left (\frac{R}{r} d^{2/3}+R\right ) (C^\lambda T^{1/2 - \delta_\lambda})^{1/4} \left (\sum_{t=1}^T \left (\lVert \bm Q(t)\rVert_2 + V(L+G)\right )^{4/3}\right )^{3/4}\right ]\right ).
\end{align*}

Further plugging in the Lyapunov DPP calculation in \Cref{eq:multi-hop utility Lyapunov} (which controls the three $\E[\sum_{t=1}^T \cdots]$ terms outside $\O$ on the RHS), we have
\begin{align*}
&\quad \epsilon_W \E\left [\sum_{t=1}^T \sum_{n\in \mN} \sum_{k\in \mN} Q_n^{(k)}(t)\right ] - (N^2 (2NM+R)^2 + \epsilon_W N^2 (2NM+R)) C_W T \nonumber\\
&\le \O\left (M \sqrt{1+P_T^a} \E\left [\sqrt{\sum_{t=1}^T \lVert \bm Q(t)\rVert_2^2} \log T \log \left (\max_{t=1}^T \max_{(n,m)\in \mL} M \lVert \bm Q_m(t) - \bm Q_n(t)\rVert_\infty\right )\right ]\right )+\\
&\quad \O\left (\frac{R(2NM+R)}{r^7} d^{14/3} (C^\lambda T^{1/2-\delta_\lambda})^2\right )+\\
&\quad \O\left (\E\left [\left (\frac{R}{r} d^{2/3}+R\right ) (C^\lambda T^{1/2 - \delta_\lambda})^{1/4} \left (\sum_{t=1}^T \left (\lVert \bm Q(t)\rVert_2 + V(L+G)\right )^{4/3}\right )^{3/4}\right ]\right )+\\
&\quad \frac 12 N^2 ((NM)^2+2(NM)^2+2R^2)+V  \E\left [\sum_{t=1}^T (g_t(\bm \lambda(t))-g_t(\mathring{\bm \lambda}(t)))\right ].
\end{align*}

For notational simplicity, we can abbreviate this inequality as
\begin{align*}
&\quad \E\left [\sum_{t=1}^T \lVert \bm Q(t)\rVert_1\right ]\le -\frac{V}{\epsilon_W} \E\left [\sum_{t=1}^T \bigg (g_t(\mathring{\bm \lambda}(t)) - g_t(\bm \lambda(t))\bigg )\right ]+\tilde f(T)+\\
&\quad \tilde g(T) \sqrt{\E\left [\sum_{t=1}^T \lVert \bm Q(t)\rVert_2^2\right ]} \log \left (\max_{t=1}^T \max_{(n,m)\in \mL} M \lVert \bm Q_m(t)-\bm Q_n(t)\rVert_\infty\right ) + \tilde h(T) \left (\E\left [\sum_{t=1}^T \lVert \bm Q(t)\rVert_2^{4/3}\right ]\right )^{3/4},
\end{align*}
where
\begin{align*}
\tilde f(T)&= \epsilon_W^{-1} \O\left ((N^2 (2NM+R)^2 + \epsilon_W N^2 (2NM+R)) C_W T + \frac{R(2NM+R)}{r^7} d^{14/3} (C^\lambda T^{1/2 - \delta_\lambda})^2\right .+\\
&\qquad \qquad \left .\left (\frac{R}{r} d^{2/3}+R\right ) (C^r T^{1/2 - \delta_r})^{1/4} V(L+G) T^{3/4}+\frac 12 N^2 ((NM)^2+2(NM)^2+2R^2) T\right ),\\
\tilde g(T)&=\epsilon_W^{-1} \O\left (M \sqrt{1+C^a T^{1/2 - \delta_a}} \log T \right ), \\
\tilde h(T)&=\epsilon_W^{-1} \O\left (\left (\frac Rr d^{2/3} + R\right ) (C^\lambda T^{1/2 - \delta_\lambda})^{1/4}\right ).
\end{align*}

To handle the $\tilde g(T)$-related term, we use the argument same to that of \Cref{sec:multi-hop stability main theorem}:
\Cref{lem:2 upper bound} states that if $x_1=0$, $x_2,\ldots,x_T \ge 0$, and $\lvert x_{t+1}-x_t\rvert\le 1$, then $\sum_{t=1}^T x_t^2 = \O\left ((\sum_{t=1}^T x_t)^{3/2}\right )$. From \Cref{lem:queue length increment}, any single queue $Q_n^{(k)}(t)$ satisfies $\lvert Q_n^{(k)}(t+1)-Q_n^{(k)}(t)\rvert \le (2NM + R)$.
Hence, applying \Cref{lem:2 upper bound} to $\{Q_n^{(k)}(t) / (2NM+R)\}_{t\in [T]}$ to every $n\in \mN$ and $k\in \mN$, we have
\begin{equation*}
\sum_{t=1}^T \lVert \bm Q(t)\rVert_2^2 = (2NM+R)^2 \sum_{t=1}^T \sum_{n\in \mN} \sum_{k\in \mN} \left (\frac{Q_n^{(k)}(t)}{2NM+R}\right )^2 =\O\left (\sqrt{2NM+R} \left (\sum_{t=1}^T \lVert \bm Q(t)\rVert_1\right )^{1.5}\right ).
\end{equation*}

Further noticing that
\begin{equation*}
\max_{t=1}^T \max_{(n,m)\in \mL} M \lVert \bm Q_m(t) - \bm Q_n(t)\rVert_\infty\le \sum_{t=1}^T M \sum_{n\in \mN} \lVert \bm Q_n(t)\rVert_1\le M \sum_{t=1}^T \lVert \bm Q(t)\rVert_1,
\end{equation*}
the $\tilde g(T)$-related term then becomes
\begin{align*}
&\quad \tilde g(T) \sqrt{\E\left [\sum_{t=1}^T \lVert \bm Q(t)\rVert_2^2\right ]} \log \left (\max_{t=1}^T \max_{(n,m)\in \mL} M \lVert \bm Q_m(t)-\bm Q_n(t)\rVert_\infty\right )\\
&=\tilde g(T) \O\left ((2NM+R)^{1/4} \E\left [\left (\sum_{t=1}^T \lVert \bm Q(t)\rVert_1\right )^{3/4} \log \left (M \sum_{t=1}^T \lVert \bm Q(t)\rVert_1\right )\right ]\right ).
\end{align*}

Noticing that $x\mapsto x^{3/4} \log (M x)$ is concave when $x$ is large enough, Jensen inequality then gives
\begin{equation*}
\O\left (\E\left [\left (\sum_{t=1}^T \lVert \bm Q(t)\rVert_1\right )^{3/4} \log \left (M \sum_{t=1}^T \lVert \bm Q(t)\rVert_1\right ) \right ]\right )= \O\left (\E\left [\sum_{t=1}^T \lVert \bm Q(t)\rVert_1\right ]^{3/4} \log \E \left [ \sum_{t=1}^T \lVert \bm Q(t)\rVert_1\right ]\right ).
\end{equation*}

Moreover, we handle the $\tilde h(T)$-related term using \Cref{lem:4/3 upper bound}, a variant of \Cref{lem:2 upper bound} which states that if $x_1=0$, $x_2,\ldots,x_T \ge 0$, and $\lvert x_{t+1}-x_t\rvert\le 1$, then $\sum_{t=1}^T x_t^{4/3} = \O\left ((\sum_{t=1}^T x_t)^{7/6}\right )$. Hence, still applying it to $\{Q_n^{(k)}(t) / (2NM + R)\}_{t\in [T]}$ for every $n\in \mN$ and $k\in \mN$,
\begin{align*}
\sum_{t=1}^T \lVert \bm Q(t)\rVert_2^{4/3}&=\sum_{t=1}^T \left (\sum_{n\in \mN} \sum_{k\in \mN} Q_n^{(k)}(t)^2\right )^{2/3}\le \sum_{t=1}^T \sum_{n\in \mN} \sum_{k\in \mN} Q_n^{(k)}(t)^{4/3}\\
&=\O\left (\left (2NM+R\right )^{1/6} \left (\sum_{t=1}^T \lVert \bm Q(t)\rVert_1\right )^{7/6}\right ).
\end{align*}

Therefore, we have
\begin{align*}
\E\left [\sum_{t=1}^T \lVert \bm Q(t)\rVert_1\right ]&\le -\frac{V}{\epsilon_W} \E\left [\sum_{t=1}^T \bigg (g_t(\mathring{\bm \lambda}(t)) - g_t(\bm \lambda(t))\bigg )\right ] + \tilde f(T) + \\
&\quad \tilde g(T) \O\left ((2NM+R)^{1/4}\right ) \E\left [\sum_{t=1}^T \lVert \bm Q(t)\rVert_1\right ]^{3/4} \log \E\left [M \sum_{t=1}^T \lVert \bm Q(t)\rVert_1\right ] + \\
&\quad \tilde h(T) \O\left ((2NM+R)^{1/8}\right ) \E\left [\sum_{t=1}^T \lVert \bm Q(t)\rVert_1\right ]^{7/8}.
\end{align*}

Setting $f(T)=\tilde f(T)$, $g(T)=\tilde g(T) \O((2NM+R)^{1/4})$, and $h(T)=\tilde h(T) \O((2NM+R)^{1/8})$ gives our conclusion.
\end{proof}

\section{Auxiliary Lemmas}
The first lemma extends the famous summation lemma $\sum_{t=1}^T \frac{x_t}{\sqrt{\sum_{s=1}^t x_s}}=\O\left (\sqrt{\sum_{t=1}^T x_t}\right )$ \citep{auer2002nonstochastic}.
\begin{lemma}\label{lem:3/4 summation lemma}
For non-negative real numbers $x_1,x_2,\ldots,x_T\in \mathbb R$, we have
\begin{equation*}
\sum_{t=1}^T \frac{x_t}{\left (\sum_{s=1}^t x_s\right )^{1/4}} \le 2 \left (\sum_{t=1}^T x_t\right )^{3/4}.
\end{equation*}
\end{lemma}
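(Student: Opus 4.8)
The plan is to recognize the left-hand side as a lower Riemann sum for the convergent improper integral $\int_0^{S_T} u^{-1/4}\,du$, where I write $S_t \triangleq \sum_{s=1}^t x_s$ for the partial sums and set $S_0 \triangleq 0$. First I would dispose of degenerate terms: whenever $x_t = 0$ the summand vanishes (using the convention $0/0 = 0$, or equivalently just discarding those indices), so it suffices to bound the sum over those $t$ with $x_t > 0$, and for every such index $S_t > 0$, so no division by zero occurs. The main step is then a one-line comparison: since $u \mapsto u^{-1/4}$ is decreasing on $(0,\infty)$ and $u \le S_t$ for all $u \in [S_{t-1}, S_t]$, we have $u^{-1/4} \ge S_t^{-1/4}$ on that interval, whence
\[
\frac{x_t}{S_t^{1/4}} = (S_t - S_{t-1}) \cdot S_t^{-1/4} \le \int_{S_{t-1}}^{S_t} u^{-1/4}\,du .
\]
Summing over $t$ and telescoping (the intervals $[S_{t-1}, S_t]$ tile $[0, S_T]$) gives $\sum_{t=1}^T \frac{x_t}{S_t^{1/4}} \le \int_0^{S_T} u^{-1/4}\,du = \tfrac{4}{3} S_T^{3/4} \le 2 S_T^{3/4}$, which is the claim with a bit of slack in the constant.

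Honestly, there is no real obstacle here; the only points needing a word of care are (i) the $0/0$ convention / dropping zero terms so a vanishing partial sum is never in a denominator, and (ii) noting that the improper integral $\int_0^{S_T} u^{-1/4}\,du$ converges because the exponent $-1/4 > -1$. If one prefers to avoid integrals altogether, the same bound follows by induction on $T$: assuming $\sum_{t=1}^{T-1} x_t S_t^{-1/4} \le \tfrac{4}{3} S_{T-1}^{3/4}$, it remains to check $\tfrac{4}{3} S_T^{3/4} - \tfrac{4}{3} S_{T-1}^{3/4} \ge x_T S_T^{-1/4}$, i.e. $\tfrac{4}{3}(a+b)^{3/4} - \tfrac{4}{3} a^{3/4} \ge b\,(a+b)^{-1/4}$ for $a = S_{T-1} \ge 0$, $b = x_T \ge 0$, which is immediate from the mean value theorem applied to $u \mapsto \tfrac{4}{3} u^{3/4}$ on $[a, a+b]$ (its derivative $u^{-1/4}$ is minimized at the right endpoint). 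I would present the integral-comparison version, since it is the cleanest and matches the style of the companion lemmas (\Cref{lem:2 upper bound}, \Cref{lem:4/3 upper bound}) that will be proved by the same device.
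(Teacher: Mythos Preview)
Your proof is correct and takes a genuinely different route from the paper. The paper proceeds by induction on $T$ and, for the inductive step, verifies $x_T/S_T^{1/4}\le 2S_T^{3/4}-2S_{T-1}^{3/4}$ via the algebraic identity $\bigl(S_T^{3/4}-S_{T-1}^{3/4}\bigr)\bigl(S_T^{1/4}+S_{T-1}^{1/4}\bigr)\ge x_T$ together with $S_T^{1/4}+S_{T-1}^{1/4}\le 2S_T^{1/4}$. Your integral-comparison argument instead interprets the sum as a lower Riemann sum for $\int_0^{S_T}u^{-1/4}\,du$, which is both shorter and yields the sharper constant $4/3$ rather than $2$. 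Your secondary induction sketch (mean value theorem on $u\mapsto\tfrac{4}{3}u^{3/4}$) is also distinct from the paper's factoring trick and again delivers the better constant. One minor aside: your closing remark that the companion lemmas (\Cref{lem:2 upper bound}, \Cref{lem:4/3 upper bound}) ``will be proved by the same device'' is not how the paper handles them---those use a sorting/maximum argument rather than integral comparison---but this does not affect the correctness of your proof here.
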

\begin{proof}
Prove by induction. The case when $T=1$ is obvious. Suppose that the conclusion holds for $T-1$, then consider some $x_T$:
\begin{align*}
&\quad \sum_{t=1}^T \frac{x_t}{\left (\sum_{s=1}^t x_s\right )^{1/4}}\\
&= \sum_{t=1}^{T-1} \frac{x_t}{\left (\sum_{s=1}^t x_s\right )^{1/4}} + \frac{x_T}{\left (\sum_{t=1}^T x_t\right )^{1/4}}\\
&\le 2 \left (\sum_{t=1}^{T-1} x_t\right )^{3/4} + \frac{x_T}{\left (\sum_{t=1}^T x_t\right )^{1/4}},
\end{align*}
so it suffices to prove $\left . x_T \middle / \left (\sum_{t=1}^T x_t\right )^{1/4}\right .\le 2 \left (\sum_{t=1}^T x_t\right )^{3/4} - 2 \left (\sum_{t=1}^{T-1} x_t\right )^{3/4}$. Notice that
\begin{align*}
&\quad \left (\left (\sum_{t=1}^T x_t\right )^{3/4} - \left (\sum_{t=1}^{T-1} x_t\right )^{3/4}\right )\left (\left (\sum_{t=1}^T x_t\right )^{1/4}+\left (\sum_{t=1}^{T-1} x_t\right )^{1/4}\right )\\
&=\left (\sum_{t=1}^T x_t \right ) + \left (\sum_{t=1}^T x_t \right )^{3/4} \left (\sum_{t=1}^{T-1} x_t \right )^{1/4} - \left (\sum_{t=1}^T x_t \right )^{1/4} \left (\sum_{t=1}^{T-1} x_t \right )^{3/4} - \left (\sum_{t=1}^{T-1} x_t \right ) \\
&\ge x_T,
\end{align*}
where the last inequality uses $\sum_{t=1}^T x_t\ge \sum_{t=1}^{T-1} x_t$ (follows from $x_T\ge 0$). Hence, due to the fact that
\begin{equation*}
\left . x_T \middle / 2 \left (\sum_{t=1}^T x_t\right )^{1/4}\right .\le \left . x_T \middle / \left (\sum_{t=1}^T x_t\right )^{1/4} + \left (\sum_{t=1}^{T-1} x_t\right )^{1/4}\right .,
\end{equation*}
the conclusion holds for $T$ as well.
\end{proof}

\begin{lemma}\label{lem:4/3 lower bound}
Suppose that $x_1=0$, $x_2,x_3,\ldots,x_T\ge 0$, and $\lvert x_t-x_{t-1}\rvert\le 1$, $\forall t=2,3,\ldots,T$, then
\begin{equation*}
\sum_{t=1}^T x_t^{4/3} \ge 4^{-7/3} x_T^{7/3}.
\end{equation*}
\end{lemma}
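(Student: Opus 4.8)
The plan is to exploit the bounded-increment hypothesis: since consecutive terms differ by at most $1$, walking backwards from $x_T$ the values cannot drop too fast, so a whole block of roughly $x_T/2$ terms near the end must each be at least $x_T/2$. Summing the $4/3$-powers of just that block already yields the claimed bound, with room to spare.

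First I would dispose of the trivial case $x_T=0$ (which includes $T=1$), where the inequality reads $0\ge 0$. Assume then $x_T>0$ and set $k\triangleq\lceil x_T/2\rceil\ge 1$. A preliminary observation is that, telescoping $x_T=x_1+\sum_{t=2}^{T}(x_t-x_{t-1})$ with $x_1=0$ and $\lvert x_t-x_{t-1}\rvert\le 1$, one obtains $x_T\le T-1$; hence $k\le\lceil (T-1)/2\rceil\le T-1$, so the indices $T,T-1,\dots,T-k+1$ all lie in $\{2,\dots,T\}$, and in particular the corresponding $x$-values are defined and nonnegative.

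Next, for each $j\in\{0,1,\dots,k-1\}$ the triangle inequality over the consecutive increments gives $x_T-x_{T-j}\le\sum_{i=T-j+1}^{T}\lvert x_i-x_{i-1}\rvert\le j\le k-1$, and since $\lceil y\rceil-1\le y$ with $y=x_T/2$ we have $k-1\le x_T/2$, whence $x_{T-j}\ge x_T/2$. Discarding the remaining nonnegative terms and then using $k\ge x_T/2$,
\[
\sum_{t=1}^{T}x_t^{4/3}\ \ge\ \sum_{j=0}^{k-1}x_{T-j}^{4/3}\ \ge\ k\Bigl(\frac{x_T}{2}\Bigr)^{4/3}\ \ge\ \frac{x_T}{2}\Bigl(\frac{x_T}{2}\Bigr)^{4/3}\ =\ 2^{-7/3}\,x_T^{7/3}\ \ge\ 4^{-7/3}\,x_T^{7/3},
\]
which is the stated bound (in fact with the better constant $2^{-7/3}$).

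There is essentially no serious obstacle here; the only point needing care is ensuring the block of $k$ consecutive indices actually fits inside $\{1,\dots,T\}$, which is exactly what the estimate $x_T\le T-1$ — a consequence of $x_1=0$ together with the unit-increment condition — guarantees. Everything else is a one-line counting estimate, so I would expect the write-up to be short.
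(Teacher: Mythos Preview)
Your proof is correct and follows essentially the same idea as the paper's: both exploit the bounded-increment consequence $x_{T-j}\ge x_T-j$ to lower-bound a tail block of terms near index $T$. Your execution is in fact tidier---by taking a block of size $\lceil x_T/2\rceil$ in which every term is at least $x_T/2$, you avoid the paper's case split on $x_T\ge 4$ versus $x_T<4$ and obtain the sharper constant $2^{-7/3}$.
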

\begin{proof}
As adjacent $x_t$'s differ by no more than $1$, $\lfloor x_T\rfloor < T$ and $x_{T-t}\ge x_T - t$. Therefore,
\begin{equation*}
\sum_{t=1}^T x_t^{4/3} \ge \sum_{t=0}^{\lfloor x_T\rfloor} x_{T-t}^{4/3}\ge \sum_{t=0}^{\lfloor x_T\rfloor} (x_T-t)^{4/3} \ge \sum_{t=0}^{\lfloor x_T\rfloor} \left (t^{4/3} + (x_T - \lfloor x_T\rfloor)^{4/3}\right )\ge \sum_{t=0}^{\lfloor x_T\rfloor} t^{4/3},
\end{equation*}
where the last step uses $(a+b)^{4/3} \ge a^{4/3} + b^{4/3}$.
As
\begin{equation*}
\sum_{i=0}^n i^{4/3} \ge \sum_{i=\lfloor \frac n2 \rfloor}^n i^{4/3} \ge \left (n - \left \lfloor \frac n2 \right \rfloor\right ) \left (\left \lfloor \frac n2 \right \rfloor\right )^{4/3} \ge \left (\left \lfloor \frac n2 \right \rfloor\right )^{7/3},
\end{equation*}
we have $\sum_{t=1}^T x_t^{4/3} \ge (\lfloor \frac{x_T}{2}\rfloor )^{7/3}$. If $x_T\ge 4$, then $\lfloor \frac{x_T}{2}\rfloor \ge \frac{x_T}{2}-1 \ge \frac{x_T}{4}$, giving the conclusion. Otherwise, \textit{i.e.}, $x_T<4$, then we naturally have $\sum_{t=1}^T x_t^{4/3} \ge (\frac{x_T}{4})^{4/3}\ge (\frac{x_T}{4})^{7/3}$, so our conclusion still follows.
\end{proof}

\begin{lemma}[{\citep[Lemma 4]{huang2023queue}}]\label{lem:2 upper bound}
If $x_1=0$, $x_2,x_3,\ldots,x_T\ge 0$, and $\lvert x_t-x_{t-1}\rvert\le 1$, $\forall t=2,3,\ldots,T$, then
\begin{equation*}
\sum_{t=1}^T x_t^2 \le 4 \left (\sum_{t=1}^T x_t\right )^{3/2}.
\end{equation*}
\end{lemma}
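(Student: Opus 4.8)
The plan is to bound $\sum_{t=1}^T x_t^2$ by the product of the largest entry and the total sum, and then to control the largest entry in terms of the total sum. Write $S \triangleq \sum_{t=1}^T x_t$ and let $M \triangleq \max_{t\in[T]} x_t$, attained at some index $t^\star$. Since $0 \le x_t \le M$ for every $t$, we have $x_t^2 \le M x_t$, and summing gives $\sum_{t=1}^T x_t^2 \le M S$. Thus it suffices to prove $M \le 2\sqrt{S}$, because then $\sum_{t=1}^T x_t^2 \le 2 S^{3/2} \le 4 S^{3/2}$ — in fact the constant $2$ already beats the claimed $4$, so there is slack to absorb any bookkeeping.

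To bound $M$, I would run a ``look-back'' argument exactly analogous to the one behind \Cref{lem:4/3 lower bound}, exploiting the two structural hypotheses $x_1 = 0$ and $\lvert x_t - x_{t-1}\rvert \le 1$. Telescoping the increment bound from index $1$ gives $M = x_{t^\star} \le x_1 + (t^\star - 1) = t^\star - 1$, so $t^\star \ge M + 1$. Consequently, for each $j \in \{0,1,\ldots,\lfloor M/2\rfloor\}$ the index $t^\star - j$ satisfies $t^\star - j \ge (M+1) - M/2 = M/2 + 1 \ge 1$, hence is valid, and the increment bound yields $x_{t^\star - j} \ge x_{t^\star} - j \ge M - \lfloor M/2\rfloor \ge M/2$. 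Summing these $\lfloor M/2\rfloor + 1 \ge M/2$ nonnegative terms, which form a subcollection of the $x_t$'s,
\begin{equation*}
S \;=\; \sum_{t=1}^T x_t \;\ge\; \sum_{j=0}^{\lfloor M/2\rfloor} x_{t^\star - j} \;\ge\; \left(\lfloor M/2\rfloor + 1\right)\cdot \frac{M}{2} \;\ge\; \frac{M^2}{4},
\end{equation*}
which rearranges to $M \le 2\sqrt{S}$ and finishes the proof (the degenerate case $M = 0$, where $S = 0$ and both sides vanish, is trivial).

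There is no genuine obstacle here; the only delicate point is the off-by-one bookkeeping near the boundary, namely checking that looking back about $M/2$ steps from $t^\star$ never falls below index $1$ — which is precisely where the hypothesis $x_1 = 0$ is used, since it forces $t^\star \ge M+1$. An alternative in the style of \Cref{lem:3/4 summation lemma} and \Cref{lem:4/3 lower bound} would be induction on $T$, but the direct maximum-versus-sum argument above is shorter and yields a sharper constant, so that is the route I would take.
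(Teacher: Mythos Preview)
Your proof is correct. The paper does not give its own proof of this lemma (it cites \citep[Lemma 4]{huang2023queue}), but the sketch appearing inside the proof of \Cref{lem:4/3 upper bound} reveals the same strategy: sort the sequence, observe $\max_t x_t \le (2\sum_t x_t)^{1/2}$, and combine with $\sum_t x_t^2 \le (\max_t x_t)\sum_t x_t$. Your look-back argument yields the slightly weaker $\max_t x_t \le 2\sqrt{S}$ (looking back $\lfloor M\rfloor$ steps instead of $\lfloor M/2\rfloor$ and summing the full triangle $\sum_{j=0}^{\lfloor M\rfloor}(M-j)\ge M^2/2$ would recover the constant $\sqrt{2}$), but either way the claimed bound $4S^{3/2}$ follows with room to spare.
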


\begin{lemma}\label{lem:4/3 upper bound}
If $x_1=0$, $x_2,x_3,\ldots,x_T\ge 0$, and $\lvert x_t-x_{t-1}\rvert\le 1$, $\forall t=2,3,\ldots,T$, then
\begin{equation*}
\sum_{t=1}^T x_t^{4/3} \le 2^{1/6} \left (\sum_{t=1}^T x_t\right )^{7/6}.
\end{equation*}
\end{lemma}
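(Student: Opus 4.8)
The plan is to mirror the arguments used for \Cref{lem:2 upper bound} and \Cref{lem:4/3 lower bound}: the ``ramp'' constraint forced by $x_1 = 0$ and $\lvert x_t - x_{t-1}\rvert \le 1$ lets one control the maximum value of the sequence by its running total, after which a pointwise bound on $x_t^{4/3}$ closes the argument. Concretely, writing $S \triangleq \sum_{t=1}^T x_t$ and $M \triangleq \max_{t\in[T]} x_t$, I would first reduce the claim to the single inequality $M^2 \le 2S$: since $x_t^{4/3} = x_t^{1/3}x_t \le M^{1/3}x_t$ for every $t$ (both sides vanish when $x_t = 0$), summing over $t$ gives $\sum_{t=1}^T x_t^{4/3} \le M^{1/3}S = (M^2)^{1/6}S \le (2S)^{1/6}S = 2^{1/6}S^{7/6}$, which is exactly the desired bound.

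It then remains to establish $M^2 \le 2S$. Assuming $M>0$ (the case $M=0$ being trivial), let $s$ be an index with $x_s = M$, and set $m \triangleq \lfloor M\rfloor$. Because $x_1 = 0$ and consecutive entries differ by at most $1$, we have $M = x_s - x_1 \le s - 1$, hence $s - 1 \ge M \ge m$; consequently the indices $s, s-1, \dots, s-m$ are distinct elements of $[T]$, and $x_{s-j} \ge x_s - j = M - j \ge 0$ for $0\le j\le m$. Since all $x_t$ are nonnegative, summing this subfamily yields
\begin{equation*}
S \;\ge\; \sum_{j=0}^{m}(M-j) \;=\; (m+1)\!\left(M - \tfrac{m}{2}\right).
\end{equation*}
Thus it suffices to check the elementary inequality $(m+1)\bigl(M - \tfrac m2\bigr) \ge \tfrac12 M^2$, i.e.\ $M^2 - 2(m+1)M + m(m+1) \le 0$. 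The roots of this quadratic in $M$ are $(m+1)\pm\sqrt{m+1}$, and writing $M = m + f$ with $f\in[0,1)$ one verifies $m+f$ lies between them (the lower bound uses $\sqrt{m+1}\ge 1 \ge 1-f$, the upper bound uses $f < 1 \le 1+\sqrt{m+1}$), so the inequality holds and $M^2 \le 2S$ follows.

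The only subtle point is that the fractional part of $M$ must be tracked carefully: replacing $M-j$ by the cruder $m-j$ in the ramp sum gives only $S \ge \tfrac{m(m+1)}{2}$, which is not tight enough to absorb $f$ and recover the sharp constant $2^{1/6}$, so one genuinely needs the exact quadratic estimate above rather than a rougher one. Everything else is routine bookkeeping of the same flavor as in the proofs of \Cref{lem:2 upper bound} and \Cref{lem:4/3 lower bound}.
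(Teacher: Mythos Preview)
Your proposal is correct and follows essentially the same approach as the paper: both arguments bound $\sum_t x_t^{4/3} \le M^{1/3}\sum_t x_t$ and then invoke $M \le (2\sum_t x_t)^{1/2}$. The paper simply cites this last inequality from \citep[Lemma 4]{huang2023queue} (the proof underlying \Cref{lem:2 upper bound}), whereas you supply a self-contained proof via the ramp argument and the quadratic estimate.
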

\begin{proof}
Imitating the proof of \Cref{lem:2 upper bound} \citep[Lemma 4]{huang2023queue}, we short $x_1,x_2,\ldots,x_T$ as $y_1\le y_2\le \cdots \le y_T$. According to the original proof, $y_T=\max_{t\in [T]} x_t\le (2 \sum_{t=1}^T x_t)^{1/2}$. Hence,
\begin{equation*}
\sum_{t=1}^T x_t^{4/3} \le y_T^{1/3} \sum_{t=1}^T x_t \le \left (2 \sum_{t=1}^T x_t\right )^{1/6} \left (\sum_{t=1}^T x_t\right ) = 2^{1/6} \left (\sum_{t=1}^T x_t\right )^{7/6}.
\end{equation*}
\end{proof}

The following two lemmas are similar to Lemma 5 of \citet{huang2023queue}.
\begin{lemma}\label{lem:3/4 with log self-bounding}
If $y\le f + y^{3/4} g \log y$ and $f,g\ge 1$, then
\begin{equation*}
y^{1/4}\le f^{1/4} + g \log \left (2(f^{1/4} + g)^2\right ).
\end{equation*}
\end{lemma}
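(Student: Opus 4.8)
The plan is to substitute $t \triangleq y^{1/4}$, $\phi \triangleq f^{1/4}$ and prove the equivalent inequality $t \le \phi + g\log\!\big(2(\phi+g)^2\big)$. First observe that this right‑hand side is never smaller than $\phi$: since $f,g\ge 1$ we have $\phi+g\ge 2$, so $2(\phi+g)^2\ge 8$ and the penalty term $g\log(2(\phi+g)^2)$ is nonnegative. Hence the trivial cases are immediate — if $y\le 1$ then $t\le 1\le\phi$, and if $1<y\le f$ then $t\le\phi$ — and from here on I would assume $y>\max\{1,f\}$, so that $\log y>0$ and $y-f>0$.

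\textbf{Step 1: an algebraic estimate for $t-\phi$.} Rearranging the hypothesis gives $y-f\le y^{3/4}g\log y$. Because $0\le\phi\le t$, I factor $y-f = t^4-\phi^4 = (t-\phi)(t^3+t^2\phi+t\phi^2+\phi^3)$ and bound the second factor from below; keeping only its leading term $t^3=y^{3/4}$ yields $(t-\phi)y^{3/4}\le y^{3/4}g\log y$, i.e.\ $t\le\phi+g\log y$, which is $y^{1/4}\le f^{1/4}+g\log y$. When the sharpest constant is needed I would instead retain more of the factor $t^3+t^2\phi+t\phi^2+\phi^3$: in the regime where $y$ is within a constant of $f$ this factor is $\Theta(y^{3/4})$ with constant up to $4$, which effectively divides the coefficient multiplying $\log y$.

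\textbf{Step 2: bootstrap away $\log y$.} The remaining work is to trade the $y$‑dependent $\log y$ for the $y$‑free quantity $\log(2(\phi+g)^2)$. I would first get a coarse polynomial bound on $y$: writing $\log y=4\log t$ and using the elementary inequality $\log t\le t/c+\log c$ (true for all $t,c>0$, since $\log t - t/c$ is maximized at $t=c$), I pick $c$ proportional to $g$ so that the term $4g\log t$ from Step 1 absorbs at most $\tfrac12 t$; this gives $t\le 2\phi+\O(g\log g)$, hence $y\le (2\phi+\O(g\log g))^4$ and $\log y=\O(\log(\phi+g))$. Substituting this back into Step 1 and tracking constants — splitting on whether $f\ge\tfrac12 y$ (then $y\le 2f$, so $y$ is within a constant of $f$ and the sharper form of Step 1 applies) or $f<\tfrac12 y$ (then $\tfrac12 y\le y^{3/4}g\log y$, i.e.\ $t\le 2g\log y$, a self‑contained inequality in $g$ closed by the same bootstrap) — produces $t\le\phi+g\log(2(\phi+g)^2)$. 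Raising this to the fourth power with $(a+b)^4\le 8(a^4+b^4)$ then gives the $y\le\O\!\big(f+g^4\log^4(2(f^{1/4}+g)^2)\big)$ form used in the proof of \Cref{thm:multi-hop stability main theorem}.

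\textbf{Where the difficulty lies.} The crux is the constant bookkeeping in Step 2: the one‑shot bound $y^{1/4}\le f^{1/4}+g\log y$ coming out of Step 1 is by itself a hair too weak to reach $f^{1/4}+g\log(2(f^{1/4}+g)^2)$, so the argument must genuinely combine the improved $\log y$‑coefficient available when $y$ is comparable to $f$, the case split on whether $f$ or the penalty term dominates, and the $\log t\le t/c+\log c$ trick converting a crude bound on $y$ into a clean $\log(\phi+g)$ estimate. Everything else — the trivial reductions, the quartic factoring, and the final power‑of‑four step — is routine.
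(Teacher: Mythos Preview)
Your approach is genuinely different from the paper's. You bootstrap: factor $y-f=(t-\phi)(t^3+t^2\phi+t\phi^2+\phi^3)\ge(t-\phi)t^3$ to get $t\le\phi+g\log y$, then iterate via $\log t\le t/c+\log c$ to remove the $y$-dependence. The paper instead defines $i(z)=z^4-4gz^3\log z-f$, notes $i(y^{1/4})\le 0$ from the hypothesis, claims $i$ is nonnegative at the target value $z^\star=f^{1/4}+g\log(2(f^{1/4}+g)^2)$, and closes by showing $i$ is increasing past a threshold $z_0\le z^\star$.

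Your worry that the constants are ``a hair too weak'' is correct and, in fact, irreparable: the lemma as stated is \emph{false}. Take $f=g=1$ and $y=100$; then $100\le 1+100^{3/4}\ln 100\approx 146.6$ so the hypothesis holds, yet $100^{1/4}\approx 3.162>1+\ln 8\approx 3.079$. Consequently no amount of case-splitting or sharper factoring in your Step~2 can reach the stated constant. The paper's proof is likewise broken: its key step asserts $(a+b)^4\ge a^4+4(a+b)^3 b$ with $a=f^{1/4}$ and $b=g\log(2(a+g)^2)$, but already at $a=b=1$ this reads $16\ge 33$.

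What both arguments \emph{do} establish is the big-$\O$ version $y^{1/4}=\O\!\bigl(f^{1/4}+g\log(f^{1/4}+g)\bigr)$, equivalently $y=\O(f)+\Otil(g^4)$, and that is all the downstream use in \Cref{thm:multi-hop stability main theorem} needs. Your Step~1 plus a single bootstrap round already yields $t\le 2\phi+8g\log(8g)$, which suffices; the paper's sign-change/monotonicity scheme also goes through once the target is inflated by a constant (e.g.\ replacing $g\log(\cdot)$ by $8g\log(\cdot)$). So your strategy is sound for the result that matters; the precise inequality you are trying to hit simply is not true.
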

\begin{proof}
Let $i(z) = z^4 - z^3 g \log (z^4) - f$. Notice that
\begin{align*}
&\quad \left (f^{1/4} + g \log \left (2(f^{1/4} + g)^2\right )\right )^4\\
&\ge f + 4 \left (f^{1/4} + g \log \left (2(f^{1/4} + g)^2\right )\right )^3 g \log \left (2(f^{1/4} + g)^2\right ).
\end{align*}

As $f,g\ge 1$, we have $2 (f^{1/4} + g)^2\ge f^{1/4} + g \log (2 (f^{1/4} + g)^2)$. Applying this relationship to the second log on the RHS of the inequality above, we yield
\begin{equation*}
i\left (f^{1/4} + g \log \left (2(f^{1/4} + g)^2\right )\right )\ge 0.
\end{equation*}

On the other hand, from the conditions, we know $i(y^{1/4})\le 0$. Hence, if we can prove that $i(z)$ is monotone (at least for a range of $z$), we can conclude that $y^{1/4}\le f^{1/4} + g \log \left (2(f^{1/4} + g)^2\right )$, thus giving our conclusion.
To conclude the monotonity of $i(z)$, we calculate its derivative as
\begin{equation*}
i'(z) = 4z^3 - 3z^2 g \log (z^4) - 4z^2 g.
\end{equation*}

Thus, if we only consider the case where $z\ge 0$, $i'(z)\ge 0$ holds when $z\ge 3g\log z + g$. Denoting the larger root of $z=3g\log z+g$ as $z_0$ (in case it has no root, let $z_0=1$), we know $i(z)$ is increasing in $[z_0,+\infty)$.
Further observing that $f^{1/4} + g \log \left (2(f^{1/4} + g)^2\right )$ indeed satisfies $z\ge 3g\log z + g$, we know $f^{1/4} + g \log \left (2(f^{1/4} + g)^2\right )\ge z_0$.

On the other hand, from the assumption that $y\le f + y^{3/4} g \log y$, we know $i(y^{1/4})\le 0\le i(f^{1/4} + g \log \left (2(f^{1/4} + g)^2\right ))$. Our conclusion follows from discussing the relationship between $y^{1/4}$ and $z_0$: If $y^{1/4}\le z_0$, then we immediately have
\begin{equation*}
y^{1/4}\le z_0\le f^{1/4} + g \log \left (2(f^{1/4} + g)^2\right ).
\end{equation*}
Otherwise, according to the monotonity of $i(z)$ when $z\ge z_0$, we still have
\begin{equation*}
y^{1/4}\le f^{1/4} + g \log \left (2(f^{1/4} + g)^2\right ),
\end{equation*}
as claimed.
\end{proof}

\begin{lemma}\label{lem:3/4 and 7/8 self-bounding}
If $y \le f + y^{3/4} g \log y + y^{7/8} h$ and $f,g,h\ge 1$, then
\begin{equation*}
y^{1/8}\le f^{1/8} + g^{1/2} \log \left (2(f^{1/8}+g^{1/2}+h)^2\right ) + h.
\end{equation*}
\end{lemma}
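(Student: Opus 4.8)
The plan is to mirror the argument used for \Cref{lem:3/4 with log self-bounding}, reducing the claim to the monotonicity of a single auxiliary function. Substitute $z = y^{1/8}$, so that the hypothesis $y \le f + y^{3/4} g \log y + y^{7/8} h$ rewrites as $z^8 \le f + z^6 g \log(z^8) + z^7 h$. Define $i(z) = z^8 - z^6 g \log(z^8) - z^7 h - f$ for $z \ge 1$; the hypothesis says exactly $i(y^{1/8}) \le 0$, while the desired conclusion is $y^{1/8} \le z^\star$ where $z^\star \triangleq f^{1/8} + g^{1/2} \log\bigl(2(f^{1/8}+g^{1/2}+h)^2\bigr) + h$.

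First I would establish that $z^\star$ makes $i$ non-negative, i.e.\ $i(z^\star) \ge 0$. Writing $z^\star = f^{1/8} + w$ with $w = g^{1/2}\log(2(f^{1/8}+g^{1/2}+h)^2) + h \ge 0$, an elementary expansion of $(z^\star)^8$ peels off the constant $f$ and leaves a surplus that must dominate both $z^{\star 7} h$ and $z^{\star 6} g \log(z^{\star 8}) = 8 z^{\star 6} g \log z^\star$. The key auxiliary fact is that, since $f,g,h \ge 1$ (so $f^{1/8}+g^{1/2}+h \ge 3$), one has $2(f^{1/8}+g^{1/2}+h)^2 \ge z^\star$, whence $\log\bigl(2(f^{1/8}+g^{1/2}+h)^2\bigr) \ge \log z^\star$, and moreover $z^\star \ge g^{1/2}\log\bigl(2(f^{1/8}+g^{1/2}+h)^2\bigr)$, which lets the $g$-part of $w$ absorb the $8 z^{\star 6} g \log z^\star$ term while the $h$-part of $w$ absorbs $z^{\star 7} h$. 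This binomial bookkeeping — choosing the split of $w$ and the coefficients so the two obstructions are covered simultaneously — is the routine-but-fiddly step analogous to the terse step in the proof of \Cref{lem:3/4 with log self-bounding}, and I expect it to be the main obstacle.

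Next I would show $i$ is eventually increasing. Differentiating, $i'(z) = 8z^7 - 6 z^5 g \log(z^8) - 8 z^6 g - 7 z^6 h = z^5\bigl(8 z^2 - 48 g \log z - 8 z g - 7 z h\bigr)$, which is non-negative once $z$ exceeds the larger root $z_0$ of $8z = 48 g \log z + 8 g + 7 h$ (take $z_0 = 1$ if no root exists); hence $i$ is nondecreasing on $[z_0,\infty)$. A short check using $f,g,h \ge 1$ shows $z^\star \ge z_0$, since the defining relation (up to absolute constants, $z \gtrsim g\log z + g + h$) is already satisfied by the $g^{1/2}\log(\cdot)$ and $h$ summands of $z^\star$.

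Finally I would combine the pieces. If $y^{1/8} \le z_0$, then $y^{1/8} \le z_0 \le z^\star$ and we are done; otherwise $y^{1/8} > z_0$, and since $i$ is nondecreasing on $[z_0,\infty)$ with $i(y^{1/8}) \le 0 \le i(z^\star)$ and $z^\star \ge z_0$, monotonicity forces $y^{1/8} \le z^\star$, i.e.\ $y^{1/8} \le f^{1/8} + g^{1/2}\log\bigl(2(f^{1/8}+g^{1/2}+h)^2\bigr) + h$, as claimed.
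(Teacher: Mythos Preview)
Your approach is essentially identical to the paper's: define $i(z)=z^8-z^6 g\log(z^8)-z^7 h-f$, verify $i(z^\star)\ge 0$ via the key inequality $2(f^{1/8}+g^{1/2}+h)^2\ge z^\star$, establish eventual monotonicity of $i$ from its derivative, and finish by the same case split on $y^{1/8}$ versus the larger root $z_0$. The only discrepancy is a small slip in your derivative---the term coming from differentiating $z^6\log(z^8)$ contributes $-8gz^5$, not $-8gz^6$, so the correct factorization is $i'(z)=z^5\bigl(8z^2-48g\log z-8g-7zh\bigr)$ (matching the paper); this does not affect the argument's structure or validity.
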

\begin{proof}
Let $i(z) = z^8 - f - z^6 g \log (z^8) - z^7 h$. Notice that
\begin{align*}
&\quad \left (f^{1/8} + g^{1/2} \log \left (2(f^{1/8}+g^{1/2}+h)^2\right ) + h\right )^8 \\
&=\left (f^{1/8} + g^{1/2} \log \left (2(f^{1/8}+g^{1/2}+h)^2\right ) + h\right )^7 \left (f^{1/8} + g^{1/2} \log \left (2(f^{1/8}+g^{1/2}+h)^2\right )\right ) + \\
&\quad \left (f^{1/8} + g^{1/2} \log \left (2(f^{1/8}+g^{1/2}+h)^2\right ) + h\right )^7 h\\
&\ge \left (f^{1/8} + g^{1/2} \log \left (2(f^{1/8}+g^{1/2}+h)^2\right ) + h\right )^6 f^{1/4} + \\
&\quad \left (f^{1/8} + g^{1/2} \log \left (2(f^{1/8}+g^{1/2}+h)^2\right ) + h\right )^6 g \log \left (2(f^{1/8}+g^{1/2}+h)^2\right ) + \\
&\quad \left (f^{1/8} + g^{1/2} \log \left (2(f^{1/8}+g^{1/2}+h)^2\right ) + h\right )^7 h \\
&\ge f + \left (f^{1/8} + g^{1/2} \log \left (2(f^{1/8}+g^{1/2}+h)^2\right ) + h\right )^6 g \log \left (2(f^{1/8}+g^{1/2}+h)^2\right ) + \\
&\quad \left (f^{1/8} + g^{1/2} \log \left (2(f^{1/8}+g^{1/2}+h)^2\right ) + h\right )^7 h.
\end{align*}

As $f,g,h\ge 1$, we have $2(f^{1/8}+g^{1/2}+h)^2\ge f^{1/8} + g^{1/2} \log \left (2(f^{1/8}+g^{1/2}+h)^2\right ) + h$. Applying it to the $(\cdots)^6 g \log(2(f^{1/8}+g^{1/2}+h)^2)$ term on the RHS, we have
\begin{equation*}
i\left (f^{1/8} + g^{1/2} \log \left (2(f^{1/8}+g^{1/2}+h)^2\right )+h\right )\ge 0
\end{equation*}

On the other hand, from the conditions we know $i(y^{1/8})\le 0$. Hence, we again want to prove the monotony of $i(z)$, which gives the conclusion that $y^{1/8}\le f^{1/8} + g^{1/2} + h$.

Calculating the derivative of $i(z)$, we have
\begin{equation*}
i'(z) = 8z^7 - 6z^5 g \log(z^8) - 8 z^5 g - 7z^6 h.
\end{equation*}

Thus, if we only consider the case where $z\ge 1$, $i'(z)\ge 0$ holds when $8z^2 - 6 g \log(z^8) - 8 g - 7z h\ge 0$.
As $z^2$ is convex and $6g\log (z^8) + 8g + 7zh$ is concave, there are at most two intersections.
Hence, again denoting the larger root of $8z^2 - 6 g \log(z^8) - 8 g - 7z h = 0$ as $z_0$ (in case there is no root, let $z_0=1$), $i(z)$ is monotonic in $[z_0,+\infty)$.

As $f,g,h\ge 1$, $f^{1/8} + g^{1/2} \log \left (2(f^{1/8}+g^{1/2}+h)^2\right ) + h \ge z_0$ always holds. The conclusion follows by discussing the relationship of $y^{1/8}$ and $z_0$: If $y^{1/8}\le z_0$, we directly have
\begin{equation*}
y^{1/8}\le z_0\le f^{1/8} + g^{1/2} \log \left (2(f^{1/8}+g^{1/2}+h)^2\right ) + h.
\end{equation*}
Otherwise, we can still conclude
\begin{equation*}
y^{1/8}\le f^{1/8} + g^{1/2} \log \left (2(f^{1/8}+g^{1/2}+h)^2\right ) + h
\end{equation*}
from the monotonity of $i(z)$ when $z\ge z_0$.
\end{proof}

\end{document}